\theoremstyle{plain}
\newtheorem{pretheo}{Theorem}[section]
\newtheorem{preassu}[pretheo]{Assumption}
\newtheorem{precoro}[pretheo]{Corollary}
\newtheorem{predefi}[pretheo]{Definition}
\newtheorem{preexam}[pretheo]{Example}
\newtheorem{prelemm}[pretheo]{Lemma}
\newtheorem{preprop}[pretheo]{Proposition}
\newtheorem{prerema}[pretheo]{Remark}
\newenvironment{theo}{\begin{pretheo}}{\end{pretheo}}
\newenvironment{coro}{\begin{precoro}}{\end{precoro}}
\newenvironment{defi}{\begin{predefi}}{\end{predefi}}
\newenvironment{lemm}{\begin{prelemm}}{\end{prelemm}}
\newenvironment{prop}{\begin{preprop}}{\end{preprop}}
\newenvironment{rema}{\begin{prerema}\rm}{\end{prerema}}
\DeclareMathOperator{\di}{div}
\DeclareMathOperator{\Di}{Div}
\newcommand{\intd}{\,d}
\newcommand{\loc}{{\rm loc}}
\newcommand{\pa}{\partial}
\newcommand{\wh}[1]{\widehat{#1}}
\newcommand{\wt}[1]{\widetilde{#1}}
\newcommand{\Ba}{\mathbf{a}}
\newcommand{\Bb}{\mathbf{b}}
\newcommand{\Be}{\mathbf{e}}
\newcommand{\Bf}{\mathbf{f}}
\newcommand{\Bg}{\mathbf{g}}
\newcommand{\Bh}{\mathbf{h}}
\newcommand{\Bn}{\mathbf{n}}
\newcommand{\Bu}{\mathbf{u}}
\newcommand{\Bv}{\mathbf{v}}
\newcommand{\By}{\mathbf{y}}
\newcommand{\BA}{\mathbf{A}}
\newcommand{\BB}{\mathbf{B}}
\newcommand{\BC}{\mathbf{C}}
\newcommand{\BD}{\mathbf{D}}
\newcommand{\BF}{\mathbf{F}}
\newcommand{\BG}{\mathbf{G}}
\newcommand{\BH}{\mathbf{H}}
\newcommand{\BI}{\mathbf{I}}
\newcommand{\BJ}{\mathbf{J}}
\newcommand{\BM}{\mathbf{M}}
\newcommand{\BN}{\mathbf{N}}
\newcommand{\BO}{\mathbf{O}}
\newcommand{\BR}{\mathbf{R}}
\newcommand{\BT}{\mathbf{T}}
\newcommand{\BU}{\mathbf{U}}
\newcommand{\BV}{\mathbf{V}}
\newcommand{\BW}{\mathbf{W}}
\newcommand{\BX}{\mathbf{X}}
\newcommand{\CA}{\mathcal{A}}
\newcommand{\CB}{\mathcal{B}}
\newcommand{\CF}{\mathcal{F}}
\newcommand{\CG}{\mathcal{G}}
\newcommand{\CL}{\mathcal{L}}
\newcommand{\CN}{\mathcal{N}}
\newcommand{\CV}{\mathcal{V}}
\newcommand{\CW}{\mathcal{W}}
\newcommand{\Fg}{\mathfrak{g}}
\newcommand{\Fp}{\mathfrak{p}}
\newcommand{\Fq}{\mathfrak{q}}
\newcommand{\SST}{\mathsf{T}}
\newcommand{\tws}{{\BR^{N-1}}}
\newcommand{\ws}{{\BR^N}}
\newcommand{\al}{\alpha}
\newcommand{\ga}{\gamma}
\newcommand{\de}{\delta}
\newcommand{\ep}{\varepsilon}
\newcommand{\te}{\theta}
\newcommand{\la}{\lambda}
\newcommand{\si}{\sigma}
\newcommand{\ph}{\varphi}
\newcommand{\om}{\omega}
\newcommand{\Ga}{\Gamma}
\newcommand{\De}{\Delta}
\newcommand{\Te}{\Theta}
\newcommand{\Si}{\Sigma}
\newcommand{\Om}{\Omega}
\numberwithin{equation}{section} 
\begin{document}

\title[The Navier-Stokes equations with a free surface]
{Global solvability of the Navier-Stokes equations
with a free surface in the maximal $L_p\text{-}L_q$ regularity class}

\author[Hirokazu Saito]{Hirokazu Saito}

\address{Department of Mathematics, Faculty of Science and Engineering, 
Waseda University, Okubo 3-4-1, Shinjuku-ku, Tokyo 169-8555, Japan}

\email{hsaito@aoni.waseda.jp}

%\author[]{}
%\address{}
%\email{}

%\subjclass[2000]{Primary: 35Q30; Secondary: 76D05.}
\subjclass[2010]{Primary: 35Q30; Secondary: 76D05.}

\keywords{Global solvability, Navier-Stokes equations, Free surfaces,
Maximal regularity, $L_p\text{-}L_q$ framework, Exponential stability, Infinite layers.}

%\thanks{}

%\date{}

%\dedicatory{}

\begin{abstract}
We consider the motion of incompressible viscous fluids
bounded above by a free surface and below by a solid surface
in the $N$-dimensional Euclidean space for $N\geq 2$.
%when the gravity is not taken into account.
The aim of this paper is to show the global solvability
of the Naiver-Stokes equations with a free surface,
describing the above-mentioned motion,
in the maximal $L_p\text{-}L_q$ regularity class.
Our approach is based on the maximal $L_p\text{-}L_q$ regularity
with exponential stability for the linearized equations,
and also it is proved that solutions to the original nonlinear problem are exponentially stable.
\end{abstract}

\maketitle

\renewcommand{\thefootnote}{\arabic{footnote})}

%\tableofcontents

%%%%%%%%%%%%%%%%%%%%%%%%%%%%%%%%%%%%%%%%%%%%%%%%%%%%%%%%%%%%%%%%%%%%%%  
\section{Introduction}\label{sec:intro}
%Let us consider the motion of an incompressible and
%viscous fluid bounded above by a free surface and below by a solid surface
%in the $N$-dimensional Euclidean space for $N\geq 2$.
This paper is concerned with the global solvability of the Navier-Stokes equations with a free surface,
describing the motion of incompressible viscous fluids
bounded above by a free surface and below by a solid surface
in the $N$-dimensional Euclidean space for $N\geq 2$,
in the maximal $L_p\text{-}L_q$ regularity class (cf. \cite{Shibata15} for the class).
Such equations were mathematically treated by Beale \cite{Beale81} for the first time.
He proved, in an $L_2$-in-time and $L_2$-in-space setting with the gravity,
the local solvability for large initial data in \cite{Beale81},
whereas we prove in the maximal $L_p\text{-}L_q$ regularity class
the global solvability for small initial data
in the case where the gravity is not taken into account in the present paper.

The problem is stated as follows:
We are given an initial domain $\Om\subset\ws$,
occupied by an incompressible viscous fluid, such that
\begin{equation*}
\Om = \{\xi=(\xi',\xi_N) \mid \xi'=(\xi_1,\dots,\xi_{N-1})\in\tws,0<\xi_N<d\} \quad (d>0),
\end{equation*}
as well as an initial velocity field
$\Ba=\Ba(\xi)=(a_1(\xi),\dots,a_N(\xi))^\SST\footnote{$\BM^\SST$ denotes the transposed $\BM$.}$
of the fluid on $\Om$.
The symbols $\Ga$, $S$ denote the boundaries of $\Om$ such that
\begin{align*}
\Ga &= \{\xi=(\xi',\xi_N) \mid \xi'=(\xi_1,\dots,\xi_{N-1})\in\tws,\xi_N=d\}, \\
S&= \{\xi=(\xi',\xi_N) \mid \xi'=(\xi_1,\dots,\xi_{N-1})\in\tws,\xi_N=0\}.
\end{align*}
We wish to find for each $t\in(0,\infty)$
a transformation $\Te=\Te(\cdot,t):\Om\ni\xi\mapsto x=\Te(\xi,t)\in\ws$,
a velocity field $\Bv = \Bv(x,t)=(v_1(x,t),\dots,v_N(x,t))^\SST$ of the fluid,
and a pressure field $\pi = \pi(x,t)$ of the fluid so that
\begin{eqnarray}
&\pa_t\Te = \Bv\circ\Te, \quad \Te(\xi,0)=\xi,  \quad\text{$\xi\in\Om$,} \label{eq:2}  \\
%\quad\text{$\xi\in\overline{\Om}=\Om\cup\Ga\cup S$,} \label{eq:2}  \\
%&\Om(t) = \Te(\Om,t), \quad \Ga(t) = \Te(\Ga,t), \label{eq:1} \\
&\Om(t) = \Te(\Om,t), \quad \Ga(t) = \Te(\Ga,t), \quad S = \Te(S,t), \label{eq:1} \\
&\pa_t\Bv+(\Bv\cdot\nabla)\Bv = \Di\BT(\Bv,\pi), \quad \text{$x\in\Om(t)$,} \label{eq:3} \\
&\di\Bv = 0, \quad \text{$x\in \Om(t)$,} \label{eq:4} \\
&\BT(\Bv,\pi)\Bn = -\pi_0\Bn, \quad \text{$x\in\Ga(t)$,} \label{eq:5} \\
&\Bv = 0, \quad \text{$x\in S$,} \label{eq:6} \\
&\Bv|_{t=0} = \Ba, \quad \text{$\xi\in\Om$,} \label{eq:7}
\end{eqnarray}
where $\Bv\circ\Te = (\Bv\circ\Te)(\xi,t)=\Bv(\Te(\xi,t),t)$.

Here the density of the fluid have been set to $1$;
$\Bn$ is the unit outward normal to $\Ga(t)$;
the constant $\pi_0$ is the atmospheric pressure,
and it is assumed in this paper that $\pi_0=0$ without loss of generality.
The stress tensor $\BT(\Bv,\pi)$ is given by $\BT(\Bv,\pi)=\mu\BD(\Bv)-\pi\BI$,
where $\mu$ is a positive constant and denotes the viscosity coefficient of the fluid;
$\BI$ is the $N \times N$ identity matrix;
$\BD(\Bv) = \nabla\Bv+(\nabla\Bv)^\SST$ is the doubled strain tensor.
Here and subsequently, we use the following notation for differentiations:
Let $f=f(x)$, $\Bg=(g_1(x),\dots,g_N(x))^\SST$, and $\BM=(M_{i j}(x))$
be a scalar-, a vector-, and an $N\times N$ matrix-valued function on a domain of $\ws$,
respectively, and then for $\pa_j = \pa/\pa x_j$
\begin{align*}
&\nabla f = (\pa_1 f,\dots,\pa_N f)^\SST, \quad
\De f = \sum_{j=1}^N\pa_j^2 f, \quad
\De\Bg = (\De g_1,\dots,\De g_N)^\SST, \\
&\di\Bg = \sum_{j=1}^N\pa_j g_j, \quad
\nabla^2\Bg = \{\pa_i\pa_j g_k \mid i,j,k=1,\dots,N\}, \\
&\nabla\Bg =
\begin{pmatrix}
\pa_1 g_1 & \dots & \pa_N g_1 \\
\vdots & \ddots & \vdots \\
\pa_1 g_N & \dots & \pa_N g_N
\end{pmatrix}, \quad
(\Bg\cdot\nabla)\Bg = \left(\sum_{j=1}^N g_j\pa_j g_1,\dots,\sum_{j=1}^N g_j\pa_j g_N\right)^\SST, \\
&\Di\BM = \left(\sum_{j=1}^N\pa_j M_{1 j},\dots,\sum_{j=1}^N \pa_j M_{N j}\right)^\SST. 
\end{align*}
%It especially holds that
%$\Di\BT(\Bv,\pi) = \mu(\De\Bv+\nabla\di\Bv)-\nabla\pi$.

Let $\Bu(\xi,t)=(\Bv\circ\Te)(\xi,t)$, which is the so-called {\it Lagrangian velocity},
%and let us assume that \eqref{eq:2} is uniquely solvable.
and then the solution $\Te$ to \eqref{eq:2} is represented as
\begin{equation}\label{trans}
\Te(\xi,t)=\xi + \int_0^t\Bu(\xi,s)\intd s \quad
\text{($\xi\in\Om$, $t>0$).}
\end{equation}

We now write the equations \eqref{eq:3}-\eqref{eq:7} in the Lagrangian formulation.
Thus our unknowns will be the Lagrangian velocity $\Bu(\xi,t)=\Bv(\Te(\xi,t),t)$
and pressure $\Fp(\xi,t)=\pi(\Te(\xi,t),t)$ for $(\xi,t)\in\Om\times(0,\infty)$.
If we substitute the new unknowns $\Bu=\Bu(\xi,t)$
and $\Fp=\Fp(\xi,t)$ in \eqref{eq:3}-\eqref{eq:7}, then the equations
turn into\footnote{The derivation of \eqref{eq:11}-\eqref{eq:13} is discussed in the appendix.}
\begin{eqnarray}
\pa_t\Bu-\Di\BT(\Bu,\Fp) = \BF(\Bu) && \text{in $\Om$, $t>0$,} \label{eq:11} \\
\di\Bu = G(\Bu) = \di\BG(\Bu) && \text{in $\Om$, $t>0$,} \label{eq:12} \\
\BT(\Bu,\Fp)\Be_N = \BH(\Bu)\Be_N && \text{on $\Ga$, $t>0$,} \label{eq:13} \\
\Bu = 0 && \text{on $S$, $t>0$,} \label{eq:14} \\
\Bu|_{t=0} = \Ba && \text{in $\Om$,} \label{eq:15}
\end{eqnarray}
where $\Be_N=(0,\dots,0,1)^\SST$.
Here $\BF(\Bu)$, $G(\Bu)$, $\BG(\Bu)$, and $\BH(\Bu)$ are nonlinear terms,
with respect to $\Bu$, of the forms:
\begin{align}\label{nonlinear:1}
&\BF(\Bu) =
\BU_1\left(\int_0^t\nabla\Bu(\xi,s)\intd s\right)\pa_t\Bu
+ \BV\left(\int_0^t\nabla\Bu(\xi,s)\intd s\right)\nabla^2 \Bu \\
&\quad +\left[\BW\left(\int_0^t\nabla\Bu(\xi,s)\intd s\right)
\int_0^t\nabla^2\Bu(\xi,s)\intd s\right]\nabla\Bu, \notag \\
&G(\Bu) =
\BU_2\left(\int_0^t\nabla\Bu(\xi,s)\intd s\right):\nabla\Bu, \quad
\BG(\Bu) =
\BU_3\left(\int_0^t\nabla\Bu(\xi,s)\intd s\right)\Bu, \notag \\
&\BH(\Bu) =
\BD(\Bu)\BU_4\left(\int_0^t\nabla\Bu(\xi,s)\intd s\right)
+ (\nabla\Bu)^\SST\BU_5\left(\int_0^t\nabla\Bu(\xi,s)\intd s\right) \notag \\
&\quad +\BU_6\left(\int_0^t\nabla\Bu(\xi,s)\intd s\right)
(\nabla\Bu)^\SST\left(\BI+\BU_7\left(\int_0^t\nabla\Bu(\xi,s)\intd s\right)\right), \notag
\end{align}
where $\BU_i:\BR^{N\times N}\to \BR^{N\times N}$ $(i=1,\dots,7)$, 
\begin{align*}
\BV(\cdot)\nabla^2\Bu
=&\Bigg(\sum_{i,j,k=1}^NV_{ijk}^1(\cdot)\pa_i\pa_j u_k,\dots,\sum_{i,j,k=1}^N V_{ijk}^N(\cdot)\pa_i\pa_j u_k\Bigg)^\SST, \\
\left[\BW(\cdot)\int_0^t\nabla^2\Bu\intd s\right]\nabla\Bv
=&\Bigg(\sum_{i,j,k,l,m=1}^N W_{ijklm}^1(\cdot)\int_0^t\pa_i\pa_j u_k \intd s\,\pa_l v_m, \\
&\quad\dots,\sum_{i,j,k,l,m=1}^N W_{ijklm}^N(\cdot)\int_0^t\pa_i\pa_j u_k \intd s\,\pa_l v_m\Bigg)^\SST,
\end{align*} 
for some $V_{ijk}^1,\dots, V_{ijk}^N, W_{ijklm}^1,\dots, W_{ijklm}^N:\BR^{N\times N}\to\BR$
and for any $N$-vectors $\Bu=(u_1,\dots,u_N)^\SST$, $\Bv=(v_1,\dots,v_N)^\SST$.
Note that, for $N \times N$ matrices $\BA=(A_{i j})$, $\BB=(B_{i j})$, we have set
\begin{equation*}
\BA:\BB=\sum_{i,j=1}^N A_{i j}B_{i j}.
\end{equation*}
One has the following information about $\BU_i$, $\BV$, $\BW$:
Let $\BX=(X_{mn})$ be $N\times N$ matrices.
Then all the components of $\BU_i(\BX)$ $(i=1,\dots,7)$, $\BV(\BX)$, and $\BW(\BX)$
are polynomials with respect to $X_{mn}$ for $m,n=1,\dots,N$.
Furthermore,
\begin{equation}\label{null}
\BU_i(\BO)=\BO \quad (i=1,\dots,7),  \quad
%\BU_7(\BO)=\BI, \\
%&V_{jkl}^i(\BO)=0 \quad (i,j,k,l=1,\dots,N), \notag 
V_{ijk}^l(\BO) =0 \quad (i,j,k,l=1,\dots,N),  
%\notag %\\
%&\left[\BW(\BO)\int_0^t\nabla^2\Bu\intd s\right]\nabla\Bv 
%=\bigg(\bigg)
%\notag
%\left\{\begin{aligned}
%& 0 && (N\geq 3), \\
%&\mu\sum_{i,j,k=1}^N
%\de_{ij}(-1)^{k+i}\int_0^t \frac{\pa^2 u_{1+\de_{1i}}}{\pa\xi_j\pa\xi_{1+\de_{1k}}}(\xi,s)\intd s\frac{\pa}{\pa \xi_k}\Bv && (N=2),
%\end{aligned}\right.
%\notag
\end{equation}
where $\BO$ denotes the $N\times N$ zero matrix. % and also %for $i,j,k,l,m,n=1,\dots,N$
%\begin{equation*}
%W_{jklmn}^i(\BO)
%=
%\left\{\begin{aligned}
%&0 && (N\geq 3), \\
%&1
%\end{aligned}\right.
%\end{equation*}

Let us introduce historical remarks and key ideas of the present paper at this point.
If we consider free boundary problems,  % on a time-dependent domain,
then we first usually transform them to nonlinear problems on given domains,
independent of time $t$, by using a suitable transformation.
Roughly speaking, such transformations  are divided into

\begin{itemize}
\item Lagrangian transformation;
\item Eulerian approaches (e.g. Beale's transformation in \cite{Beale83}, \cite{BN85}; Hanzawa's transformation in \cite{Hanzawa81}).
\end{itemize}

{\it Lagrangian transformation} denotes the transformation $\Te$ of \eqref{trans}.
It is quite useful to show the local solvability for a lot of situations.
In fact, by using the Lagrangian transformation,
Shibata \cite{Shibata15} and Enomoto et al. \cite{EBS14} proved, respectively, the local solvability
of the Navier-Stokes eqautions with a free surface
for incompressible viscous fluids and for compressible viscous fluids 
in the case where the initial domain $\Om$ has uniform $W_q^{2-1/q}$ regularity (cf. their papers for the definition).
Here half-spaces, bent half-spaces, layer-like domains, cylinder-like domains, bounded domains,
and exterior domains are typical examples of uniform $W_q^{2-1/q}$ domains.
As for the local solvability with the Lagrangian transformation, we also refer e.g. to the following papers:
Solonnikov \cite{Solonnikov03, Solonnikov03b} and references therein,
Mogilevskii and Solonnikov \cite{MS92}, Mucha and Zaj\c{a}czkowski \cite{MZ00}, Shibata and Shimizu \cite{SS07}
for incompressible viscous fluids in bounded domains;
Beale \cite{Beale81}, Allain \cite{Allain87}, Tani \cite{Tani96}, Abels \cite{Abels05}
for incompressible viscous fluids in layer-like domains;
Tani \cite{Tani81}, Solonnikov and Tani \cite{ST90, ST91},
Secchi and Valli \cite{SV83}, Secchi \cite{Secchi90b, Secchi90, Secchi91},
Zaj\c{a}czkowski \cite{Zajaczkowski92, Zajaczkowski93, Zajaczkowski95},
Zadrzy\'nska and Zaj\c{a}czkowski \cite{ZZ94, ZZ98, ZZ00},
Str\"ohmer and Zaj\c{a}czkowski \cite{SZ99}, Denisova and Solonnikov \cite{DS02}
for compressible viscous fluids in bounded domains;
Tanaka and Tani \cite{TT03} for compressible viscous fluids in layer-like domains;
Tani \cite{Tani84}, 	Denisova \cite{Denisova94, Denisova97, Denisova00, Denisova01, Denisova03},
Denisova and Solonnikov \cite{DS95}
for the motion of two fluids separated by a closed free surface.

%
%
%% keep here %%
%For incompressible and bounded-domain cases,
%\begin{itemize}
%\item
%  Solonnikov \cite{%Solonnikov77, Solonnikov84, Solonnikov87, Solonnikov89b, Solonnikov89,
%  %Solonnikov91, Solonnikov93,
%  Solonnikov03, Solonnikov03b} and references therein
%\item
%  Mogilevskii-Solonnikov \cite{MS92}
%\item
%  Mucha-Zaj\c{a}czkowski \cite{MZ00}
%\item
%  Shibata-Shimizu \cite{SS07};
%\end{itemize}

%For compressible and bounded-domain cases,
%\begin{itemize}
%\item
%  Tani \cite{Tani81}, Solonnikov-Tani \cite{ST90, ST91}
%\item
%  Secchi-Valli \cite{SV83}, Secchi \cite{Secchi90b, Secchi90, Secchi91}
%\item
%  Zaj\c{a}czkowski \cite{Zajaczkowski92, Zajaczkowski93, Zajaczkowski95},
%  Zadrzy\'nska-Zaj\c{a}czkowski \cite{ZZ94, ZZ98, ZZ00}, Str\"ohmer-Zaj\c{a}czkowski \cite{SZ99}
%\item
%  Denisova-Solonnikov \cite{DS02};
%\end{itemize}

%For incompressible and layer-like-domain cases,
%\begin{itemize}
%\item
% Allain \cite{Allain87}, Tani \cite{Tani96}, and Abels \cite{Abels05};
%\end{itemize}

%For the motion of two fluids separated by a closed free surface,
%\begin{itemize}
%\item
% Tani \cite{Tani84}
%\item
% Denisova \cite{Denisova94, Denisova97, Denisova00, Denisova01, Denisova03}
% % Denisova15}
%\item
% Denisova-Solonnikov \cite{DS95}
%\end{itemize}

The advantage of the Lagrangian transformation for the local solvability is that
$\int_0^t\nabla\Bu(\xi,s)\intd s$ appears in nonlinear terms (cf. \eqref{nonlinear:1}).
By choosing $T>0$ small enough, we can see $\int_0^t\nabla\Bu(\xi,s)\intd s$ $(t\in(0,T))$ as a small coefficient,
and thus the nonlinear terms would be small with suitable norms.
This enables us to show the local solvability by using the contraction mapping theorem.

In order to prove the global solvability,
we need the integrability of $\nabla\Bu(\xi,t)$ with respect to time $t\in(0,\infty)$
because of $\int_0^t\nabla\Bu(\xi,s)\intd s$. 
If we are in an $L_p$-in-time and $L_q$-in-space setting,
then a key idea to guarantee the time integrability is exponential stability of solutions for the linearized equations
as was seen e.g. in Shibata-Shimizu \cite{SS08}, Shibata \cite{Shibata15}.
These two papers tell us that the exponential stability can be proved for bounded domains
by some abstract approach.
However, for unbounded domains containing $\Om$ of the present paper, it is not true in general that the exponential stability holds. 
This is one of main difficulties to prove the global solvability in our situation.

{\it Eulerian approaches} are useful to show the large-time behavior of solutions to
the Navier-Stokes equations with a free surface, % for incompressible viscous fluids,
because $\int_0^t \Bu(\xi,s)\intd s$ as above does not appear.
%when we transform problems on an unknown domain
%to one on a given domain independent of time $t$.
We here introduce e.g. the following papers as references of the Eulerian approaches:
Beale and Nishida \cite{BN85} (cf. also \cite{Hataya11} for the detailed proof), Hataya and Kawashima \cite{HK09}
proved polynomial decay of solutions for layer-like domains in an $L_2$-in-time and $L_2$-in-space setting;
K\"ohne, Pr\"uss, and Wilke \cite{KPW13}, Solonnikov \cite{Solonnikov14} proved exponential stability of solutions for bounded domains 
in an $L_p$-in-time and $L_p$-in-space setting;
Saito and Shibata \cite{SaS16} showed $L_q\text{-}L_r$ estimates of the Stokes semigroup
with surface tension and gravity on the half-space.
%The last paper due to Saito and Shibata gives us a new aspect of the maximal $L_p\text{-}L_q$ regularity that 
%different exponents $p$, $q$ are essential to prove the global solvability with polynomial decay of solutions in the maximal $L_p\text{-}L_q$ regularity class,
%although they are not essential in the case of exponential decay.
%although exponents $p$, $q$ with $p=q$ are enough to treat exponential decay of solutions in the maximal $L_p\text{-}L_q$ regularity class.

The key idea of this paper is to prove 
the maximal $L_p\text{-}L_q$ regularity with exponential stability 
for the linearized equations of \eqref{eq:11}-\eqref{eq:15}.
As mentioned above, we can prove such an exponential stability for bounded domains,
but the technique can not be applied to our situation because our domain $\Om$ is unbounded.
To overcome this difficulty,  we make use of Abels \cite{Abels06} and Saito \cite{Saito15b} in this paper.
%and the maximal $L_p\text{-}L_q$ regularity with exponential stability is proved in the present paper. 
We then apply the maximal $L_p\text{-}L_q$ regularity with exponential stability 
and the contraction mapping theorem to  \eqref{eq:11}-\eqref{eq:15}
in order to prove their global solvability. % and the exponential stability of solutions.

This paper is organized as follows:
The next section first introduce notation and function spaces
that are used throughout this paper.
Secondly, we state main results of the global solvability for \eqref{eq:11}-\eqref{eq:15}
and the maximal $L_p\text{-}L_q$ regularity with exponential stability for the linearized equations.
Section 3 introduces some results concerning
a time-shifted problem,  a variational problem, and the Helmholtz decomposition on $\Om$.
Section 4 shows the generation of an analytic $C_0$-semigroup %, which is called the {\it Stokes semigroup},
associated with the linearized equations.
In Section 5, we prove the maximal $L_p\text{-}L_q$ regularity with exponential stability
for the linearized equations by means of results introduced in Sections 3, 4.
Section 6 proves the global solvability of \eqref{eq:11}-\eqref{eq:15}
based on the contraction mapping theorem together with
the maximal $L_p\text{-}L_q$ regularity with exponential stability proved in Section 5.
In Section 7, we show the global existence and uniqueness of solutions for the original problem \eqref{eq:2}-\eqref{eq:7},
and also their exponential stability.

%%%%%%%%%%%%%%%%%%%%%%%%%%%%%%%%%%%%%%%%%%%%%%%%%%%%%%%%%%%%%%%%%%%%%%
\section{Notation and main results}\label{sec:main}
In this section, we first introduce notation and function spaces
that are used throughout this paper.
Next our main results are stated.

\subsection{Notation}
The set of all natural numbers, real numbers, and complex numbers are denoted by $\BN$, $\BR$, and $\BC$, respectively,
and $\BN_0 = \BN \cup \{0\}$.

Let $m,n\in\BN$ and $G$ be a domain of $\BR^n$.
We set $\Ba\cdot\Bb=\sum_{j=1}^m a_j b_j$
for $m$-vectors $\Ba=(a_1,\dots,a_m)^\SST$ and $\Bb=(b_1,\dots,b_m)^\SST$,
while we set $(\Bf,\Bg)_G=\int_G \Bf(x)\cdot\Bg(x) \intd x = \sum_{j=1}^m\int_G f_j(x)g_j(x) \intd x$
for $m$-vector functions $\Bf=\Bf(x)=(f_1(x),\dots,f_m(x))^\SST$, $\Bg=\Bg(x)=(g_1(x),\dots,g_m(x))^\SST$ on $G$.
In addition,  $C_0^\infty(G)$ denotes the set of all $C^\infty$-functions on $\BR^n$
whose supports are compact and contained in $G$.

Let $X$ and $Y$ be Banach spaces, and let $1\leq p \leq \infty$.
The Banach space of all bounded linear operators from $X$ to $Y$ is denoted by $\CL(X,Y)$, and $\CL(X)=\CL(X,X)$. 
$L_p(G,X)$ and $W_p^m(G,X)$ denote, respectively, the standard $X$-valued Lebesgue spaces on $G$
and the standard $X$-valued Sobolev spaces on $G$, and $W_p^0(G,X)=L_p(G,X)$.
If $X=\BR$ or $X=\BC$, then $L_p(G,X)$, $W_p^m(G,X)$, and $W_p^0(G,X)$ are denoted by
$L_p(G)$, $W_p^m(G)$, and $W_p^0(G)$, respectively, for short.

The symbol $C(G,X)$ stands for the set of all $X$-valued continuous function on $G$,
while $C^m(G,X)$ is the set of all $m$-times continuously differentiable functions on $G$ with values in $X$.
Let $BUC(G,X)$ be the Banach space of all $X$-valued uniformly continuous and bounded functions on $G$.
In addition, 
\begin{equation*}
BUC^m(G,X)=\{f\in C^m(G,X) \mid \pa^\al f\in BUC(G,X) \text{ for $|\al|=0,1,\dots,m$}\},
\end{equation*}
where we have set $\pa^\al f = \pa_1^{\al_1}\dots\pa_n^{\al_n}f$ and $|\al|=\al_1+\dots+\al_n$
for multi-index $\al=(\al_1,\dots,\al_n)\in\BN_0^n$.
%Let $BUC^0(G,X)=BUC(G,X)$ and for $l\in\BN_0$ 
%\begin{equation*}
%BUC^l(\overline{G},X) = \{f\in\} 
%\end{equation*}
Here $C(G)$, $C^m(G)$, $BUC(G)$, and $BUC^m(G)$ are defined similarly as above,
and also $G$ can be replaced by the closure $\overline{G}$ of $G$.

Let $\ga\in\BR$. We then define functions spaces with exponential weights as %follows:
\begin{align*}
L_{p,\ga}(\BR,X) &= \{f \in L_{p,\loc}(\BR,X) \mid e^{-\ga t}f(t) \in L_p(\BR,X)\}, \\
W_{p,\ga}^1(\BR,X) &= \{f \in W_{p,\loc}^1(\BR,X) \mid e^{-\ga t}\pa_t^k f(t)\in L_p(\BR,X) \text{ for $k=0,1$}\}.
\end{align*}
On the other hand, one sets for $\BR_+=(0,\infty)$
\begin{equation*}
{}_0 W_p^1(\BR_+,X) = \{f\in W_p^1(\BR_+,X) \mid f|_{t=0}=0\}.
\end{equation*}
%endowed with norms:
%\begin{align*}
%\|f\|_{L_{p,\ga}(\BR,X)}=\|e^{-\ga t}f\|_{L_p(\BR,X)}, \quad
%\|f\|_{W_{p,\ga}^1(\BR,X)}=\|e^{-\ga t}(\pa_t f,f)\|_{L_p(\BR,X)}.
%\end{align*}
%In addition, we set
%\begin{align*}
%&{}_0 L_{p,\ga}(\BR,X)=\{f\in L_{p,\ga}(\BR,X) \mid f(t)=0 \text{ for $t<0$}\}, \\
%&{}_0 W_{p,\ga}^1(\BR,X) =\{f\in W_{p,\ga}^1(\BR,X) \mid f(t)=0 \text{ for $t<0$}\}, \\
%&{}_0 W_{p,\ga}^1(\BR_+,X) =\{f\in W_{p,\ga}^1(\BR_+,X) \mid f|_{t=0}=0\}, 
%\end{align*}
%where $\BR_+=(0,\infty)$.

In order to define Bessel potential spaces of order $1/2$, 
we introduce the Fourier transform and
its inverse transform as follows:
Let $f=f(t)$ and $g=g(\tau)$ be functions defined on $\BR$, 
and then
\begin{align*}
\CF[f](\tau) = \int_{\BR}e^{-it\tau}f(t)\intd t, \quad
\CF_{\tau}^{-1}[g](t) = \frac{1}{2\pi}\int_{\BR}e^{it\tau}g(\tau)\intd\tau.
\end{align*} 
The Bessel potential spaces are given by
\begin{align*}
H_p^{1/2}(\BR,X) &= \{f\in L_p(\BR,X) \mid \|f\|_{H_p^{1/2}(\BR,X)}<\infty\}, \\
\|f\|_{H_p^{1/2}(\BR,X)} &=\|\CF_\tau^{-1}[(1+\tau^2)^{1/4}\CF[f](\tau)]\|_{L_p(\BR,X)}, \\
H_{p,\ga}^{1/2}(\BR,X) &= \{f\in L_{p,\ga}(\BR,X) \mid \|e^{-\ga t}f\|_{H_p^{1/2}(\BR,X)}<\infty\}, %\\
%\|g\|_{H_p^{1/2}(\BR,X)} &=\|\CF_\tau^{-1}[(1+\tau^2)^{1/4}\CF[g](\tau)]\|_{L_p(\BR,X)}, %\\
%\|f\|_{H_{p,\ga}^{1/2}(\BR,X)} &=\|e^{-\ga t}f\|_{H_p^{1/2}(\BR,X)}, 
%&{}_0H_p^{1/2}(\BR,X) = \{f\in H_p^{1/2}(\BR,X) \mid f(t)=0 \enskip \text{for $t<0$}\}.
\end{align*}
and furthermore,
\begin{align*}
H_p^{1/2}(\BR_+,X) &=[L_p(\BR_+,X), W_p^1(\BR_+,X)]_{1/2}, \\
{}_0 H_p^{1/2}(\BR_+,X) &= [L_p(\BR_+,X), {}_0 W_p^1(\BR_+,X)]_{1/2},
\end{align*}
%\begin{align*}
%&{}_0H_{p,\ga}^{1/2}(\BR,X) = \{f\in H_{p,\ga}^{1/2}(\BR,X) \mid f(t)=0 \enskip \text{for $t<0$}\}, \\
%&{}_0H_{p,\ga}^{1/2}(\BR_+,X)=[L_{p,\ga}(\BR_+,X), {}_0W_{p,\ga}^1(\BR_+,X)]_{1/2},
%\end{align*}
where $[\cdot,\cdot]_\te$ is the complex interpolation functor with $0<\te<1$.
For notational convenience, we set for $1<p,q<\infty$ and $Z\in\{H,{}_0 H\}$ 
\begin{align*}
H_{q,p,\ga}^{1,1/2}(\Om\times\BR) &= H_{p,\ga}^{1/2}(\BR,L_q(\Om)) \cap L_{p,\ga}(\BR,W_q^1(\Om)), \\
H_{q,p}^{1,1/2}(\Om\times\BR) &= H_{p}^{1/2}(\BR,L_q(\Om)) \cap L_{p}(\BR,W_q^1(\Om)), \\
Z_{q,p}^{1,1/2}(\Om\times \BR_+) &=  H_p^{1/2}(\BR_+,L_q(\Om))\cap L_p(\BR_+,W_q^1(\Om)),  \\
%{}_0 H_{q,p}^{1,1/2}(\Om\times \BR_+) &=  {}_0 H_p^{1/2}(\BR_+,L_q(\Om))\cap L_p(\BR_+,W_q^1(\Om)), \\
%&{}_0H_{q,p,\ga}^{1,1/2}(\Om\times\BR_+) = {}_0H_{p,\ga}^{1/2}(\BR_+,L_q(\Om)) \cap L_{p,\ga}(\BR_+,W_q^1(\Om)), \\
W_{q,p,\ga}^{2,1}(\Om\times \BR) &= W_{p,\ga}^1(\BR,L_q(\Om)) \cap L_{p,\ga}(\BR,W_q^2(\Om)), \\
W_{q,p}^{2,1}(\Om\times \BR_+) &= W_p^1(\BR_+,L_q(\Om)) \cap L_p(\BR_+,W_q^2(\Om)).
\end{align*}

Let $1<q<\infty$ and $q'=q/(q-1)$.
A closed subspace $W_{q,\Ga}^1(\Om)$ of $W_q^1(\Om)$ is defined as 
$W_{q,\Ga}^1(\Om) = \{f\in W_q^1(\Om) \mid f=0 \text{ on $\Ga$}\}$.
Then the solenoidal space $J_q(\Om)$ is given by 
\begin{equation*}
J_q(\Om) = \{\Bf \in L_q(\Om)^N \mid (\Bf,\nabla\ph)_\Om = 0
\text{ for all $\ph\in W_{q',\Ga}^1(\Om)$}\}.
\end{equation*}
Here we set $D_{q,p}(\Om)=(J_q(\Om),D(A_q))_{1-1/p,p}\footnote{
$D(A_q)$ is the domain of the Stokes operator $A_q$ associated with the linearized equations of \eqref{eq:11}-\eqref{eq:15}.
They are discussed in Section \ref{sec:sg} below in more detail, especially in \eqref{domain}.}$,
where $(\cdot,\cdot)_{\te,p}$ is the real interpolation functor with $0<\te<1$.

\begin{rema}\label{rema:domain}
The interpolation space $D_{q,p}(\Om)$ %is a closed subspace of $B_{q,p}^{2-2/p}(\Om)^N$
is characterized as follows\footnote{We refer e.g. to \cite[page 4133]{Shibata15}.}: %, \cite[Lemma 2.4]{Steiger06}, and \cite[Section 4.9]{Amann09}.}:
\begin{equation*}
D_{q,p}(\Om)
=\left\{\begin{aligned}
&\{\Bu\in J_q(\Om)\cap B_{q,p}^{2-2/p}(\Om)^N \mid (\mu\BD(\Bu)\Be_N)_\tau=0 \text{ on $\Ga$, }\Bu=0 \text{ on $S$}\} \\
&\quad\text{when $2-2/p>1+1/q$,} \\
&\{\Bu\in J_q(\Om)\cap B_{q,p}^{2-2/p}(\Om)^N \mid \Bu=0 \text{ on $S$}\} \\
&\quad \text{when $1/q<2-2/p<1+1/q$,} \\
&J_q(\Om)\cap B_{q,p}^{2-2/p}(\Om)^N \text{ when $2-2/p<1/q$,}
\end{aligned}\right.
\end{equation*}
where we have set $B_{q,p}^{2-2/p}(\Om)=(L_q(\Om),W_q^2(\Om))_{1-1/p,p}$
and $\Bv_\tau=\Bv-\Be_N(\Be_N\cdot\Bv)$ for any $N$-vector $\Bv$.
\end{rema}

Throughout this paper, the letter $C$ denotes generic constants and
$C_{a,b,c,\dots}$ means that the constant depends on the quantities $a,b,c,\dots$.
The values of constants $C$ and $C_{a,b,c,\dots}$ may change from line to line.

%%%%%%%%%%%%%%%%%%%%%%%%%%%%%%%%%%%%%%%%%%%%%%%%%%%%%%%%%%%%%%%%%%%%%%
\subsection{Main results}\label{subsec:main}
This subsection introduces our main results of this paper.

First the global solvability of \eqref{eq:11}-\eqref{eq:15} is stated as follows:

\begin{theo}\label{theo:main}
Let $p$ and $q$ be exponents satisfying
\begin{equation}\label{pq}
2<p<\infty, \quad N<q<\infty, \quad \frac{2}{p}+\frac{N}{q}<1.
\end{equation}
Then there exist positive constants $\ga_0$, $\de_0$, and $\ep_0$ such that,
for any $\Ba\in D_{q,p}(\Om)$ with $\|\Ba\|_{D_{q,p}(\Om)}\leq \ep_0$,
the equations \eqref{eq:11}-\eqref{eq:15} admit a unique solution
\begin{equation*}
(\Bu,\Fp) \in W_{q,p}^{2,1}(\Om\times\BR_+)^N \times L_p(\BR_+,W_q^1(\Om)),
\end{equation*}
with $\lim_{t\to0+}\|\Bu(t)-\Ba\|_{B_{q,p}^{2-2/p}(\Om)}=0$, satisfying the estimate:
\begin{equation}\label{est:main}
\|e^{\ga_0 t}(\pa_t\Bu,\Bu,\nabla\Bu,\nabla^2\Bu)\|_{L_p(\BR_+,L_q(\Om))}
+ \|e^{\ga_0 t} \te\|_{L_p(\BR_+,W_q^1(\Om))} \leq \de_0.
\end{equation}
\end{theo}

\begin{rema}
We discuss the equations \eqref{eq:2}-\eqref{eq:7} in Section \ref{sec:original} below.
\end{rema}

Next we introduce the maximal $L_p\text{-}L_q$ regularity with exponential stability
for the following linearized system associated with \eqref{eq:11}-\eqref{eq:15}: 
\begin{equation}\label{eq:linear}
\left\{\begin{aligned}
\pa_t\Bu-\Di\BT(\Bu,\Fp) &= \Bf && \text{in $\Om$, $t>0$,} \\
\di\Bu &= g && \text{in $\Om$, $t>0$,} \\
\BT(\Bu,\Fp)\Be_N &= \Bh && \text{on $\Ga$, $t>0$,} \\
\Bu &= 0 && \text{on $S$, $t>0$,} \\
\Bu|_{t=0}&=\Ba  && \text{in $\Om$.}
\end{aligned}\right.
\end{equation}
To this end, following \cite{MS17},
we introduce some function spaces related to the solvability of the divergence equation
$\di\Bu=g$ in $\Om$ with boundary condition $\Bu\cdot(-\Be_N)=0$ on $S$.
Let $1<q<\infty$ and $q'=q/(q-1)$.
Noting \cite[Lemma 2.3]{Abels06},
we can regard $W_{q',\Ga}^1(\Om)$ as a Banach space with norm $\|\nabla\cdot\|_{L_{q'}(\Om)}$,
which is denoted by $\wh W_{q',\Ga}^1(\Om)$.
Assume that $g\in L_q(\Om)$, and then one has
\begin{equation*}
|(g,\ph)_\Om| \leq C\|g\|_{L_q(\Om)}\|\nabla\ph\|_{L_{q'}(\Om)}
\quad \text{for any $\ph\in \wh W_{q',\Ga}^1(\Om)$,}
\end{equation*}
with some positive constant $C$ independent of $g$ and $\ph$.
This inequality implies that $g$ is an element of $\wh W_{q,\Ga}^{-1}(\Om)$,
where $\wh W_{q,\Ga}^{-1}(\Om)$ is the dual space of $\wh W_{q',\Ga}^1(\Om)$.
Here we see $g$ as a functional on $\{\nabla\ph \mid \ph\in \wh W_{q',\Ga}^1(\Om)\}\subset L_{q'}(\Om)^N$,
which, combined with Hahn-Banach's theorem, furnishes that
there is a $\BG\in L_q(\Om)^N$ such that
\begin{equation*}
\|g\|_{\wh W_{q,\Ga}^{-1}(\Om)} = \|\BG\|_{L_q(\Om)}, \quad
(g,\ph)_\Om=-(\BG,\nabla\ph)_\Om \quad \text{for all $\ph\in \wh W_{q',\Ga}^1(\Om)$.}
\end{equation*}
Let $[\BG]=\{\BG+\Bf \mid \Bf\in J_q(\Om)\}\in L_q(\Om)^N/J_q(\Om)$.
Then $L_q(\Om)\ni g \mapsto [\BG]\in L_q(\Om)^N/J_q(\Om)$
is well-defined, so that we denote $[\BG]$ by $\CG(g)$.
One especially notes that $\|\CG(g)\|_{L_q(\Om)^N/J_q(\Om)}=\|g\|_{\wh W_{q,\Ga}^{-1}(\Om)}$.
%and that $\Fg$ is a representative of $\CG(g)$
%if and only if
%\begin{equation}\label{def:rep}
%(g,\ph)_\Om = -(\Fg,\nabla\ph)_\Om \quad \text{for all $\ph\in W_{q',\Ga}^1(\Om)$.}
%\end{equation}
Thus, for any $g\in L_q(\Om)$ and any representative $\Fg$ of $\CG(g)$ regular enough, we have
\begin{equation*}
(\di\Fg,\ph)_\Om-(\Fg\cdot(-\Be_N),\ph)_S=(g,\ph)_\Om \quad \text{for all $\ph\in W_{q',\Ga}^1(\Om)$,}
\end{equation*}
which implies that $\Bu=\Fg$ solves the divergence equation mentioned above.
%In what follows, 
%$\CG(g)\in W_q^1(I,L_q(\Om))^N$ for an interval $I$ of $\BR$ means that
%there is a representative $\Fg\in \CG(g)$ such that $\Fg\in W_p^1(I,L_q(\Om))^N$

 Now we state the main result for \eqref{eq:linear} as follows:

%Now we state the maximal $L_p\text{-}L_q$ regularity with exponential stability
%for \eqref{eq:linear} as follows:

\begin{theo}\label{theo:main2}
Let $1<p,q<\infty$ with $2/p+1/q\neq 1$ and $2/p+1/q\neq 2$. Then there exists a positive constant $\si_0$ such that,
for every $\Bf$, $g$, $\Bh$, and $\Ba$ satisfying
\begin{align*}%{2}
&e^{\si_0 t}\Bf \in L_p(\BR_+,L_q(\Om))^N, \quad
e^{\si_0 t}g \in {}_0 H_{q,p}^{1,1/2}(\BR_+\times\Om)\cap {}_0W_p^1(\BR_+,\wh W_{q,\Ga}^{-1}(\Om)), \\
%&e^{\si_0 t}(\pa_t g,g)\in L_p(\BR_+,\wh W_{q,\Ga}^{-1}(\Om)) \\
%&\Bg \in {}_0W_{p,-\si_0}^1(\BR_+,L_q(\Om))^N, \quad
&e^{\si_0 t}\Bh\in{}_0 H_{q,p}^{1,1/2}(\BR_+\times\Om)^N, \quad
\Ba\in D_{q,p}(\Om),
\end{align*}
%\begin{align*}
%&\Bf \in L_{p,-\si_0}(\BR_+,L_q(\Om))^N, \quad
%g\in {}_0H_{q,p,-\si_0}^{1,1/2}(\BR_+\times\Om)\cap {}_0 W_{p,-\si_0}^1(\BR_+,\wh W_{q,\Ga}^{-1}(\Om)), \\
%&\Bh\in {}_0 H_{q,p,-\si_0}^{1,1/2}(\BR_+\times\Om)^N, \quad
%\Ba\in D_{q,p}(\Om),
%\end{align*}
the system \eqref{eq:linear} admits a unique solution $(\Bu,\Fp)\in W_{q,p}^{2,1}(\Om\times\BR_+)^N\times L_p(\BR_+,W_q^1(\Om))$
with $\lim_{t\to0+}\|\Bu(t)-\Ba\|_{B_{q,p}^{2-2/p}(\Om)}=0$.
%\begin{equation*}
%(\Bu,\Fp)\in W_{q,p}^{2,1}(\Om\times\BR_+)^N\times L_p(\BR_+,W_q^1(\Om)).
%\end{equation*}
In addition, %for any representative $\Fg\in {}_0W_{p,-\si_0}^1(\BR_+,L_q(\Om))^N$ of $\CG(g)$,
the solution $(\Bu,\Fp)$ satisfies %the estimate:
\begin{align*}
&\|e^{\si_0 t}(\pa_t\Bu,\Bu,\nabla\Bu,\nabla^2\Bu)\|_{L_p(\BR_+,L_q(\Om))}
+ \|e^{\si_0 t}\Fp\|_{L_p(\BR_+,W_q^1(\Om))} \\
&\leq
c_0\Big(\|e^{\si_0 t}\Bf\|_{L_p(\BR_+,L_q(\Om))}
+\|e^{\si_0 t}g\|_{W_p^1(\BR_+,\wh W_{q,\Ga}^{-1}(\Om))} \\
&\quad +\|e^{\si_0 t}(g,\Bh)\|_{{}_0H_{q,p}^{1,1/2}(\Om\times\BR_+)} 
+\|\Ba\|_{D_{q,p}(\Om)}\Big),
\end{align*}
with some positive constant $c_0\geq 1$ depending only on $N$, $d$, $p$, $q$, $\mu$, and $\si_0$.
\end{theo}

%%%%%%%%%%%%%%%%%%%%%%%%%%%%%%%%%%%%%%%%%%%%%%%%%%%%%%%%%%%%%%%%%%%%%%
\section{Preliminaries}\label{sec:pre}
This section introduces some results concerning 
a time-shifted problem for \eqref{eq:linear}, a variational problem, and the Helmholtz decomposition on $\Om$.

\subsection{A time-shifted problem}
We consider in this subsection the following time-shifted linear system:
\begin{equation}\label{eq:shifted}
\left\{\begin{aligned}
\pa_t\Bu+2\de\Bu-\Di\BT(\Bu,\Fp) = \Bf & && \text{in $\Om$, $t\in\BR$,} \\
\di\Bu = g & && \text{in $\Om$, $t\in\BR$,} \\
\BT(\Bu,\Fp)\Be_N = \Bh & && \text{on $\Ga$, $t\in\BR$,} \\
\Bu = 0 & && \text{on $S$, $t\in\BR$,}
\end{aligned}\right.
\end{equation}
where $\de$ is a positive constant. 
More precisely, we prove

\begin{prop}\label{prop:shifted}
Let $1<p,q<\infty$ and $\de>0$. Then, for every
\begin{align*}
&\Bf \in L_{p,-\de}(\BR,L_q(\Om))^N, \quad 
g\in H_{q,p,-\de}^{1,1/2}(\Om\times\BR) \cap W_{p,-\de}^1(\BR,\wh W_{q,\Ga}^{-1}(\Om)),\\
&\Bh\in H_{q,p,-\de}^{1,1/2}(\Om\times\BR)^N,
\end{align*}
%and
%\begin{equation*}
%g\in H_{q,p,-\de}^{1,1/2}(\Om\times\BR) \cap W_{p,-\de}^1(\BR,\wh W_{q,\Ga}^{-1}(\Om)),
%\end{equation*}
%\begin{align*}
%&\Bf \in L_{p,-\de}(\BR,L_q(\Om))^N, \quad
%g\in H_{q,p,-\de}^{1,1/2}(\Om\times\BR)\cap L_{p,-\de}(\BR,DI_q(\Om)), \\
%&\Bh\in H_{q,p,-\de}^{1,1/2}(\Om\times\BR)^N,
%\end{align*}
the system \eqref{eq:shifted} admits a unique solution $(\Bu,\Fp)$  with
\begin{equation*}
\Bu \in W_{q,p,-\de}^{2,1}(\Om\times\BR)^N, \quad
\Fp \in L_{p,-\de}(\BR,W_q^1(\Om)).
\end{equation*}
In addition, the following assertions hold true.
\begin{enumerate}[$(1)$]
\item\label{prop:shifted_1}
%For any representative $\Fg\in W_{p,-\de}^1(\BR,L_q(\Om))^N$ of $\CG(g)$, we have
The solution $(\Bu,\Fp)$ satisfies the estimate:
\begin{align*}%\label{est:1}
&\|e^{\de t}(\pa_t\Bu,\Bu,\nabla\Bu,\nabla^2\Bu)\|_{L_p(\BR,L_q(\Om))} + \|e^{\de t}\Fp\|_{L_p(\BR,W_q^1(\Om))} \\
&\leq
C\Big(\|e^{\de t}\Bf\|_{L_p(\BR,L_q(\Om))}
+\|e^{\de t}(\pa_t g,g)\|_{L_p(\BR,\wh W_{q,\Ga}^{-1}(\Om))}
+ \|e^{\de t}(g,\Bh)\|_{H_{q,p}^{1,1/2}(\Om\times\BR)}\Big),
\end{align*}
with some positive constant $C=C_{N,d,p,q,\de,\mu}$.
\item\label{prop:shifted_2}
If $\Bf$, $g$, and $\Bh$ vanish for $t<0$, then $\Bu$ also vanishes for $t<0$.
%$\Bu|_{t=0}=0$ in $B_{q,p}^{2-2/p}(\Om)^N$.
% and $\Fp$ also vanish for $t<0$.
\end{enumerate}
\end{prop}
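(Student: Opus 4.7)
The plan is to use a combination of exponential rescaling in time and the Fourier transform to reduce the weighted problem \eqref{eq:shifted} to an unweighted resolvent problem on $\Om$, and then to invoke operator-valued Fourier multiplier theory. Setting $\wt\Bu:=e^{\de t}\Bu$ and $\wt\Fp:=e^{\de t}\Fp$ (and likewise for the data), the identity $e^{\de t}(\pa_t+2\de)\Bu=(\pa_t+\de)\wt\Bu$ converts \eqref{eq:shifted} into an unweighted system on $\Om\times\BR$ of the same shape but with the mass term $\de\wt\Bu$ in place of $2\de\Bu$. Taking the Fourier transform in $t$ and writing $\la=\de+i\tau$ gives, for each $\tau\in\BR$, the resolvent problem
\begin{equation*}
\la\BU-\Di\BT(\BU,\Pi)=\wh{\wt\Bf},\quad \di\BU=\wh{\wt g},\quad \BT(\BU,\Pi)\Be_N|_\Ga=\wh{\wt\Bh},\quad \BU|_S=0
\end{equation*}
on $\Om$, parametrised by $\la$ on the vertical line $\de+i\BR\subset\{\mathrm{Re}\,\la\geq\de\}$, which stays bounded away from the spectrum.

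The core analytic input I would invoke is the existence of $\CR$-bounded families of solution operators $(\wh{\wt\Bf},\wh{\wt g},\wh{\wt\Bh})\mapsto(\BU,\Pi)$ that are uniform for $\la$ in the sector $\Si_\de:=\{\la\in\BC\mid\mathrm{Re}\,\la\geq\de\}$. On the infinite layer $\Om$ this is the content of the results of Abels \cite{Abels06} and Saito \cite{Saito15b}, complemented where necessary by the half-space and bent half-space families of Shibata \cite{Shibata15}. Applying the Weis operator-valued Mikhlin multiplier theorem to the symbols $\la\CT(\la)$, $\nabla^2\CT(\la)$, $(1+\tau^2)^{1/4}\CT(\la)$ and their analogues for the pressure then produces the $L_p$-in-time bounds, and reversing the exponential substitution yields the estimate in \eqref{prop:shifted_1}.

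To accommodate a divergence datum $g$ that is only in $\wh W_{q,\Ga}^{-1}$ in the $t$-variable, I would first use the lift $\Fg$ associated with $\CG(g)$ from Section 2.2: subtracting $\Fg$ from $\Bu$ reduces to the case $g=0$ at the cost of modified right-hand sides, and the term $\pa_t\Fg$ produces precisely the norm $\|(\pa_t g,g)\|_{L_p(\BR,\wh W_{q,\Ga}^{-1}(\Om))}$ on the right-hand side. Uniqueness then follows from the injectivity of the resolvent at every $\la\in\de+i\BR$, since a homogeneous solution in the weighted class has vanishing Fourier transform. The causality assertion \eqref{prop:shifted_2} follows from a Paley-Wiener argument: data supported in $t\geq 0$ have Laplace transforms that extend holomorphically to $\mathrm{Re}\,\la>\de$, and the holomorphy of the resolvent solution operators on this half-plane forces $(\Bu,\Fp)$ to vanish for $t<0$.

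The main obstacle I expect is establishing the $\CR$-boundedness on the infinite layer uniformly for $\la$ on the whole line $\de+i\BR$. The unboundedness of $\Om$ precludes the compactness-based perturbation arguments available for bounded domains, and the low-frequency behaviour of the pure Stokes resolvent on an infinite layer is known to be singular; the positive shift $\de>0$ is what removes this singularity, but the dependence of the constants on $\de$ must be tracked carefully throughout. The $\wh W_{q,\Ga}^{-1}$-reformulation of the divergence data through the functional $\CG$, with the representation $(g,\ph)_\Om=-(\BG,\nabla\ph)_\Om$ encoding the $\pa_t g$ norm, is the technical device that allows the estimate to close in exactly the form stated.
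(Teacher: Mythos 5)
Your proposal is sound in spirit, but it takes a considerably heavier route than the paper and in doing so re-derives material the paper simply cites. The paper proves Proposition \ref{prop:shifted} in two lines: set $\BU=e^{2\de t}\Bu$, $P=e^{2\de t}\Fp$, $\BF=e^{2\de t}\Bf$, $G=e^{2\de t}g$, $\BH=e^{2\de t}\Bh$. Then $\pa_t\BU=e^{2\de t}(\pa_t+2\de)\Bu$, so the mass term $2\de\Bu$ is absorbed entirely and $(\BU,P)$ solves the \emph{unshifted} system \eqref{eq:0data}. Since $e^{-\de t}\BU=e^{\de t}\Bu$, the weighted hypotheses $\Bf\in L_{p,-\de}$, etc., translate into $\BF\in L_{p,\de}$, etc., which is exactly what Lemma \ref{lemm:0data} (i.e.\ \cite[Theorem 2.1]{Saito15b}) requires, and its conclusion gives both the estimate and the causality statement directly. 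You chose the substitution $\wt\Bu=e^{\de t}\Bu$, which leaves a residual mass term $\de\wt\Bu$; this forces you to pass to the Fourier transform, parametrize a resolvent problem on the line $\la=\de+i\tau$, and invoke $\CR$-bounded solution-operator families together with Weis's multiplier theorem. That machinery is precisely what underlies the proof of \cite[Theorem 2.1]{Saito15b}, so you are effectively re-proving the cited lemma rather than using it, and the ``main obstacle'' you identify — establishing $\CR$-boundedness on $\de+i\BR$ uniformly on the infinite layer — is exactly the content already packaged in Lemma \ref{lemm:0data}. Nothing in your sketch is wrong in principle (the Paley–Wiener causality argument and the reduction of the divergence datum via a lift of $\CG(g)$ are both standard), but the exponent $2\de$ rather than $\de$ in the rescaling is the observation that collapses the proof to a one-step reduction, and missing it is why your version balloons into a full maximal-regularity derivation.
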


\begin{rema}\label{rema:trace}
Proposition \ref{prop:shifted} \eqref{prop:shifted_2} implies $\Bu|_{t=0}=0$ in $L_q(\Om)^N$,
provided that $\Bf$, $g$, and $\Bh$ vanish for $t<0$.
\end{rema}

To prove Proposition \ref{prop:shifted}, we start with %the following system: 
\begin{equation}\label{eq:0data}
\left\{\begin{aligned}
\pa_t\BU-\Di\BT(\BU,P) &= \BF && \text{in $\Om$, $t\in\BR$,} \\
\di\BU &= G && \text{in $\Om$, $t\in\BR$,} \\
\BT(\BU,P)\Be_N &= \BH && \text{on $\Ga$, $t\in\BR$,} \\
\BU &= 0 && \text{on $S$, $t\in\BR$.}
\end{aligned}\right.
\end{equation}
Concerning this system, we have

\begin{lemm}\label{lemm:0data}
Let $1<p,q<\infty$ and $\ga>0$. Then, for every
\begin{align*}
&\BF \in L_{p,\ga}(\BR,L_q(\Om))^N, \quad
G \in H_{q,p,\ga}^{1,1/2}(\Om\times\BR) \cap W_{p,\ga}^1(\BR,\wh W_{q,\Ga}^{-1}(\Om)), \\
&\BH \in H_{q,p,\ga}^{1,1/2}(\Om\times\BR)^N,
\end{align*}
the system \eqref{eq:0data} admits a unique solution $(\BU,P)$ with
\begin{equation*}
\BU \in W_{q,p,\ga}^{2,1}(\Om\times\BR)^N, \quad
P \in L_{p,\ga}(\BR,W_q^1(\Om)).
\end{equation*}
In addition, the following assertions hold true.
\begin{enumerate}[$(1)$]
\item\label{lemm:0data_1}
%For any representative $\FG\in W_{p,\ga}^1(\BR,L_q(\Om))^N$ of $\CG(G)$,
The solution $(\BU,P)$ satisfies the estimate:
\begin{align*}
&\|e^{-\ga t}(\pa_t\BU,\BU,\nabla\BU,\nabla\BU^2)\|_{L_p(\BR,L_q(\Om))}
+\|e^{-\ga t}P\|_{L_p(\BR,W_q^1(\Om))} \\
&\leq
C\Big(\|e^{-\ga t}\BF\|_{L_p(\BR,L_q(\Om))} +\|e^{-\ga t}(\pa_t G,G)\|_{L_p(\BR,\wh W_{q,\Ga}^{-1}(\Om))}\\
&\quad
+\|e^{-\ga t}(G,\BH)\|_{H_{q,p}^{1,1/2}(\Om\times\BR)}\Big),
\end{align*}
with some positive constant $C=C_{N,d,p,q,\ga,\mu}$.
\item\label{lemm:0data_2}
If $\BF$, $G$, and $\BH$ vanish for $t<0$,
then $\BU$ also vanishes for $t<0$.
\end{enumerate}
\end{lemm}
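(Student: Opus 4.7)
The plan is to reduce \eqref{eq:0data} to a stationary resolvent problem on $\Om$ and to derive maximal regularity from an operator-valued Fourier multiplier argument applied to an $\CR$-bounded family of solution operators. First I would introduce the weighted unknowns $\BV=e^{-\ga t}\BU$, $Q=e^{-\ga t}P$, and multiply the data by $e^{-\ga t}$ as well; a direct computation then shows that $(\BU,P)$ solves \eqref{eq:0data} in the $\ga$-weighted class if and only if $(\BV,Q)$ belongs to the unweighted class $W_{q,p}^{2,1}(\Om\times\BR)^N\times L_p(\BR,W_q^1(\Om))$ and satisfies the shifted system
\begin{align*}
\pa_t\BV+\ga\BV-\Di\BT(\BV,Q) &= e^{-\ga t}\BF \ \text{in $\Om$,}\\
\di\BV &= e^{-\ga t}G \ \text{in $\Om$,}\\
\BT(\BV,Q)\Be_N &= e^{-\ga t}\BH \ \text{on $\Ga$,}\quad \BV=0\ \text{on $S$.}
\end{align*}
Thus it suffices to treat the unweighted problem with the extra zero-order term $+\ga\BV$, which for $\ga>0$ provides a genuine spectral shift into the right half-plane after temporal Fourier transform.

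Next I would remove the divergence inhomogeneity and pass to the resolvent problem. The hypotheses on $G$ yield a time-dependent representative $\Fg\in L_p(\BR,W_q^1(\Om))^N\cap W_p^1(\BR,L_q(\Om))^N$ of $\CG(G)$ satisfying $\di\Fg=G$ in $\Om$ and $\Fg\cdot(-\Be_N)=0$ on $S$; this is produced by performing the construction sketched after \eqref{eq:linear} in a $t$-measurable fashion, with $\pa_t G\in L_p(\BR,\wh W_{q,\Ga}^{-1}(\Om))$ matching $\pa_t\Fg\in L_p(\BR,L_q(\Om))^N$. Subtracting $\Fg$ from $\BV$ absorbs $G$ into modified $\BF$ and $\BH$ of the same class. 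Applying the temporal Fourier transform, the problem becomes, for each $\tau\in\BR$, the resolvent system
\begin{align*}
\la\wh\BV-\Di\BT(\wh\BV,\wh Q)=\wh\BF,\ \ \di\wh\BV=0\ \text{in $\Om$,}\ \ \BT(\wh\BV,\wh Q)\Be_N|_\Ga=\wh\BH,\ \ \wh\BV|_S=0,
\end{align*}
at spectral parameter $\la=\ga+i\tau$. The technical core is the construction of an $\CR$-bounded family of solution operators $(\BV(\la),Q(\la))$ with $\CR$-bound uniform on the closed half-plane $\{{\rm Re}\,\la\geq\ga\}$, at the full maximal-regularity level (bounds on $(\la,\la^{1/2}\nabla,\nabla^2)\BV$ and on $\nabla Q$). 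Since $\Om$ is the infinite layer $\tws\times(0,d)$ with Dirichlet on $S$ and a Neumann-type condition on $\Ga$, this is achieved via partial Fourier transform in $\xi'$, reducing the resolvent problem to a two-point boundary-value problem on $(0,d)$ whose explicit solution kernel admits the required $\CR$-bounds. Weis's operator-valued multiplier theorem then converts these bounds into the maximal regularity estimate for $(\BV,Q)$, which, together with the reverse substitution $\BU=e^{\ga t}\BV$, gives \eqref{lemm:0data_1}. Uniqueness follows from this estimate applied to the difference of two solutions with zero data.

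For the causality claim \eqref{lemm:0data_2}, the estimate of \eqref{lemm:0data_1} holds for \emph{every} $\ga>0$, so solutions are unique in each weighted class. Given data supported in $\{t\geq 0\}$, the Fourier transform in $t$ of $\BU$ admits a holomorphic extension to the half-plane $\{{\rm Re}\,\la\geq\ga\}$ via the solution operator $(\BV(\la),Q(\la))$ constructed above, and a standard Paley--Wiener argument then forces $\BU$ itself to vanish for $t<0$. The main obstacle will be the third step: proving the $\CR$-boundedness of the resolvent solution family on the layer uniformly up to the vertical line $\{{\rm Re}\,\la=\ga\}$, and doing so with the anisotropic maximal-regularity norms that also capture the $H_{q,p}^{1,1/2}$ control of $\BH$ and the $\wh W_{q,\Ga}^{-1}$ control of $\pa_t G$; the half-space case is classical, but the reflection and transmission induced by the rigid bottom $S$ require the explicit strip analysis alluded to above.
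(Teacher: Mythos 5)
The paper's entire proof of this lemma is a one-line citation: ``This lemma was proved in \cite[Theorem 2.1]{Saito15b}.'' What you have written is, in outline, a reconstruction of the proof of that cited theorem — shift to a resolvent problem via a weighted transform, then establish $\CR$-boundedness of the solution operator family for the generalized Stokes resolvent problem on the infinite layer $\tws\times(0,d)$ (partial Fourier transform in $\xi'$ reducing to an explicit two-point boundary-value problem in $x_N$), and finally invoke Weis's operator-valued Fourier multiplier theorem, with causality following from Paley--Wiener plus the a priori estimate. That is indeed the machinery of \cite{Saito15b}, so the route you identify is the right one; the main thing you missed is that the present paper does not do any of this work but imports it wholesale, which is why you should read and cite the reference rather than rebuilding it.

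One step in your sketch does not go through as stated. You propose to ``remove the divergence inhomogeneity'' by producing a representative $\Fg$ of $\CG(G)$ with $\Fg\in L_p(\BR,W_q^1(\Om))^N\cap W_p^1(\BR,L_q(\Om))^N$, $\di\Fg=G$, and $\Fg\cdot(-\Be_N)|_S=0$, invoking ``the construction sketched after \eqref{eq:linear}.'' That construction produces, for each fixed $t$, an element $\BG\in L_q(\Om)^N$ (not $W_q^1$) via Hahn--Banach applied to $g\mapsto(g,\ph)_\Om$ viewed as a functional of $\nabla\ph$; the divergence identity $\di\BG=g$ then holds only in the very weak (distributional/dual) sense, and $\BG$ has no first-order spatial derivatives to speak of. Consequently, subtracting $\Fg$ from $\BV$ neither yields the modified $\Bf$ in $L_p(\BR,L_q(\Om))$ nor preserves the $H_{q,p}^{1,1/2}$ class for the modified $\Bh$, and the $\wh W_{q,\Ga}^{-1}$ control of $\pa_t G$ cannot simply be traded for $L_q$ control of $\pa_t\Fg$. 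In \cite{Saito15b}, as in the Shibata--Shimizu and Denk--Hieber--Pr\"uss frameworks it follows, the inhomogeneous divergence is \emph{not} eliminated by a lift: it is kept in the resolvent system, the solution operator family is constructed directly for data $(\wh\BF,\wh G,\wh\BH)$ with $G$ measured simultaneously in $H_{q,p}^{1,1/2}$ and $\pa_t G$ in $L_p(\BR,\wh W_{q,\Ga}^{-1})$, and the $\CR$-bounds and multiplier estimate are proved for this full family. If you want to actually carry out the program you sketched, you should dispense with the lift and treat $G$ inside the resolvent problem; the rest of your outline is sound.
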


\begin{proof}
This lemma was proved in \cite[Theorem 2.1]{Saito15b}.
\end{proof}

\begin{proof}[Proof of Proposition $\ref{prop:shifted}$]
In order to apply Lemma \ref{lemm:0data} with $\ga=\de$ to \eqref{eq:shifted}, 
we set $\BF=e^{2\de t}\Bf$, $G=e^{2\de t}g$, and $\BH=e^{2\de t}\Bh$.
It then is clear that
\begin{align*}
&\BF \in L_{p,\de}(\BR,L_q(\Om))^N, \quad
G \in H_{q,p,\de}^{1,1/2}(\Om\times\BR)\cap W_{p,\de}^1(\BR,\wh W_{q,\Ga}^{-1}(\Om)), \\
&\BH \in H_{q,p,\de}^{1,1/2}(\Om\times\BR)^N.
\end{align*}
%On the other hand, setting $\FG=e^{2\de t}\Fg$,
%we see that $\FG$ is a representative of $\CG(G)$
%and $\FG$ belongs to $W_{p,\de}^1(\BR,L_q(\Om))$.
Thus, by Lemma \ref{lemm:0data}, % with $\ga=\de$,
we have a solution
$(\BU,P)\in W_{q,p,\de}^{2,1}(\Om\times\BR)^N\times L_{p,\de}(\BR,W_q^1(\Om))$ to \eqref{eq:0data},
which satisfies
\begin{align*}
&\|e^{-\de t}(\pa_t\BU,\BU,\nabla\BU,\nabla\BU^2)\|_{L_p(\BR,L_q(\Om))}
+\|e^{-\de t}P\|_{L_p(\BR,W_q^1(\Om))} \\
%&\leq
%C\Big(\|e^{-\de t}(\BF,\pa_t\FG,\FG)\|_{L_p(\BR,L_q(\Om))}
%+\|e^{-\de t}(G,\BH)\|_{H_{q,p}^{1,1/2}(\Om\times\BR)}\Big) \\
&\leq
C\left(\|e^{\de t}\Bf\|_{L_p(\BR,L_q(\Om))}
+\|e^{\de t}(\pa_t g,g)\|_{L_p(\BR, \wh W_{q,\Ga}^{-1}(\Om))}
+\|e^{\de t}(g,\Bh)\|_{H_{q,p}^{1,1/2}(\Om\times\BR)}\right)
\end{align*}
for some positive constant $C=C_{N,d,p,q,\de,\mu}$.
Let $(\Bu,\Fp)=(e^{-2\de t}\BU,e^{-2\de t}P)$,
%$\Bu=e^{-2\de t}\BU$ and $\Fp=e^{-2\de t}P$,
and then $(\Bu,\Fp)$ solves the system \eqref{eq:shifted}
and satisfies the required estimate of Proposition \ref{prop:shifted} \eqref{prop:shifted_1} by the last inequality.
%The uniqueness of solutions for \eqref{eq:shifted} and Proposition \ref{prop:shifted} \eqref{prop:shifted_2}
%follow, respectively, from the uniqueness of solutions for \eqref{eq:0data} and Lemma \ref{lemm:0data}
The other assertions immediately follow from Lemma \ref{lemm:0data},
which completes the proof of Proposition \ref{prop:shifted}.
\end{proof}

\subsection{A variational problem}
Let $1<q<\infty$ and $q'=q/(q-1)$.
This subsection is concerned with the following variational problem:
\begin{equation}\label{eq:vari}
(\nabla u,\nabla\ph)_\Om=(\Bf,\nabla\ph)_\Om \quad \text{for all $\ph\in W_{q',\Ga}^1(\Om)$,}
\end{equation}
which is the so-called {\it weak Dirichlet-Neumann problem}.
Our aim in this subsection is to prove %the following proposition:

\begin{prop}\label{prop:weakDN}
Let $1<q<\infty$ and $q'=q/(q-1)$.
Then, for every $\Bf\in L_q(\Om)^N$,
the variational problem \eqref{eq:vari} admits a unique solution $u\in W_{q,\Ga}^1(\Om)$,
and also $\|u\|_{W_q^1(\Om)}\leq C_{N,d,q}\|\Bf\|_{L_q(\Om)}$
for a positive constant $C_{N,d,q}$.
In this case, we define the solution operator $Q_q$ from $L_q(\Om)^N$ to $W_{q,\Ga}^1(\Om)$
by $Q_q\Bf = u$. 
\end{prop}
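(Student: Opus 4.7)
The plan is to follow the Fourier-analytic approach tailored to the flat layer $\Om=\tws\times(0,d)$, which is essentially the strategy of Abels \cite{Abels06}. I would begin by applying the partial Fourier transform $\CF'$ in the tangential variable $\xi'\in\tws$, reducing the variational problem to a one-parameter family of two-point boundary value problems on $(0,d)$. Indeed, integrating by parts in \eqref{eq:vari} formally identifies it with the elliptic problem $\De u=\di\Bf$ in $\Om$ with $u=0$ on $\Ga$ and $\pa_N u=f_N$ on $S$; after transforming in $\xi'$ the resulting ODE $(\pa_N^2-|\eta|^2)\wh u=\CF'[\di\Bf]$ with $\wh u(\eta,d)=0$ and $\pa_N\wh u(\eta,0)=\CF'[f_N](\eta,0)$ admits an explicit Green's function built from $\cosh$ and $\sinh$ of $|\eta|\xi_N$. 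The $L_q$-boundedness of the resulting solution operator on $\Om$ then follows from the Mikhlin--H\"ormander multiplier theorem applied to the corresponding symbols.

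Once existence is in hand, I would derive the estimate in two steps. The multiplier analysis directly produces $\|\nabla u\|_{L_q(\Om)}\leq C\|\Bf\|_{L_q(\Om)}$; to upgrade this to a full $W_q^1$-bound I would invoke a Poincar\'e-type inequality specific to the layer: for $u\in W_{q,\Ga}^1(\Om)$ the representation $u(\xi',\xi_N)=-\int_{\xi_N}^d \pa_N u(\xi',s)\intd s$ combined with H\"older's inequality gives $\|u\|_{L_q(\Om)}\leq d\,\|\pa_N u\|_{L_q(\Om)}$, from which the claimed bound $\|u\|_{W_q^1(\Om)}\leq C_{N,d,q}\|\Bf\|_{L_q(\Om)}$ follows. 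For uniqueness I would proceed by duality: given $u$ with $\Bf=0$, for an arbitrary $\Bg\in L_{q'}(\Om)^N$ the existence result applied at exponent $q'$ furnishes $v\in W_{q',\Ga}^1(\Om)$ with $(\nabla v,\nabla\psi)_\Om=(\Bg,\nabla\psi)_\Om$ for all $\psi\in W_{q,\Ga}^1(\Om)$. Testing this identity with $\psi=u$ (allowable since $\nabla u\in L_q$ and $\nabla v\in L_{q'}$) and testing the homogeneous equation for $u$ with $\varphi=v$ yields $(\Bg,\nabla u)_\Om=0$, so that $\nabla u=0$, and then $u=0$ by the Poincar\'e inequality just established.

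The main obstacle I anticipate is justifying the $L_q$-boundedness of the Fourier multiplier uniformly in $\xi_N\in(0,d)$. The symbols coming from ratios of hyperbolic functions are smooth away from $\eta=0$ and decay comfortably for $|\eta|$ large because of the exponential damping of $\cosh/\sinh$, but near $\eta=0$ they behave like $1/|\eta|$, and the required Mikhlin-type derivative estimates must be checked with constants independent of the $\xi_N$-parameter. This low-frequency behaviour is precisely the point at which the finite width $d$ plays a stabilising role, and it is the technical step where invoking the detailed multiplier analysis already developed in Abels \cite{Abels06} (or the related arguments in Saito \cite{Saito15b}) would shorten the argument considerably.
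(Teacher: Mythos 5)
Your overall strategy is close in spirit to the paper's: reduce the variational problem to a strong boundary value problem, construct an explicit solution by partial Fourier analysis and estimate via Mikhlin, then establish uniqueness by testing against the dual-exponent solution. The uniqueness argument and the Poincar\'e step (using $u(\xi',\xi_N)=-\int_{\xi_N}^d\pa_N u\,ds$ to bound $\|u\|_{L_q}$ by $\|\pa_N u\|_{L_q}$, which the paper instead obtains by directly estimating the explicit formula for $v$) are both fine.

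There is, however, a gap at the construction step. You propose to solve the ODE
$(\pa_N^2-|\eta|^2)\wh u=\CF'[\di\Bf]$ with $\wh u(\eta,d)=0$ and $\pa_N\wh u(\eta,0)=\CF'[f_N](\eta,0)$
by an explicit Green's function, but for $\Bf\in L_q(\Om)^N$ neither datum makes sense: $\di\Bf$ is only a distribution and the trace $f_N|_S$ does not exist. The paper handles this by first reducing, by density, to $\Bf\in C_0^\infty(\Om)^N$ (which in particular forces $f_N=0$ near $S$, so the Neumann datum on $S$ is genuinely zero, not $f_N$), and then constructing a whole-space particular solution $v=-\sum_j\CF^{-1}\bigl[i\xi_j|\xi|^{-2}\CF[F_j]\bigr]$ through an even/odd reflection. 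The crucial effect of this reflection-plus-whole-space step is that it automatically performs the integration by parts: the final kernel is applied to $\Bf$ itself, not to $\nabla\Bf$, which is what makes the Mikhlin estimate $\|\nabla v\|_{L_q}\leq C\|\Bf\|_{L_q}$ possible. In your direct Green's function approach you would need to do this integration by parts by hand, together with a separate estimate for the boundary layer generated by $f_N|_S$ (or else the same density reduction that removes it). These are exactly the steps your proposal elides. They are not difficult, but they are what the paper's decomposition $u=v+w$ (whole-space particular solution plus harmonic correction with homogeneous Neumann data on $S$) is designed to make painless, and they should be stated explicitly before the multiplier analysis can start.
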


\begin{proof}
Since $C_0^\infty(\Om)$ is dense in $L_q(\Om)$,
it suffices to consider the case where $\Bf=(f_1,\dots,f_N)^\SST\in C_0^\infty(\Om)^N$.

We first consider a strong problem associated with \eqref{eq:vari} as follows:
\begin{equation}\label{eq:strong}
\left\{\begin{aligned}
\De u &= \di\Bf && \text{in $\Om$,} \\
u &=0 && \text{on $\Ga$,} \\
\pa_N u&=0 && \text{on $S$.}
%\Bn\cdot\nabla u &=0 && \text{on $S$,}
\end{aligned}\right.
\end{equation}
%where $\Bn=(0,\dots,0,-1)^\SST$. % and we note that $\Bn\cdot\nabla u=-\pa_N u$.
For $g=g(x',x_N)$ defined on $\BR_+^N$, %and $h=h(x',x_N)$ on $\BR_+^N$,
let $g^o$ and $g^e$ be the odd extension of $g$ and the even extension of $g$, respectively, i.e.
\begin{equation*}
g^o=
\left\{\begin{aligned}
&g(x',x_N) && (x_N>0), \\
&-g(x',-x_N) && (x_N<0)
\end{aligned}\right., \quad
g^e=
\left\{\begin{aligned}
&g(x',x_N) && (x_N>0), \\
&g(x',-x_N) && (x_N<0).
\end{aligned}\right.
\end{equation*}
Regarding $\Bf$ as an element of $C_0^\infty(\BR_+^N)^N$,
we set
\begin{equation}\label{170223_1}
\BF=(F_1,\dots,F_N)^\SST=(f_1^e,\dots,f_{N-1}^e,f_N^o)^\SST.
\end{equation}
Let $\CF[g](\xi)$ be the Fourier transform of $g=g(x)$ and
$\CF_{\xi}^{-1}[h](x)$ the inverse Fourier transform of $h=h(\xi)$, i.e.
\begin{equation*}
\CF[g](\xi)=\int_{\BR^N}e^{-ix\cdot\xi}g(x)\intd x,\quad
\CF_{\xi}^{-1}[h](x)=\frac{1}{(2\pi)^N}\int_{\BR^N}e^{ix\cdot\xi}h(\xi)\intd\xi.
\end{equation*}

Now we define a function $v=v(x)$ on $\BR^N$ by
\begin{equation}\label{170223_2}
v=-\CF_\xi^{-1}\left[\frac{1}{|\xi|^2}\CF[\di\BF](\xi)\right](x)
=-\sum_{j=1}^{N}\CF_\xi^{-1}\left[\frac{i\xi_j}{|\xi|^2}\CF[F_j](\xi)\right](x).
\end{equation}
It then holds that $\De v=\di\BF$ in $\BR^N$, which implies that $\De v=\di\Bf$ in $\Om$.
On the other hand, one has for $k,l=1,\dots,N$
\begin{equation*}
\pa_k v = \sum_{j=1}^{N}\CF_\xi^{-1}\left[\frac{\xi_j\xi_k}{|\xi|^2}\CF[F_j](\xi)\right](x), \quad
\pa_k\pa_l v = \CF_\xi^{-1}\left[\frac{\xi_k\xi_l}{|\xi|^2}\CF[\di\BF](\xi)\right](x),
%\left|\pa_\xi^\al\left(\frac{\xi_j\xi_k}{|\xi|^2}\right)\right|\leq C_\al|\xi|^{-|\al|} \quad(\xi\in\BR^N\setminus\{0\})
\end{equation*}
which, combined with the Fourier multiplier theorem of Mikhlin (cf. \cite[Appendix Theorem 2]{Mikhlin65}), furnishes that
\begin{equation}\label{170223_5}
\|\nabla v\|_{L_q(\BR^N)} \leq C_{N,q}\|\Bf\|_{L_q(\Om)}, \quad
\|\nabla^2 v\|_{L_q(\BR^N)}\leq C_{N,q}\|\nabla\Bf \|_{L_q(\Om)},
\end{equation}
for some positive constant $C_{N,q}$.

Next we estimate $\|v\|_{L_q(\Om)}$.
Let $\wh g(\xi',x_N)$ be the partial Fourier transform of $g=g(x',x_N)$
with respect to $x'$ and
$\CF_{\xi'}^{-1}[h(\xi',x_N)](x')$ the inverse partial Fourier transform of $h=h(\xi',x_N)$
with respect to $\xi'$, i.e.
\begin{align*}
&\wh g(\xi',x_N) = \int_{\BR^{N-1}}e^{-ix'\cdot\xi'}g(x',x_N)\intd x', \\
&\CF_{\xi'}^{-1}[h(\xi',x_N)](x') = \frac{1}{(2\pi)^{N-1}}\int_{\BR^{N-1}}e^{ix'\cdot\xi'}h(\xi',x_N)\intd\xi'.
\end{align*}
We then observe that, for $j=1,\dots,N-1$,
\begin{align*}
\CF[F_j](\xi) = \int_0^d \left(e^{-i y_N\xi_N}+e^{i y_N\xi_N}\right)\wh f_j(\xi',y_N)\intd y_N,
\end{align*}
and that
\begin{equation*}
\CF[F_N](\xi)=\int_0^d\left(e^{-i y_N\xi_N}-e^{i y_N\xi_N}\right)\wh f_N(\xi',y_N)\intd y_N.
\end{equation*}
Inserting these formulas into \eqref{170223_2} yields
\begin{align*}
&v= v(x',x_N) =\\
&-\sum_{j=1}^{N-1}\int_0^d\CF_{\xi'}^{-1}
\left[\left(\frac{1}{2\pi}\int_{-\infty}^\infty\frac{e^{i(x_N-y_N)\xi_N}+e^{(x_N+y_N)\xi_N}}{|\xi|^2}\right)\intd\xi_N
i\xi_j\wh f_j(\xi',y_N)\right](x')\intd y_N \\
&-\int_0^d\CF_{\xi'}^{-1}
\left[\left(\frac{1}{2\pi}\int_{-\infty}^\infty\frac{i\xi_N(e^{i(x_N-y_N)\xi_N}+e^{(x_N+y_N)\xi_N})}{|\xi|^2}\right)\intd\xi_N
\wh f_N(\xi',y_N)\right](x')\intd y_N.
\end{align*}
On the other hand, it holds by the residue theorem that for $a\in\BR\setminus\{0\}$
\begin{equation}\label{170223_8}
\int_{-\infty}^\infty\frac{e^{ia\xi_N}}{|\xi|^2}\intd\xi_N
=\frac{\pi e^{-|a||\xi'|}}{|\xi'|}, \quad
\int_{-\infty}^\infty\frac{i\xi_N e^{ia\xi_N}}{|\xi|^2}\intd\xi_N
=-\pi e^{-|a||\xi'|}{\rm sign}(a),
\end{equation}
%which are proved by the residue theorem.
where ${\rm sign}(a)=1$ when $a>0$ and ${\rm sign}(a)=-1$ when $a<0$.
We combine \eqref{170223_8} with the above formula of $v$ in order to obtain
\begin{align*}
&v=-\frac{1}{2}\sum_{j=1}^{N-1}\int_0^d
\CF_{\xi'}^{-1}\left[\frac{i\xi_j}{|\xi'|}
\left(e^{-|x_N-y_N||\xi'|}+e^{-(x_N+y_N)|\xi'|}\right)\wh f_j(\xi',y_N)\right](x')\intd y_N \\
&+\frac{1}{2}\int_0^d\CF_{\xi'}^{-1}\left[
\left({\rm sign}(x_N-y_N)e^{-|x_N-y_N||\xi'|}
+e^{-(x_N+y_N)|\xi'|}\right)\wh f_N(\xi',y_N)\right](x')\intd y_N,
\end{align*}
which implies that $\|v\|_{L_q(\Om)}\leq C_{N,d,q}\|\Bf\|_{L_q(\Om)}$
in the same manner as in the proof of \cite[pages 1897-1898]{Saito15b}.
Hence, together with \eqref{170223_5}, one has
\begin{equation}\label{170223_6}
\|v\|_{W_q^1(\Om)} \leq C_{N,d,q}\|\Bf\|_{L_q(\Om)}, \quad \|v\|_{W_q^2(\Om)}\leq C_{N,d,q}\|\Bf\|_{W_q^1(\Om)}.
\end{equation}

%\begin{lemm}\label{lemm:est}
%\end{lemm}

%Since it holds that, for any multi-index $\al'\in\BN_0^{N-1}$ and for $j=1,\dots,N-1$,
%\begin{equation*}
%\left|\pa_{\xi'}^{\al'}\left(\frac{i\xi_j e^{-a|\xi'|}}{|\xi'|}\right)\right| \leq C_{\al'}|\xi'|^{-|\al'|}, \quad
%\left|\pa_{\xi'}^{\al'}e^{-a|\xi'|}\right|\leq C_{\al'}|\xi'|^{-|\al'|} \quad (a>0)
%\end{equation*}
%with some positive constant $C_{\al'}$ independent of $a$ (cf. e.g. \cite{Saito15b}, \cite{SS12}),
%we have by the Fourier multiplier theorem again % of Mikhlin with respect to the tangential variables,
%\begin{align*}
%\|v(\cdot,x_N)\|_{L_q(\BR^{N-1})}
%&\leq C_{N,q}\int_0^d\|f(\cdot,y_N)\|_{L_q(\BR^{N-1})}\intd y_N 
%\leq C_{N,d,q}\|f\|_{L_q(\Om)}
%\end{align*}
%for some positive constant $C_{N,d,q}$.
%Thus $\|v\|_{L_q(\Om)}\leq C_{N,d,q}\|\Bf\|_{L_q(\Om)}$,
%which, combined with \eqref{170223_5}, furnishes
%\begin{equation}\label{170223_6}
%\|v\|_{W_q^1(\Om)} \leq C_{N,d,q}\|\Bf\|_{L_q(\Om)}.
%\end{equation}

We here prove that $\pa_N v=0$ on $\BR_0^N$.
Noting $\di\BF=(\di\Bf)^e$ by the definition \eqref{170223_1} and setting $z=\di\Bf$,
we have
\begin{align*}
\CF[\di\BF](\xi)=\CF[z^e](\xi)
=\int_0^d\left(e^{-iy_N\xi_N}+e^{iy_N\xi_N}\right)\wh z(\xi',y_N)\intd y_N.
\end{align*}
By this formula and \eqref{170223_2}, one obtains
\begin{align*}
&\pa_N v = (\pa_N v)(x',x_N)=\\
&-\int_0^d\CF_{\xi'}^{-1}\left[
\left(\frac{1}{2\pi}\int_{-\infty}^\infty
\frac{i\xi_N(e^{i(x_N-y_N)\xi_N}+e^{i(x_N+y_N)\xi_N})}{|\xi|^2}\intd\xi_N\right)\wh z(\xi',y_N)\right](x')\intd y_N,
\end{align*}
which, combined with \eqref{170223_8}, furnishes $(\pa_N v)(x',0)=0$.
Let $u=v+w$ in \eqref{eq:strong}, and thus \eqref{eq:strong} is reduced to
% we achieve the following system:
\begin{equation}\label{170223_10}
\left\{\begin{aligned}
\De w &= 0 && \text{in $\Om$,} \\
w&=-v && \text{on $\Ga$,} \\
\pa_N w&=0 && \text{on $S$.}
\end{aligned}\right.
\end{equation}

From now on, we solve the system \eqref{170223_10}. % and estimate solutions to . 
Applying the partial Fourier transform to \eqref{170223_10} yields
\begin{equation*}
\left\{\begin{aligned}
(\pa_N^2-|\xi'|^2)\wh w(\xi',x_N)&= 0 && (0<x_N<d), \\
\wh w(\xi',d)&=-\wh v(\xi',d), \\
\pa_N\wh w(\xi',0)&=0.
\end{aligned}\right.
\end{equation*}
One then solves this system as ordinary differential equations with respect to $x_N$
in order to obtain
\begin{equation*}
w=w(x',x_N)=-\sum_{k=1}^2
\CF_{\xi'}^{-1}\left[\frac{e^{-|\xi'|(d+(-1)^k x_N)}}{1+e^{-2|\xi'|d}}
\wh v(\xi',d)\right](x').
\end{equation*}
%Similarly to the estimate of $\|v\|_{L_q(\Om)}$,
%%As was the case with the estimate of $\|v\|_{L_q(\Om)}$,
%it holds by the trace theorem that
%\begin{equation*}
%\|w(\cdot,x_N)\|_{L_q(\BR^{N-1})}\leq C_{N,d,q}\|v(\cdot,d)\|_{L_q(\BR^{N-1})}\leq C_{N,d,q}\|v\|_{W_q^1(\Om)}
%\end{equation*}
%for some positive constant $C_{N,q,d}$.
%Combining this inequality with \eqref{170223_6} furnishes
%$\|w\|_{L_q(\Om)}\leq C_{N,d,q}\|\Bf\|_{L_q(\Om)}$.
%On the other hand,
%\begin{equation*}
%\nabla w=
%-\sum_{k=1}^2
%\CF_{\xi'}^{-1}\left[\frac{\Xi_k e^{-|\xi'|(d+(-1)^k x_N)}}{1+e^{-2|\xi'|d}}
%\wh v(\xi',d)\right](x'),
%\end{equation*}
%with $\Xi_k=(i\xi_1,\dots,i\xi_{N-1},(-1)^{k+1}|\xi'|)^\SST$.
Let $\ph=\ph(s)$ be a function in $C^\infty(\BR)$ such that $0\leq \ph\leq 1$ and
\begin{equation*}
\ph(s)=\left\{\begin{aligned}
&1 && \text{for $s\geq 2d/3$}, \\
&0 && \text{for $s\leq d/3$},
\end{aligned}\right.
\end{equation*} 
and note that for functions $f(x_N)$, $g(x_N)$ and for $k=1,2$
\begin{equation*}
f(x_N)g(d)
=\int_0^d\frac{d}{d y_N}\left\{\ph(y_N)f(x_N+(-1)^k(y_N-d))g(y_N)\right\}\intd y_N. %\\
%&=\int_0^d\dot\ph(y_N)a(x_N+y_N-d)b(y_N)\intd y_N \\
%&\quad+
\end{equation*}
%where we have set $\dot\ph(y_N)=(d\ph/dy_N)(y_N)$.
Combining these identities with the above formula of $w$, we obtain
%We thus obtain the following formula of $\nabla w$:
\begin{align*}
w
%&= 
%-\sum_{k=1}^2\int_0^d\frac{d}{dy_N}
%\CF_{\xi'}^{-1}\left[\frac{\ph(y_N) e^{-|\xi'|(y_N+(-1)^k x_N)}}{1+e^{-2|\xi'|d}}
%\wh v(\xi',y_N)\right](x')\intd y_N \\
&=-\sum_{k=1}^2\int_0^d
\CF_{\xi'}^{-1}\left[\frac{\dot\ph(y_N) e^{-|\xi'|(y_N+(-1)^k x_N)}}{1+e^{-2|\xi'|d}}
\wh v(\xi',y_N)\right](x')\intd y_N \\
&+\sum_{k=1}^2\int_0^d
\CF_{\xi'}^{-1}\left[\frac{\ph(y_N)|\xi'|  e^{-|\xi'|(y_N+(-1)^k x_N)}}{1+e^{-2|\xi'|d}}
\wh v(\xi',y_N)\right](x')\intd y_N \\
&-\sum_{k=1}^2\int_0^d
\CF_{\xi'}^{-1}\left[\frac{\ph(y_N) e^{-|\xi'|(y_N+(-1)^k x_N)}}{1+e^{-2|\xi'|d}}
\wh{\pa_N v}(\xi',y_N)\right](x')\intd y_N  \\
&=:I_1+I_2+I_3,
\end{align*}
where $\dot\ph(y_N)=(d\ph/dy_N)(y_N)$.
Noting $|\xi'|=|\xi'|^2/|\xi'|=-\sum_{j=1}^{N-1}(i\xi_j)^2/|\xi'|$,
we can write $I_2$ as %follows:
\begin{equation*}
I_2 = -\sum_{j=1}^{N-1}\sum_{k=1}^2\int_0^d
\CF_{\xi'}^{-1}\left[\frac{\ph(y_N)i\xi_j e^{-|\xi'|(y_N+(-1)^k x_N)}}{|\xi'|(1+e^{-2|\xi'|d})}
\wh{\pa_j v}(\xi',y_N)\right](x')\intd y_N.
\end{equation*}
The following lemma was essentially proved in Lemma \cite[Lemma 5.5]{Saito15b}.

\begin{lemm}\label{est:w}
Let $1<q<\infty$. Assume that $m(\xi')$ satisfies
\begin{equation*}
|\pa_{\xi'}^{\al'} m(\xi')|\leq C_{\al'} |\xi'|^{-|\al'|} \quad (\xi'\in\BR^{N-1}\setminus\{0\})
\end{equation*}
for any multi-index $\al'\in\BN_0^{N-1}$, and set for $k=1,2$
\begin{align*}
[L_k f](x)=\int_0^d\CF_{\xi'}^{-1}\left[\dot\ph (y_N)m(\xi')e^{-|\xi'|(y_N+(-1)^k x_N)}\wh f(\xi',y_N)\right](x')\intd y_N, \\
[M_k f](x)=\int_0^d\CF_{\xi'}^{-1}\left[\ph(y_N)m(\xi')e^{-|\xi'|(y_N+(-1)^k x_N)}\wh f(\xi',y_N)\right](x')\intd y_N.
\end{align*}
Then,  for any $f\in L_q(\Om)$, we have
\begin{equation*}
\|(L_k f, M_k f)\|_{W_q^1(\Om)} \leq C_{N,d,q}\|f\|_{L_q(\Om)} \quad (k=1,2).
\end{equation*}
\end{lemm}

It is known by e.g. \cite[Section 5]{SS12} that
for any multi-index $\al'\in\BN_0^{N-1}$
\begin{equation}\label{170525_1}
\big|\pa_{\xi'}^{\al'}(i\xi_j|\xi'|^{-1})\big| 
+\big|\pa_{\xi'}^{\al'}e^{-a|\xi'|}\big|\leq C_{\al'}|\xi'|^{-|\al'|} \quad (\xi'\in\BR^{N-1}\setminus\{0\}),
\end{equation}
where $j=1,\dots,N-1$ and $a\geq 0$, with some positive constant $C_{\al'}$ independent of $\xi'$ and $a$.
In order to estimate $(1+e^{-2|\xi'|d})^{-1}$, we introduce Bell's formula for derivatives of the composite function of
$f(t)$ and $t=g(\xi')$ as follows: For any multi-index $\al'\in\BN_0^{N-1}$,
\begin{equation*}%\label{Bell}
\pa_{\xi'}^{\al'} f(g(\xi'))=\sum_{l=1}^{|\al'|} f^{(l)}(g(\xi'))
\sum_{\stackrel{\text{\scriptsize{$\al_1'+\dots+\al_l'=\al'$,}}}{|\al_m'|\geq 1}}
%\sum_{\al_1+\dots+\al_k=\al,\,|\al_j|\geq 1}
\Ga_{\al_1',\dots,\al_l'}^{\al'}(\pa_{\xi'}^{\al_1'}g(\xi'))\dots(\pa_{\xi'}^{\al_l'}g(\xi'))
\end{equation*}
with suitable coefficients $\Ga_{\al_1',\dots,\al_l'}^{\al'}$,
where $f^{(l)}(t)$ is the $l$th derivative of $f(t)$. 
By Bell's formula with $f(t)=t^{-1}$ and $t=g(\xi')=1+e^{-2|\xi'|d}$ and by \eqref{170525_1}, 
\begin{equation}\label{170525_3}
\big|\pa_{\xi'}^{\al'}(1+e^{-2|\xi'|d})^{-1}\big|
\leq C_{\al'}\sum_{l=1}^{|\al'|}\big|1+e^{-2|\xi'|d}|^{-(l+1)}\big|\xi'|^{-|\al'|}\leq C_{\al'}|\xi'|^{-|\al'|}
\end{equation}
for any $\xi'\in\BR^{N-1}\setminus\{0\}$ and any multi-index $\al'\in\BN_0^{N-1}$.
One thus obtains
%\begin{equation*}
$\|(I_1,I_3)\|_{W_q^1(\Om)} \leq C_{N,d,q}\|v\|_{W_q^1(\Om)}$
by Lemma \ref{est:w} and \eqref{170525_3}.
%\end{equation*}
In addition, it holds by Leibniz's rule, \eqref{170525_1}, and \eqref{170525_3} that
%together with \eqref{170525_1} and \eqref{170525_3},
\begin{equation*}
\left|\pa_{\xi'}^{\al'}\left(\frac{i\xi_j}{|\xi'|(1+e^{-2|\xi'|d})}\right)\right|\leq C_{\al'}|\xi'|^{-|\al'|}
\quad (\xi'\in\BR^{N-1}\setminus\{0\}),
\end{equation*}
which, combined with Lemma \ref{est:w}, furnishes %that
%\begin{equation*}%\label{170224_1}
$\|I_2\|_{W_q^1(\Om)} \leq C_{N,d,q}\|v\|_{W_q^1(\Om)}$.
%\end{equation*} 
%
%
%
%Since it holds that, for any multi-index $\al'\in\BN_0^{N-1}$ and $j=1,\dots,N-1$,
%\begin{equation*}
%\left|\pa_{\xi'}^{\al'}\left(\frac{1}{1+e^{-2|\xi'|d}}\right)\right|\leq C|\xi'|^{-|\al'|}, \quad
%\left|\pa_{\xi'}^{\al'}\left(\frac{i\xi_j}{|\xi'|(1+e^{-2|\xi'|d})}\right)\right|\leq C|\xi'|^{-|\al'|}
%\end{equation*}
%with some positive constant $C=C(\al')$,
%we apply \cite[Lemma 5.3]{Saito15b} to $I_1$, $I_2$, and $I_3$
%in order to obtain
%\begin{equation*}
%\|I_{l}\|_{L_q(\Om)}\leq C\|v\|_{W_q^1(\Om)} \quad (l=1,2,3)
%\end{equation*}
%with some positive constant $C=C(N,q,d)$.
%Thus, by \eqref{170223_6},
%we have $\|\nabla w\|_{L_q(\Om)}\leq C\|\Bf\|_{L_q(\Om)}$.
%Summing up the estimates of $w$, we have obtained
Analogously, we can prove that
%\begin{equation*}
$\|\nabla(I_1,I_2,I_3)\|_{W_q^1(\Om)} \leq C_{N,d,q}\|\nabla v\|_{W_q^1(\Om)}$.
%\end{equation*}
Summing up these inequalities for $I_1$, $I_2$, and $I_3$, one has by \eqref{170223_6}
\begin{equation}\label{170224_1}
\|w\|_{W_q^1(\Om)} \leq C_{N,d,q}\|\Bf\|_{L_q(\Om)}, \quad
\|w\|_{W_q^2(\Om)} \leq C_{N,d,q}\|\Bf\|_{W_q^1(\Om)}.
\end{equation}

It now holds that $u=v+w$ solves \eqref{eq:strong} and satisfies
$\|u\|_{W_q^1(\Om)}\leq C_{N,d,q}\|\Bf\|_{L_q(\Om)}$ by \eqref{170223_6} and \eqref{170224_1}.
Clearly, such an $u$ is a solution to \eqref{eq:vari}. % required in Proposition \ref{prop:weakDN}.

Finally, we prove the uniqueness of solutions to \eqref{eq:vari}.
Let $u\in W_{q,\Ga}^1(\Om)$ be a solution to \eqref{eq:vari} with $\Bf=0$,
%\begin{equation*}
%$(\nabla u,\nabla\ph)_\Om = 0$ for all $\ph\in W_{q',\Ga}^1(\Om)$,
%\end{equation*}
and let $\Phi\in C_0^\infty(\Om)^N$.
Since $\Phi \in L_{q'}(\Om)^N$, there is a $v\in W_{q',\Ga}^1(\Om)$ such that
\begin{equation*}
(\nabla v,\nabla\psi)_\Om=(\Phi,\nabla\psi)_\Om\quad \text{for all $\psi\in W_{q,\Ga}^1(\Om)$.}
\end{equation*}
In this equation, we set $\psi=u$ in order to obtain
\begin{equation*}
(\Phi,\nabla u)_\Om = (\nabla v,\nabla u)_\Om = 0,
\end{equation*}
which implies that $u$ is a constant.
Hence, we have $u=0$, because $u=0$ on $\Ga$.
This completes the proof of the Proposition \ref{prop:weakDN}.
\end{proof}

%%%%%%%%%%%%%%%%%%%%%%%%%%%%%%%%%%%%%%%%%%%%%%%%%%%%%%%%%%%%%%%%%%%%%%
\subsection{Helmholtz decomposition on $\Om$}\label{subsec:HD}
We here introduce the Helmholtz decomposition on $\Om$.
%Let $J_q(\Om)$ be the solenoidal space defined as \eqref{solenoid},
Let $G_q(\Om) = \{\nabla\te \mid \te \in W_{q,\Ga}^1(\Om)\}$,
and then one has
\begin{prop}\label{prop:decomp}
Let $1<q<\infty$.
Then the following assertions hold true.
\begin{enumerate}[$(1)$]
\item\label{prop:decomp1}
$L_q(\Om)^N = J_q(\Om) \oplus G_q(\Om)$.	
\item\label{prop:decomp2}
Let $P_q$ be the projection from $L_q(\Om)^N$ to $J_q(\Om)$,
and let $Q_q$ be the solution operator of Proposition $\ref{prop:weakDN}$
from $L_q(\Om)^N$ to $W_{q,\Ga}^1(\Om)$.
Then, for any $\Bf \in L_q(\Om)^N$,
we have $\Bf = P_q\Bf + \nabla Q_q\Bf\in J_q(\Om)\oplus G_q(\Om)$ and
\begin{equation}\label{170521_1}
\|P_q\Bf\|_{L_q(\Om)}+\|Q_q\Bf\|_{W_q^1(\Om)} \leq C_{N,d,q}\|\Bf\|_{L_q(\Om)}
\end{equation}
for some positive constant $C_{N,d,q}$.
\end{enumerate}
\end{prop}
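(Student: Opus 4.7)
The plan is to use Proposition \ref{prop:weakDN} directly: given $\Bf\in L_q(\Om)^N$, set $u=Q_q\Bf\in W_{q,\Ga}^1(\Om)$, which by construction satisfies
\begin{equation*}
(\nabla u,\nabla\ph)_\Om=(\Bf,\nabla\ph)_\Om \quad \text{for all $\ph\in W_{q',\Ga}^1(\Om)$,}
\end{equation*}
and then define $P_q\Bf=\Bf-\nabla u=\Bf-\nabla Q_q\Bf$. This gives the candidate decomposition $\Bf=P_q\Bf+\nabla Q_q\Bf$, so the bulk of the work for assertion \eqref{prop:decomp2} is already contained in Proposition \ref{prop:weakDN}; what remains is to check that the two pieces lie in the right subspaces and that the sum is direct.

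For membership, I would argue as follows. First, $\nabla Q_q\Bf\in G_q(\Om)$ is automatic from the definition of $G_q(\Om)$ since $Q_q\Bf\in W_{q,\Ga}^1(\Om)$. Second, the variational identity above rewrites as $(\Bf-\nabla u,\nabla\ph)_\Om=0$ for every $\ph\in W_{q',\Ga}^1(\Om)$, which is precisely the definition of $\Bf-\nabla u\in J_q(\Om)$; thus $P_q\Bf\in J_q(\Om)$. Combined with linearity of $Q_q$ (which follows from uniqueness in Proposition \ref{prop:weakDN}), this shows that $P_q$ is a well-defined linear projection onto $J_q(\Om)$ along $G_q(\Om)$, and proves \eqref{prop:decomp1} as a sum $L_q(\Om)^N=J_q(\Om)+G_q(\Om)$.

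To see that the sum is direct, suppose $\Bg\in J_q(\Om)\cap G_q(\Om)$, so that $\Bg=\nabla\te$ for some $\te\in W_{q,\Ga}^1(\Om)$ and $(\nabla\te,\nabla\ph)_\Om=0$ for every $\ph\in W_{q',\Ga}^1(\Om)$. Then $\te$ solves the variational problem \eqref{eq:vari} with $\Bf=0$, so by the uniqueness assertion in Proposition \ref{prop:weakDN} we get $\te=0$, hence $\Bg=0$. This gives $J_q(\Om)\cap G_q(\Om)=\{0\}$, completing \eqref{prop:decomp1}.

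Finally, the estimate \eqref{170521_1} is immediate: Proposition \ref{prop:weakDN} yields $\|Q_q\Bf\|_{W_q^1(\Om)}\leq C_{N,d,q}\|\Bf\|_{L_q(\Om)}$, and then the triangle inequality gives $\|P_q\Bf\|_{L_q(\Om)}\leq \|\Bf\|_{L_q(\Om)}+\|\nabla Q_q\Bf\|_{L_q(\Om)}\leq (1+C_{N,d,q})\|\Bf\|_{L_q(\Om)}$. There is no substantive obstacle, since all the analytic difficulty (solvability, uniqueness, and $W_q^1$-bound for the weak Dirichlet--Neumann problem) has already been absorbed into Proposition \ref{prop:weakDN}; the present proposition is essentially a formal consequence.
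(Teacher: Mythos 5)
Your proof is correct and follows essentially the same route as the paper: both rely entirely on Proposition \ref{prop:weakDN}, define $P_q\Bf=\Bf-\nabla Q_q\Bf$, verify the membership in $J_q(\Om)$ and $G_q(\Om)$ via the variational identity, and establish directness of the sum by invoking the uniqueness assertion of Proposition \ref{prop:weakDN}. Your directness argument is in fact a small streamlining of the paper's (you note directly that $\te$ solves \eqref{eq:vari} with zero data, whereas the paper first shows $Q_q\Bf=0$ and then $Q_q\Bf=\te$), but the key ingredient and the overall structure are identical.
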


\begin{proof}
(1). Let $\Bf\in L_q(\Om)^N$. By Proposition \ref{prop:weakDN},
we have a unique solution $u= Q_q\Bf\in W_{q,\Ga}^1(\Om)$ to the variational problem:
\begin{equation*}
(\nabla u,\nabla\ph)_\Om=(\Bf,\nabla\ph)_\Om \quad \text{for all $\ph\in W_{q',\Ga}^1(\Om)$,}
\end{equation*}
where $q'=q/(q-1)$.
Setting $\Bg = \Bf -\nabla Q_q\Bf$, %with $Q_q\Bf=u$,
we observe that
$\Bg \in J_q(\Om)$ %, $Q_q\Bf\in W_{q,\Ga}^1(\Om)$,
and $\Bf = \Bg + \nabla Q_q\Bf\in J_q(\Om)+G_q(\Om)$.

Next we prove that $L_q(\Om)^N=J_q(\Om)\oplus G_q(\Om)$.
To this end, let $\Bf \in J_q(\Om)\cap G_q(\Om)$. 
We then have
\begin{equation*}
(\nabla Q_q\Bf,\nabla\ph)_\Om=(\Bf,\nabla\ph)_\Om = 0 \quad \text{for all $\ph\in W_{q',\Ga}^1(\Om)$,}
\end{equation*}
which, combined with the uniqueness of Proposition \ref{prop:weakDN},
furnishes $Q_q\Bf=0$.
On the other hand, since there is a $\te\in W_{q,\Ga}^1(\Om)$ such that $\Bf=\nabla\te$,
we have
\begin{equation*}
(\nabla Q_q\Bf,\nabla\ph)_\Om=(\Bf,\nabla\ph)_\Om=(\nabla\te,\nabla\ph)_\Om \quad
\text{for all $\ph\in W_{q',\Ga}^1(\Om)$.}
\end{equation*}
By the uniqueness of Proposition \ref{prop:weakDN} again,
it holds that $\te= Q_q\Bf =0$.
Thus $\Bf=0$, which implies $L_q(\Om)^N=J_q(\Om)\oplus G_q(\Om)$.

(2). For any $\Bf\in L_q(\Om)^N$, the projection $P_q$ is given by $P_q\Bf=\Bf-\nabla Q_q\Bf\in J_q(\Om)$ 
as was seen in the proof of \eqref{prop:decomp1}. 
Thus the first assertion clearly holds true,
and also the second assertion \eqref{170521_1} follows from Proposition \ref{prop:weakDN} immediately.
This completes the proof of Proposition \ref{prop:decomp}. 
\end{proof}

%%%%%%%%%%%%%%%%%%%%%%%%%%%%%%%%%%%%%%%%%%%%%%%%%%%%%%%%%%%%%%%%%%%%%%
\section{Generation of the Stokes semigroup}\label{sec:sg}
Our aim in this section is to construct an analytic $C_0$-semigroup associated with
%, called {\it the Stokes semigroup},  for the following Stokes system:
\begin{equation}\label{eq:sg}
\left\{\begin{aligned}
\pa_t\Bu-\Di\BT(\Bu,\Fp) &= 0 && \text{in $\Om$, $t>0$,} \\
\di\Bu&=0 && \text{in $\Om$, $t>0$,} \\ 
\BT(\Bu,\Fp)\Be_N &= 0 && \text{on $\Ga$, $t>0$,} \\
\Bu &= 0 && \text{on $S$, $t>0$,} \\
\Bu|_{t=0} &= \Ba && \text{in $\Om$.}
\end{aligned}\right.
\end{equation}
To this end, we start with the following resolvent problem: % as follows:
\begin{equation}\label{eq:resolvent}
\left\{\begin{aligned}
\la\Bv-\Di\BT(\Bv,\Fq) &=\Bf && \text{in $\Om$,} \\
\di\Bv&=0 && \text{in $\Om$,} \\
\BT(\Bv,\Fq)\Be_N &= 0 && \text{on $\Ga$,} \\
\Bv &= 0 && \text{on $S$,}
\end{aligned}\right.
\end{equation}
with the resolvent parameter $\la\in\Si_\ep=\{\om\in\BC\setminus\{0\} \mid|\arg\om|<\pi-\ep\}$ for $0<\ep<\pi/2$.
The following lemma was proved in \cite[Theorem 1.1]{Abels06}.

\begin{lemm}\label{lemm:Abels06}
Let $1<q<\infty$ and $0<\ep<\pi/2$.
Then there exists a positive number $\om_1$ such that,
for every $\la \in \Si_\ep\cup\{\om\in\BC \mid |\om|<\om_1\}$
and $\Bf\in L_q(\Om)^N$,
there is a unique solution $(\Bv,\Fq)\in W_q^2(\Om)^N \times W_q^1(\Om)$ to \eqref{eq:resolvent}.
In addition, 
\begin{equation*}
|\la|\|\Bv\|_{L_q(\Om)}+|\la|^{1/2}\|\nabla\Bv\|_{L_q(\Om)}+\|\Bv\|_{W_q^2(\Om)}
+\|\Fq\|_{W_q^1(\Om)} 	\leq C\|\Bf\|_{L_q(\Om)}
\end{equation*}
for any $\la \in \Si_\ep\cup\{\om\in\BC \mid |\om|<\om_1\}$
with a positive constant $C=C_{N,d,q,\ep,\mu,\om_1}$.
\end{lemm}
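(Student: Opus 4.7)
The plan is to split the resolvent parameter set $\Si_\ep\cup\{|\om|<\om_1\}$ into a sector tail $\{\la\in\Si_\ep:|\la|\geq R\}$ for some large $R>0$ and a disc $\{|\la|<\om_1\}$ near the origin, and to treat the two regimes by different techniques before patching. For $|\la|\geq R$ I would run the classical localization procedure: first solve the model resolvent problems on $\ws$ (via Helmholtz projection and Fourier multipliers), on $\uhs$ with traction condition $\BT(\Bv,\Fq)\Be_N=0$ on $\bdry$, and on $\lhs$ with Dirichlet condition. Each model system reduces, after the partial Fourier transform in $x'$, to second-order ODEs in $x_N$ whose symbols are products of $e^{-A(\xi',\la)x_N}$ (with $\mathrm{Re}\,A\simeq(|\xi'|^2+|\la|)^{1/2}$) and rational functions of $\xi'$ and $\la^{1/2}$, all satisfying Mikhlin-type bounds uniform in $\la\in\Si_\ep$ in the spirit of \eqref{170525_1}. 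A finite covering of $\Om$ by interior balls and by collar neighborhoods of $\Ga$ and $S$, with a subordinate partition of unity $\{\ph_j\}$, then yields a candidate $\sum_j\ph_j\Bv_j$; the commutator terms $[\Di\BT,\ph_j]$ are first order in space and thus lower order in $\la$, and can be absorbed in a Neumann series once $|\la|\geq R$.

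The main obstacle is the disc $\{|\la|<\om_1\}$: on the infinite layer, $\la=0$ typically sits on the boundary of the spectrum of the Stokes operator, and the model-problem symbols degenerate as $\la\to 0$, so the large-$|\la|$ scheme fails uniformly. My strategy is to solve the stationary problem first and then treat small $\la$ as a perturbation. Using Proposition \ref{prop:decomp} to split $\Bf=P_q\Bf+\nabla Q_q\Bf$ and absorbing $\nabla Q_q\Bf$ into the pressure, I would construct $(\Bv_0,\Fq_0)\in W_q^2(\Om)^N\times W_q^1(\Om)$ satisfying
\begin{equation*}
-\Di\BT(\Bv_0,\Fq_0)=\Bf,\quad \di\Bv_0=0\ \text{in}\ \Om,\quad \BT(\Bv_0,\Fq_0)\Be_N=0\ \text{on}\ \Ga,\quad \Bv_0=0\ \text{on}\ S,
\end{equation*}
by the same cover-and-cutoff scheme applied to the classical stationary Stokes systems on $\uhs$ and $\lhs$ (whose $L_q$-theory is standard). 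With this stationary solution in hand, for $0<|\la|<\om_1$ I would set $\Bv=\Bv_0+\Bw$, $\Fq=\Fq_0+\Fr$, reducing \eqref{eq:resolvent} to a resolvent system for $(\Bw,\Fr)$ with forcing $-\la(\Bv_0+\Bw)$ and homogeneous boundary data. Since $\|\la\Bv_0\|_{L_q(\Om)}\leq C|\la|\,\|\Bf\|_{L_q(\Om)}$, this becomes a fixed-point problem in $W_q^2(\Om)^N\times W_q^1(\Om)$ that contracts as soon as $\om_1$ is chosen small enough, delivering simultaneously existence and the claimed estimate on $\{|\la|<\om_1\}$.

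To patch the two regions, the key observation is that $(\la-A_q)^{-1}$ is operator-analytic wherever it exists; the two open sets above, one containing a punctured neighborhood of $0$ and the other the tail of $\Si_\ep$, extend by analytic continuation to the whole connected set $\Si_\ep\cup\{|\om|<\om_1\}$, after possibly shrinking $\om_1$ and refining the partition of unity to reduce $R$ so that the regions overlap and the resolvent bounds glue consistently. Uniqueness on $\Si_\ep$ follows by pairing the homogeneous equation with $\Bv$ itself, integrating by parts using $\di\Bv=0$ together with the boundary conditions, and concluding $\Bv\equiv 0$ when $\mathrm{Re}\,\la>0$; uniqueness in the remainder of the resolvent set then follows by analyticity and by the uniqueness clause of Proposition \ref{prop:weakDN} applied to the associated pressure. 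The truly delicate step is the stationary solvability near $\la=0$, which exploits the absence of gravity in \eqref{eq:resolvent} and is what ultimately makes the neighborhood of the origin lie in the resolvent set.
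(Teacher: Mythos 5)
The paper does not prove this lemma at all: its proof is a one-line citation of \cite[Theorem 1.1]{Abels06}, where Abels establishes the resolvent estimates for the (generalized, reduced) Stokes operator in an infinite layer with mixed free-slip/Dirichlet boundary conditions by a direct analysis of the layer problem via partial Fourier transform in $x'$ and explicit symbol estimates for the resulting ODE systems in $x_N\in(0,d)$. Your proposal attempts a self-contained proof by a localization-and-perturbation scheme, which is a genuinely different route; unfortunately it has two gaps that would need to be filled before it could replace the citation.

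First, the solvability of the stationary problem on $\Om$ is the crux of the lemma (it is exactly what makes the disc $\{|\om|<\om_1\}$ lie in the resolvent set), and it cannot be obtained by the cover-and-cutoff argument as you describe. The model problems on $\uhs$ and $\lhs$ do \emph{not} admit a bound $\|\Bv\|_{W_q^2}\leq C\|\Bf\|_{L_q}$ at $\la=0$: the half-space Stokes operator with traction or no-slip conditions has $0$ on the boundary of its spectrum (no Poincar\'e inequality), so the half-space solution operators degenerate there and the commutator/remainder terms in the localization cannot be absorbed by a Neumann series when $\la=0$ (there is no small parameter). The reason the stationary problem \emph{is} solvable on the infinite layer is precisely the finite width $d$: the one-dimensional Poincar\'e inequality in $x_N$ produces a spectral gap, and this feature is destroyed the moment you localize to a half-space. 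Any correct proof of stationary solvability on the layer must exploit the finite width directly (e.g.\ via the partial Fourier transform analysis in $x_N\in(0,d)$ as in Abels or in \cite{Saito15b}), not reduce to half-space models.

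Second, the patching step is not sound. Your two regimes cover $\{|\la|<\om_1\}$ and $\{\la\in\Si_\ep:\ |\la|\geq R\}$, leaving the annular region $\{\la\in\Si_\ep:\ \om_1\leq|\la|\leq R\}$ uncontrolled. Refining the partition of unity does reduce the commutator constants somewhat, but it does not drive the Neumann-series threshold $R$ below an arbitrary fixed $\om_1$, and shrinking $\om_1$ shrinks the disc, so the regions do not overlap. Analytic continuation also does not help here: the resolvent is analytic only on the resolvent set, and you have not shown that the intermediate annulus is contained in it; analytic continuation of an operator-valued function through a region cannot conjure solvability (let alone uniform bounds) where it has not been established. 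The annular region must be handled by the same explicit ODE/symbol analysis, with estimates uniform in $\la$ over compact sets of $\overline{\Si_\ep}\setminus\{0\}$.
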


Let $1<q<\infty$ and $q'=q/(q-1)$.
By Proposition \ref{prop:weakDN}, we see that, 
for any $\Bf \in L_q(\Om)^N$ and $g\in W_q^{1-1/q}(\Ga)$,
there exists a unique solution $\wt{\Fq}\in W_{q,\Ga}^1(\Om)$ to the variational problem:
\begin{equation*}
(\nabla\wt{\Fq},\ph)_\Om
= (\Bf-\nabla\wt{g},\nabla\ph)_\Om \quad
\text{for all $\ph\in W_{q',\Ga}^1(\Om)$},
\end{equation*}
where $\wt{g}$ is an extension of $g$ satisfying
$\|\wt{g}\|_{W_q^1(\Om)}\leq C\|g\|_{W_q^{1-1/q}(\Ga)}$ for some positive constant $C$ independent of $g$, $\wt g$.
Furthermore, the solution $\wt{\Fq}$ satisfies % the estimate:
\begin{align*}
\|\wt{\Fq}\|_{W_q^1(\Om)}
&\leq C_{N,d,q}(\|\Bf\|_{L_q(\Om)}+\|\nabla\wt{g}\|_{L_q(\Om)}) \\
&\leq C_{N,d,q}(\|\Bf\|_{L_q(\Om)}+\|g\|_{W_q^{1-1/q}(\Ga)}).
\end{align*}
Setting $\Fq=\wt{\Fq}+\wt{g}\in W_{q,\Ga}^1(\Om)+W_q^1(\Om)$ yields that
\begin{equation*}
(\nabla\Fq,\nabla\ph)_\Om = (\Bf,\nabla\ph)_\Om \quad
\text{for all $\ph\in W_{q',\Ga}^1(\Om)$}, \quad
\Fq =g \quad \text{on $\Ga$},
\end{equation*}
and also $\|\Fq\|_{W_q^1(\Om)}\leq C_{N,d,q}(\|\Bf\|_{L_q(\Om)}+\|g\|_{W_q^{1-1/q}(\Ga)})$.
From this viewpoint,
we define an operator $K: W_q^2(\Om)^N \ni \Bv \mapsto K(\Bv)\in W_{q,\Ga}^1(\Om)+W_q^1(\Om)$ as follows:
\begin{align*}
&(\nabla K(\Bv),\nabla\ph)_\Om =
(\Di(\mu\BD(\Bv))-\nabla\di\Bv,\nabla\ph)_\Om \quad
\text{for all $\ph\in W_{q',\Ga}^1(\Om)$}, \\
&K(\Bv) = \Be_N\cdot(\mu\BD(\Bv)\Be_N)-\di\Bv \quad \text{on $\Ga$}.
\end{align*}
Then $K(\Bv)$ satisfies $\|K(\Bv)\|_{W_q^1(\Om)}\leq C_{N,d,q,\mu}\|\Bv\|_{W_q^2(\Om)}$
with some positive constant $C_{N,q,d,\mu}$ independent of $\Bv$ and $\ph$.

At this point, we introduce a result concerning
the weak Dirichlet-Neumann problem with resolvent parameter $\la$.

\begin{prop}\label{prop:weakDN_re}
Let $1<q<\infty$ and $0<\ep<\pi/2$.
Then there exists a positive number $\om_2$ such that,
for every $\la\in\Si_\ep\cup\{\om\in\BC\mid|\om|<\om_2\}$
and $\Bf\in L_q(\Om)^N$,
there is a unique solution $u\in W_{q,\Ga}^1(\Om)$
to the variational problem:
\begin{equation}\label{170224_3}
(\la u,\ph)_\Om+(\nabla u,\nabla\ph)_\Om=(\Bf,\nabla\ph)_\Om
\quad \text{for all $\ph\in W_{q',\Ga}^1(\Om)$,}
\end{equation}
where $q'=q/(q-1)$. 
\end{prop}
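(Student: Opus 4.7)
My plan is to split the parameter region $\Si_\ep \cup \{|\om|<\om_2\}$ into the disk $\{|\la|<\om_2\}$ and the remainder of the sector $\Si_\ep$, and to treat each piece by a different method: a contraction argument based on Proposition \ref{prop:weakDN} for small $|\la|$, and a Fourier-analytic construction patterned after the proof of Proposition \ref{prop:weakDN} for $\la$ in the sector. The overall idea is that the term $(\la u,\ph)_\Om$ is a relatively compact perturbation of the $\la = 0$ problem when $|\la|$ is small, while for $\la$ of arbitrary size in a proper subsector the resolvent kernel can be written down explicitly via partial Fourier transforms.

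For the disk I would perturb from Proposition \ref{prop:weakDN} as follows. Since every $\ph \in \wh W_{q',\Ga}^1(\Om)$ vanishes on $\Ga$ and $\Om$ has thickness $d$ in the $x_N$-direction, Poincar\'e's inequality yields $\|\ph\|_{L_{q'}(\Om)} \le C_{d,q}\|\nabla\ph\|_{L_{q'}(\Om)}$. Hence for every $u \in W_{q,\Ga}^1(\Om)$ the functional $\ph \mapsto -(\la u,\ph)_\Om$ lies in $\wh W_{q,\Ga}^{-1}(\Om)$ with norm at most $C_{d,q}|\la|\|u\|_{L_q(\Om)}$, and the Hahn--Banach construction recalled just before Theorem \ref{theo:main2} produces, linearly in $u$, a $\BG[u] \in L_q(\Om)^N$ with
\begin{equation*}
(\la u,\ph)_\Om = -(\BG[u],\nabla\ph)_\Om, \qquad \|\BG[u]\|_{L_q(\Om)^N/J_q(\Om)} \le C_{d,q}|\la|\|u\|_{L_q(\Om)}.
\end{equation*}
The variational identity \eqref{170224_3} then rewrites as the fixed-point equation $u = Q_q(\Bf + \BG[u])$, and by Proposition \ref{prop:weakDN} this defines an affine self-map of $W_{q,\Ga}^1(\Om)$ with Lipschitz constant at most $C_{N,d,q}|\la|$. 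Choosing $\om_2$ so that $C_{N,d,q}\om_2 < 1$ makes the map a contraction, and Banach's fixed-point theorem furnishes the unique solution.

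For $\la \in \Si_\ep$ I would mimic the Fourier construction in the proof of Proposition \ref{prop:weakDN}, replacing the Laplacian symbol $|\xi|^2$ by the resolvent symbol $\la + |\xi|^2$. By density take $\Bf \in C_0^\infty(\Om)^N$; integration by parts in \eqref{170224_3} converts the variational identity to the strong problem $\la u - \Delta u = -\di\Bf$ in $\Om$ with $u|_\Ga = 0$ and $\pa_N u|_S = 0$. Extending $\Bf$ to $\ws$ by the mixed odd/even rule \eqref{170223_1} to form $\BF$ and setting
\begin{equation*}
v = -\CF_\xi^{-1}\left[\frac{\CF[\di\BF](\xi)}{\la + |\xi|^2}\right],
\end{equation*}
one obtains $v \in W_q^2(\ws)$ with $\pa_N v|_S = 0$ (by parity), since the symbols $1/(\la+|\xi|^2)$, $\xi_j/(\la+|\xi|^2)$, and $\xi_k\xi_l/(\la+|\xi|^2)$ satisfy Mikhlin bounds on $\Si_\ep$ with constants depending on $\ep$. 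I then define $w$ on $\Om$ as the solution of $\la w - \Delta w = 0$ with boundary data $w|_\Ga = -v|_\Ga$, $\pa_N w|_S = 0$; the partial Fourier transform in $x'$ gives the explicit formula
\begin{equation*}
\wh w(\xi',x_N) = -\frac{\cosh(\mu(\xi',\la)\,x_N)}{\cosh(\mu(\xi',\la)\,d)}\,\wh v(\xi',d), \qquad \mu(\xi',\la) = \sqrt{|\xi'|^2 + \la},
\end{equation*}
the square root chosen with positive real part (well defined on $\Si_\ep$). Inserting the cut-off $\ph(y_N)$ from the proof of Proposition \ref{prop:weakDN} and applying Lemma \ref{est:w} to the resulting multiplier operators yields $w \in W_q^1(\Om)$, and $u = v + w$ solves the strong problem. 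Uniqueness on the full parameter set would follow by elliptic regularity plus reflections across $S$ (even) and $\Ga$ (odd) that extend a homogeneous weak solution to a periodic $L_q$-function on $\ws$ whose $x_N$-Fourier modes solve scalar resolvent equations on $\tws$ with nonvanishing symbols $\la + (n\pi/(2d))^2 + |\xi'|^2$ (once $\om_2 < \pi^2/(4d^2)$), forcing $u \equiv 0$.

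The main obstacle I foresee is the uniform symbol analysis for the sector construction: establishing Bell-type derivative estimates on $\mu(\xi',\la) = \sqrt{|\xi'|^2 + \la}$ and on $\cosh(\mu x_N)/\cosh(\mu d)$ with the attached $\xi_j/\mu$ factors uniformly for $\la \in \Si_\ep$ requires extending \eqref{170525_1} and \eqref{170525_3}, carefully treating both the regime $\xi'\to 0$ (where $\mu \to \sqrt{\la}$) and the regime $|\xi'|\to\infty$ (where $\mu \sim |\xi'|$). Once these multiplier estimates are in place, the rest of the sector construction runs as in Proposition \ref{prop:weakDN}, and the perturbative disk argument is essentially automatic from Proposition \ref{prop:weakDN}.
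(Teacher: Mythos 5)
Your overall strategy---perturbing from Proposition~\ref{prop:weakDN} for small $|\la|$ and redoing the Fourier construction with the resolvent symbol $\la+|\xi|^2$ for $\la$ in the sector---is exactly what the paper's (quite terse) proof sketches. The contraction argument for the disk is a correct implementation of the paper's ``small perturbation method'': the Poincar\'e inequality in the $\xi_N$-direction, available because $W_{q',\Ga}^1(\Om)$ imposes vanishing on $\Ga$ and the layer has finite thickness $d$, is precisely what makes $\ph\mapsto -(\la u,\ph)_\Om$ a small element of $\wh W_{q,\Ga}^{-1}(\Om)$, and the fixed-point map $u\mapsto Q_q(\Bf+\BG[u])$ is well defined because $Q_q$ depends only on the coset in $L_q(\Om)^N/J_q(\Om)$. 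The sector construction, with $\mu(\xi',\la)=\sqrt{\la+|\xi'|^2}$ and the $\cosh(\mu x_N)/\cosh(\mu d)$ kernel, matches the paper's pointer to a construction ``in a similar way to Proposition~\ref{prop:weakDN} (cf.\ also \cite{Saito15b})''; you correctly identify that the remaining work is the $\la$-uniform Mikhlin/Bell derivative bounds on these symbols, which is precisely what \cite{Saito15b} supplies.

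Where you genuinely depart from the paper is the uniqueness step. The paper disposes of it by the same duality argument used in the proof of Proposition~\ref{prop:weakDN}: given a homogeneous weak solution $u\in W_{q,\Ga}^1(\Om)$ and any $\Phi\in C_0^\infty(\Om)^N$, solve the variational problem with exponent $q'$ and right-hand side $\Phi$, pair the two variational identities using the symmetry of both $(\la\,\cdot,\cdot)_\Om$ and $(\nabla\cdot,\nabla\cdot)_\Om$, and conclude $(\Phi,\nabla u)_\Om=0$; hence $u$ is constant and vanishes because $u|_\Ga=0$. You instead propose even/odd reflection across $S$ and $\Ga$ followed by a Fourier-mode argument on the periodic extension. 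Both routes work, but the duality route is shorter, needs no elliptic-regularity upgrade to justify the reflected PDE, and is already available since your construction proves existence for every exponent in $(1,\infty)$. The reflection route requires (i) a bootstrap of the weak solution so the extended function satisfies the extended equation distributionally, and (ii) care in arguing that a tempered distribution annihilated by a nowhere-vanishing symbol vanishes, for each $x_N$-mode of an $L_q$ function that is merely periodic. One small slip on that path: the mixed Dirichlet($\Ga$)/Neumann($S$) eigenvalues in $x_N$ are $\bigl((2k+1)\pi/(2d)\bigr)^2$, $k\ge 0$, not $(n\pi/(2d))^2$; their minimum is $\pi^2/(4d^2)$, so your threshold $\om_2<\pi^2/(4d^2)$ is tuned to the correct quantity even though the displayed formula misses the odd-integer factor.
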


\begin{proof}
%The uniqueness follows from the existence of solutions for a dual problem
%as was seen in Proposition \ref{prop:weakDN},
%so that we prove the existence of solutions for \eqref{170224_3} in what follows.
The case $\la=0$ was already proved in Proposition \ref{prop:weakDN}.
Then, by a small perturbation method,
we can prove that there exists a positive constant $\om_2$
such that, for every $\la\in\{\om\in\BC\mid|\om|<\om_2\}$ and
$\Bf\in L_q(\Om)^N$, %the variational problem
\eqref{170224_3} admits a unique solution $u\in W_{q,\Ga}^1(\Om)$.
In the case $\la\in\Si_\ep$ with $|\la|\geq\om_2/2$,
we consider the strong problem with resolvent parameter $\la$ for \eqref{eq:strong}.
One can construct solutions to the strong problem 
in a similar way to Proposition \ref{prop:weakDN} (cf. also \cite{Saito15b}). 
The uniqueness follows from the existence of solutions for a dual problem
as was seen in the proof of Proposition \ref{prop:weakDN}.
This completes the proof of the proposition.
\end{proof}

We now consider the reduced resolvent problem: 
\begin{equation}\label{eq:reduced}
\left\{\begin{aligned}
\la\Bv-\Di\BT(\Bv,K(\Bv)) &= \Bf && \text{in $\Om$,} \\
\BT(\Bv,K(\Bv))\Be_N &= 0 && \text{on $\Ga$,} \\
\Bv &= 0 && \text{on $S$}.
\end{aligned}\right.
\end{equation}
Then the following proposition holds.

\begin{prop}\label{prop:equiv}
Let $1<q<\infty$ and $0<\ep<\pi/2$.
Assume that $\om_1$ and $\om_2$ are, respectively, the same positive constants as in 
Lemma $\ref{lemm:Abels06}$ and Proposition $\ref{prop:weakDN_re}$,
and set $\om_0=\min(\om_1,\om_2)$.
Then \eqref{eq:resolvent} is equivalent to \eqref{eq:reduced}
for every $\la \in \Si_\ep \cup \{\om\in\BC \mid |\om|<\om_0\}$ and $\Bf \in J_q(\Om)$,
which means that the following assertions hold true:
$(\Bv,\Fq)=(\Bv,K(\Bv)) \in W_q^2(\Om)^N \times W_q^1(\Om)$ is a unique solution to \eqref{eq:resolvent}
if $\Bv \in W_q^2(\Om)^N$ is a solution to \eqref{eq:reduced},
and conversely, $\Bv \in W_q^2(\Om)^N$ is a unique solution to \eqref{eq:reduced}
if $(\Bv,\Fq) \in W_q^2(\Om)^N\times W_q^1(\Om)$ is a solution to \eqref{eq:resolvent}.
\end{prop}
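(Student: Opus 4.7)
The equivalence is two directions, and both hinge on the uniqueness assertions of Proposition~\ref{prop:weakDN} (or its resolvent version Proposition~\ref{prop:weakDN_re}). I split the argument into the ``forward'' implication (a solution to \eqref{eq:resolvent} yields one to \eqref{eq:reduced} with the same $\Bv$) and the ``backward'' implication (a solution to \eqref{eq:reduced} satisfies $\di\Bv=0$ automatically, hence solves \eqref{eq:resolvent} with $\Fq=K(\Bv)$).

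\textbf{Forward direction.} Assume $(\Bv,\Fq)\in W_q^2(\Om)^N\times W_q^1(\Om)$ solves \eqref{eq:resolvent}. Using $\di\Bv=0$ we rewrite $\Di\BT(\Bv,\Fq)=\Di(\mu\BD(\Bv))-\nabla\Fq$, so the momentum equation becomes $\nabla\Fq=\Di(\mu\BD(\Bv))-\la\Bv+\Bf$. Testing against $\nabla\ph$ for an arbitrary $\ph\in W_{q',\Ga}^1(\Om)$ and integrating by parts, one uses that (i) $\Bf\in J_q(\Om)$ gives $(\Bf,\nabla\ph)_\Om=0$, and (ii) $(\Bv,\nabla\ph)_\Om=-(\di\Bv,\ph)_\Om+\int_{\pa\Om}\ph\,\Bv\cdot\Bn\,dS=0$, since $\di\Bv=0$ in $\Om$, $\ph=0$ on $\Ga$, and $\Bv=0$ on $S$. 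This leaves
\begin{equation*}
(\nabla\Fq,\nabla\ph)_\Om=(\Di(\mu\BD(\Bv)),\nabla\ph)_\Om\quad\text{for all }\ph\in W_{q',\Ga}^1(\Om),
\end{equation*}
which, together with $\di\Bv=0$, is precisely the variational problem defining $K(\Bv)$. For the boundary condition, contracting $\BT(\Bv,\Fq)\Be_N=0$ with $\Be_N$ yields $\Fq=\Be_N\cdot(\mu\BD(\Bv)\Be_N)$ on $\Ga$, which agrees with $K(\Bv)|_\Ga=\Be_N\cdot(\mu\BD(\Bv)\Be_N)-\di\Bv$ because $\di\Bv=0$. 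By the uniqueness part of Proposition~\ref{prop:weakDN} (applied to $\Fq-K(\Bv)$), we conclude $\Fq=K(\Bv)$, so $\Bv$ solves \eqref{eq:reduced}.

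\textbf{Backward direction.} This is the more delicate step. Let $\Bv\in W_q^2(\Om)^N$ solve \eqref{eq:reduced} and set $\te:=\di\Bv$; the goal is $\te\equiv 0$, since then $(\Bv,K(\Bv))$ evidently solves \eqref{eq:resolvent}. Pair the first equation of \eqref{eq:reduced} with $\nabla\ph$ for $\ph\in W_{q',\Ga}^1(\Om)$ and use the definition of $K(\Bv)$ to cancel the term $(\Di(\mu\BD(\Bv)),\nabla\ph)_\Om-(\nabla K(\Bv),\nabla\ph)_\Om=(\nabla\te,\nabla\ph)_\Om$, together with $(\Bf,\nabla\ph)_\Om=0$ and $(\Bv,\nabla\ph)_\Om=-(\te,\ph)_\Om$. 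This yields
\begin{equation*}
\la(\te,\ph)_\Om+(\nabla\te,\nabla\ph)_\Om=0\quad\text{for all }\ph\in W_{q',\Ga}^1(\Om).
\end{equation*}
Next one checks $\te|_\Ga=0$: testing $\Be_N$ against $\BT(\Bv,K(\Bv))\Be_N=0$ gives $K(\Bv)=\Be_N\cdot(\mu\BD(\Bv)\Be_N)$ on $\Ga$, which compared to the boundary trace in the definition of $K(\Bv)$ forces $\te=\di\Bv=0$ on $\Ga$. Thus $\te\in W_{q,\Ga}^1(\Om)$ solves the homogeneous weak resolvent problem \eqref{170224_3} with $\Bf=0$; since $\la\in\Si_\ep\cup\{|\om|<\om_0\}\subset\Si_\ep\cup\{|\om|<\om_2\}$, Proposition~\ref{prop:weakDN_re} yields $\te\equiv 0$, i.e.\ $\di\Bv=0$. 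Uniqueness in each direction is then transferred from the unique solvability of \eqref{eq:resolvent} given by Lemma~\ref{lemm:Abels06}.

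\textbf{Main obstacle.} The one delicate point I expect to have to handle carefully is the backward direction, specifically the identification that $\te=\di\Bv$ solves the homogeneous weak resolvent problem and lies in $W_{q,\Ga}^1(\Om)$. Matching the ``$K(\Bv)|_\Ga$'' trace against the stress-free condition to extract the Dirichlet datum $\te|_\Ga=0$, and writing the algebraic cancellation cleanly so that the $\Bf\in J_q(\Om)$ hypothesis is used exactly once, is where the restriction $\Bf\in J_q(\Om)$ and the joint admissible set $\la\in\Si_\ep\cup\{|\om|<\om_0\}$ both come into play; everything else is bookkeeping.
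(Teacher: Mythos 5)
Your proof is correct and follows essentially the same route as the paper: both directions reduce to the uniqueness assertions of Propositions~\ref{prop:weakDN} and \ref{prop:weakDN_re}, with the divergence-free property of $\Bv$ in the backward direction extracted exactly as the paper does (the trace identity $K(\Bv)|_\Ga=\Be_N\cdot(\mu\BD(\Bv)\Be_N)-\di\Bv$ matched against the normal component of the stress-free condition to force $\di\Bv=0$ on $\Ga$, then the variational identity $\la(\di\Bv,\ph)_\Om+(\nabla\di\Bv,\nabla\ph)_\Om=0$ together with Proposition~\ref{prop:weakDN_re}). The only deviation is that you present the two implications in the opposite order from the paper, which is immaterial.
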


%\begin{rema}
%The uniqueness holds for \eqref{eq:reduced} by Lemma \ref{lemm:Abels06} and Proposition \ref{prop:equiv}.
%\end{rema}

\begin{proof}%[Proof of Proposition $\ref{prop:equiv}$]
Suppose that $\Bv \in W_q^2(\Om)^N$ is a solution to \eqref{eq:reduced}.
Let $\ph\in W_{q',\Ga}^1(\Om)$ with $1/q + 1/q' = 1$.
We then see, by the definition of $K$, that $\di\Bv= 0$ on $\Ga$ and
\begin{align*}
0 &= -(\Bf,\nabla\ph)_\Om
= -(\la\Bv-\Di\BT(\Bv,K(\Bv)),\nabla\ph)_\Om
= -(\la\Bv-\nabla\di\Bv,\nabla\ph)_\Om \\
&= (\la\di\Bv,\ph)_\Om+(\nabla\di\Bv,\nabla\ph)_\Om.
\end{align*}
Hence, $\di\Bv=0$ by Proposition \ref{prop:weakDN_re} when
$\la\in \Si_\ep \cup \{\om \in \BC \mid |\om|<\om_0\}$,
and therefore setting $\Fq=K(\Bv)$ implies that $(\Bv,\Fq)\in W_q^2(\Om)^N\times W_q^1(\Om)$ solves \eqref{eq:resolvent}.
The uniqueness of solutions to \eqref{eq:resolvent} follows from Lemma \ref{lemm:Abels06}.

Next we show the opposite direction.
Suppose that $(\Bv,\Fq) \in W_q^2(\Om)^N\times W_q^1(\Om)$ is a solution to \eqref{eq:resolvent}.
Let $\ph\in W_{q',\Ga}^1(\Om)$, and then we see, by the definition of $K$ and $\di\Bv=0$ in $\Om$, that
\begin{align*}
0 &= (\Bf,\nabla\ph)_\Om = (\nabla\Fq-\Di(\mu\BD(\Bv)),\nabla\ph)_\Om
= (\nabla(\Fq-K(\Bv)),\nabla\ph)_\Om, \\
0 &= \Fq-K(\Bv) \quad \text{on $\Ga$},
\end{align*}
where we have used $(\la\Bv,\nabla\ph)_\Om=0$ by $\di\Bv=0$ in $\Om$ and $\Bv=0$ on $S$.
Combining these two equations with the uniqueness of Proposition \ref{prop:weakDN} implies $\Fq=K(\Bv)$.
Thus $\Bv$ is a solution to \eqref{eq:reduced}. 
The uniqueness of solutions to \eqref{eq:reduced} follows from the first half of this proof,
which completes the proof of the proposition.  
\end{proof}

In view of \eqref{eq:reduced}, we set the Stokes operator $A_q$
as $A_q\Bv = \Di\BT(\Bv,K(\Bv))$ with the domain $D(A_q)$:
\begin{align}\label{domain}
&D(A_q) = W_{q,\CB}^2(\Om)\cap J_q(\Om), \\
&W_{q,\CB}^2(\Om)=
\{\Bv \in W_q^2(\Om)^N \mid (\mu\BD(\Bv)\Be_N)_\tau=0 \text{ on $\Ga$,} \quad \Bv = 0 \text{ on $S$}\}.
\notag
\end{align}
%where we recall that $\Bw_\tau=\Bw-\Be_N(\Be_N\cdot\Bw)$ for any $N$-vector $\Bw$.
The system \eqref{eq:reduced} then can be written as $\la\Bv-A_q\Bv=\Bf$,
and one has

\begin{lemm}\label{lemm:Aq}
Let $1<q<\infty$ and $0<\ep<\pi/2$. 
Suppose that $\om_0$ is the same positive constant as in Proposition $\ref{prop:equiv}$.
Then there exists a positive constant $C=C_{N,d,q,\ep,\mu,\om_1,\om_2}$ such that,
for every $\la\in\Si_\ep \cup \{\om\in\BC \mid |\om|<\om_0\}$,
\begin{equation*}
\|(\la - A_q)^{-1}\|_{\CL(J_q(\Om))} \leq \frac{C}{1+|\la|}.
\end{equation*}
In addition, $A_q$ is a densely defined closed operator on $J_q(\Om)$. 
\end{lemm}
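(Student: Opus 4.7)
The plan is to bootstrap the resolvent estimate for the full Stokes system \eqref{eq:resolvent} provided by Lemma \ref{lemm:Abels06} and translate it into a bound on $(\la-A_q)^{-1}$ via the equivalence of Proposition \ref{prop:equiv}. Fix $\la\in\Si_\ep\cup\{\om\in\BC\mid |\om|<\om_0\}$ and $\Bf\in J_q(\Om)$. Lemma \ref{lemm:Abels06} delivers a unique $(\Bv,\Fq)\in W_q^2(\Om)^N\times W_q^1(\Om)$ solving \eqref{eq:resolvent}. I would first check that $\Bv\in D(A_q)$: the condition $\Bv=0$ on $S$ is explicit in \eqref{eq:resolvent}, while the relation $\BT(\Bv,\Fq)\Be_N=\mu\BD(\Bv)\Be_N-\Fq\Be_N=0$ on $\Ga$ shows that the tangential part $(\mu\BD(\Bv)\Be_N)_\tau$ vanishes on $\Ga$, since the pressure contribution is purely normal. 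To verify $\Bv\in J_q(\Om)$, I would test against $\ph\in W_{q',\Ga}^1(\Om)$ and integrate by parts:
\begin{equation*}
(\Bv,\nabla\ph)_\Om=-(\di\Bv,\ph)_\Om+(\Bv\cdot\Bn,\ph)_{\pa\Om}.
\end{equation*}
The volume term vanishes by $\di\Bv=0$, the $\Ga$-boundary term by $\ph|_\Ga=0$, and the $S$-boundary term by $\Bv|_S=0$.

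Proposition \ref{prop:equiv} then forces $\Fq=K(\Bv)$, so $\Bv$ solves the reduced problem \eqref{eq:reduced}, i.e. $(\la-A_q)\Bv=\Bf$, and uniqueness carries over from the same proposition. Hence $(\la-A_q)^{-1}\colon J_q(\Om)\to D(A_q)$ is well defined. For the resolvent bound I would split the two regimes inside the Abels estimate $|\la|\,\|\Bv\|_{L_q}+\|\Bv\|_{W_q^2}\le C\|\Bf\|_{L_q}$: when $|\la|\le 1$, use $\|\Bv\|_{L_q}\le \|\Bv\|_{W_q^2}\le C\|\Bf\|_{L_q}$ together with $(1+|\la|)\le 2$; when $|\la|\ge 1$, use $|\la|\,\|\Bv\|_{L_q}\le C\|\Bf\|_{L_q}$ together with $(1+|\la|)\le 2|\la|$. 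Both cases yield $(1+|\la|)\|(\la-A_q)^{-1}\Bf\|_{L_q(\Om)}\le C\|\Bf\|_{L_q(\Om)}$.

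Closedness of $A_q$ then follows from the existence of a bounded everywhere-defined resolvent at some $\la_0$ (say $\la_0=1\in\Si_\ep$), since $I-(\la_0-A_q)^{-1}A_q$ is the identity on $D(A_q)$. For dense definedness I would invoke the fact that $C_{0,\si}^\infty(\Om):=\{\Bu\in C_0^\infty(\Om)^N\mid \di\Bu=0\}$ is dense in $J_q(\Om)$ (a standard property for layer-like domains; alternatively one can read it off the Helmholtz decomposition in Proposition \ref{prop:decomp}) and that $C_{0,\si}^\infty(\Om)\subset D(A_q)$, since compactly supported functions trivially satisfy the boundary conditions encoded in $W_{q,\CB}^2(\Om)$.

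The main obstacle is the verification that $\Bv\in J_q(\Om)$: one must cleanly separate the two boundary pieces $\Ga$ and $S$ in the integration by parts, and it is exactly the compatibility of $\Bv|_S=0$ with the choice of test space $W_{q',\Ga}^1(\Om)$ (which vanishes on $\Ga$ but not on $S$) that makes the argument close. A secondary subtlety is dense definedness; if one prefers to avoid an external density result for $C_{0,\si}^\infty(\Om)$, a Hahn–Banach duality argument using the adjoint Stokes operator on $J_{q'}(\Om)$ and the bounded resolvent at conjugate exponent $q'$ will work, provided the same range of $\la$ is available on both sides.
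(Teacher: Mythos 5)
The resolvent-estimate half of your argument is correct and essentially matches the paper's (the paper merely says this follows from Lemma~\ref{lemm:Abels06} and Proposition~\ref{prop:equiv} without spelling out the $|\la|\le 1$ / $|\la|\ge 1$ split; your way of filling in the details is the natural one). Your verification that the Abels solution $\Bv$ lands in $D(A_q)$ — boundary conditions plus the integration-by-parts check that $\Bv\in J_q(\Om)$ — is also correct. A minor omission: the paper additionally verifies $A_q\Bv\in J_q(\Om)$ for every $\Bv\in D(A_q)$, which is what makes $\la-A_q\colon D(A_q)\to J_q(\Om)$ a well-defined operator \emph{on} $J_q(\Om)$, so that the word "resolvent" has its usual meaning and your closedness argument can run. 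Your proposal shows only that every $\Bf\in J_q(\Om)$ is attained, not that $J_q(\Om)$ is the codomain of $\la-A_q$.

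The real gap is in dense definedness. You assert that $C_{0,\si}^\infty(\Om):=\{\Bu\in C_0^\infty(\Om)^N\mid\di\Bu=0\}$ is dense in $J_q(\Om)$, calling this a standard property for layer-like domains. That is the standard property for the \emph{classical} solenoidal space with vanishing normal trace on the entire boundary. The present $J_q(\Om)$ is different: the test functions $\ph\in W_{q',\Ga}^1(\Om)$ vanish on $\Ga$ but are free on $S$, so $(\Bf,\nabla\ph)_\Om=0$ constrains $\Bf\cdot\Bn$ only on $S$, while the normal trace on $\Ga$ is unconstrained. Every element of $C_{0,\si}^\infty(\Om)$ has zero normal trace on both boundary components, so its $L_q$-closure is a \emph{proper} subspace of $J_q(\Om)$. (The Helmholtz decomposition of Proposition~\ref{prop:decomp} does not rescue this claim; it says nothing about approximability by compactly supported divergence-free fields.) Concretely, in $N=2$ take a nonconstant $\eta\in C_0^\infty(\BR)$, set $\psi(\xi)=\xi_2\eta(\xi_1)$ and $\Bf=(\pa_2\psi,-\pa_1\psi)=(\eta,-\xi_2\eta')$; a direct check gives $\Bf\in J_q(\Om)$, with $\Bf\cdot\Be_2=-d\eta'\not\equiv 0$ on $\Ga$. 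Now put $p(\xi)=\xi_2\eta'(\xi_1)$, so $\nabla p\in L_{q'}(\Om)^2$ and $(\Bu,\nabla p)_\Om=0$ for all $\Bu\in C_{0,\si}^\infty(\Om)$, yet
\begin{equation*}
(\Bf,\nabla p)_\Om=\frac{d^2}{2}\int_\BR\bigl(\eta\eta''-(\eta')^2\bigr)\,d\xi_1=-d^2\int_\BR(\eta')^2\,d\xi_1\neq 0.
\end{equation*}
Hence $\Bf$ is not in the $L_q$-closure of $C_{0,\si}^\infty(\Om)$, and your primary density claim fails. The duality fallback you mention — working with the adjoint Stokes operator on $J_{q'}(\Om)$ — is the right kind of idea, and it is in the spirit of the argument of \cite[Lemma~3.7]{SS08} that the paper follows for exactly this point; but as written it is only a sketch, and one would still have to establish that $J_q(\Om)^*$ is identified with $J_{q'}(\Om)$ and that the two resolvents are mutual adjoints before the argument closes.
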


\begin{proof}
The required estimate follows from Lemma \ref{lemm:Abels06} and Proposition \ref{prop:equiv}.

Let $\Bv\in D(A_q)$. Then $(\di\Bv,\ph)_\Om=-(\Bv,\nabla\ph)_\Om =0$ for any $\ph\in C_0^\infty(\Om)$,
because $\Bv\in J_q(\Om)$ and $C_0^\infty(\Om)\subset W_{q',\Ga}^1(\Om)$ with $q'=q/(q-1)$.
This implies $\di\Bv=0$ in $\Om$.
It thus holds by the definition of $K$ that
\begin{equation*}
(A_q\Bv,\nabla\ph)_\Om=(\nabla\di\Bv,\nabla\ph)_\Om=0 \quad \text{for all $\ph\in W_{q',\Ga}^1(\Om)$,}
\end{equation*}
which furnishes $A_q\Bv\in J_q(\Om)$.
Then, following the proof of \cite[Lemma 3.7]{SS08},
we can show the last assertion of Lemma \ref{lemm:Aq}.
This completes the proof of the lemma.
\end{proof}

The following proposition follows from Lemma \ref{lemm:Aq}
and the standard theory of analytic $C_0$-semigroups.

\begin{prop}\label{prop:sg}
Let $1<q<\infty$. 
Then $A_q$ generates an analytic $C_0$-semigroup $\{e^{A_q t}\}_{t \geq 0}$ on $J_q(\Om)$.
Furthermore, there exist positive constants $\si_0$ and $C=C_{N,d,q,\mu,\si_0}$ such that for any $t>0$
\begin{alignat*}{2}
\|e^{A_q t}\Ba\|_{J_q(\Om)} &\leq C e^{-2\si_0 t}\|\Ba\|_{J_q(\Om)} && \quad (\Ba\in J_q(\Om)), \\
\|\pa_t e^{A_q t}\Ba\|_{J_q(\Om)} &\leq C t^{-1}e^{-2\si_0 t}\|\Ba\|_{J_q(\Om)} && \quad (\Ba\in J_q(\Om)), \\
\|\pa_t e^{A_q t}\Ba\|_{J_q(\Om)} &\leq C e^{-2\si_0 t}\|\Ba\|_{D(A_q)} && \quad (\Ba\in D(A_q)), 
\end{alignat*}
where %$\|\Ba\|_{J_q(\Om)}=\|\Ba\|_{L_q(\Om)}$ %for $\Ba\in J_q(\Om)$
$\|\Ba\|_{D(A_q)}=\|\Ba\|_{J_q(\Om)}+\|A_q\Ba\|_{J_q(\Om)}$. % for $\Ba\in D(A_q)$.
\end{prop}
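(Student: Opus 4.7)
The plan is to obtain Proposition \ref{prop:sg} from Lemma \ref{lemm:Aq} through the standard Dunford-integral construction, exploiting the fact that the resolvent set of $A_q$ covers a region that is bounded away from the imaginary axis except at infinity in the negative real direction, and even there only past a definite distance from the origin.

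First I would observe that Lemma \ref{lemm:Aq} supplies precisely the two ingredients needed for sectoriality: the resolvent set of $A_q$ contains the sector $\Si_\ep$ of half-opening $\pi-\ep > \pi/2$, and the resolvent admits the bound $\|(\la-A_q)^{-1}\|_{\CL(J_q(\Om))}\leq C/(1+|\la|)$ there. Combined with the density of $D(A_q)$ and the closedness of $A_q$, also guaranteed by Lemma \ref{lemm:Aq}, the classical theory of analytic semigroups yields that $A_q$ generates an analytic $C_0$-semigroup on $J_q(\Om)$ represented by
\[
e^{A_q t}\Ba = \frac{1}{2\pi i}\int_{\Ga}e^{\la t}(\la-A_q)^{-1}\Ba\,d\la \quad (t>0),
\]
where $\Ga$ is any admissible contour in the resolvent set, oriented counter-clockwise around the spectrum.

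For the exponential decay, the key observation is that the complement of $\Si_\ep \cup \{|\om|<\om_0\}$ consists only of points with $|\arg\la-\pi|\leq\ep$ and $|\la|\geq\om_0$, which all satisfy $\mathrm{Re}\,\la\leq -\om_0\cos\ep<0$. I would therefore fix $\ep'\in(\ep,\pi/2)$ and choose $\si_0>0$ small enough that the shifted sector $-2\si_0+\Si_{\ep'}$ is contained in $\Si_\ep \cup \{|\om|<\om_0\}$; this is an elementary planar geometry check, since as one moves along a boundary ray of the shifted sector outward from the vertex $-2\si_0$, the initial short segment stays inside the disk (by smallness of $\si_0$) while the remainder lies in $\Si_\ep$ (because $\ep'>\ep$, so the rays asymptote to directions strictly inside $\Si_\ep$). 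Taking $\Ga$ to be the boundary of $-2\si_0+\Si_{\ep'}$, on its two rays one has $\mathrm{Re}\,\la = -2\si_0 - r\cos\ep'$ with $r=|\la+2\si_0|$, so the resolvent bound yields
\[
\|e^{A_q t}\Ba\|_{J_q(\Om)} \leq \frac{C\|\Ba\|_{J_q(\Om)}}{2\pi}\int_{\Ga}\frac{e^{\mathrm{Re}(\la)t}}{1+|\la|}\,|d\la| \leq Ce^{-2\si_0 t}\|\Ba\|_{J_q(\Om)},
\]
which is the first estimate.

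The second estimate follows by differentiating the Dunford integral under the integral sign; the extra factor of $\la$ in the integrand is absorbed, after the standard rescaling $r\mapsto r/t$ in the parametrization of the rays, into a factor $t^{-1}e^{-2\si_0 t}$. The third estimate follows immediately from the identity $\pa_t e^{A_q t}\Ba = e^{A_q t}A_q\Ba$ (valid for $\Ba\in D(A_q)$ because $A_q$ commutes with its semigroup on its domain) and the first estimate applied to $A_q\Ba \in J_q(\Om)$. The only non-routine point in the entire argument is the geometric verification that the shifted sector $-2\si_0+\Si_{\ep'}$ fits into $\Si_\ep \cup \{|\om|<\om_0\}$, since the resolvent set is not itself a shifted sector but a union of a sector and a disk; once this containment is established, everything reduces to a direct transcription of the well-known sectorial-operator machinery and I expect no additional difficulty.
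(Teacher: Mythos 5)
Your proposal is correct and unpacks exactly the ``standard theory of analytic $C_0$-semigroups'' argument that the paper invokes after Lemma \ref{lemm:Aq}: the shifted-sector containment $-2\si_0+\Si_{\ep'}\subset\Si_\ep\cup\{\om\in\BC\mid|\om|<\om_0\}$ is precisely the geometric fact that lets one extract the exponential weight $e^{-2\si_0 t}$. One small technical caveat: with the \emph{fixed} contour $\pa(-2\si_0+\Si_{\ep'})$ and the denominator $1+|\la|$, the displayed integral actually picks up a harmless $\log(1/t)$ as $t\to 0^+$; this is removed by the usual $t$-dependent contour containing an arc of radius $\sim 1/t$ about the vertex $-2\si_0$ (or, equivalently, by weakening $C/(1+|\la|)$ to $C'/|\la+2\si_0|$ on $-2\si_0+\Si_{\ep'}$ so that the rescaling $r\mapsto r/t$ you invoke for the derivative estimate applies verbatim).
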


Recall $D_{q,p}(\Om)=(J_q(\Om),D(A_q))_{1-1/p,p}$ for $1<p,q<\infty$.
We have a corollary of Proposition \ref{prop:sg}
that can be proved in the same manner as in \cite[Theorem 3.9]{SS08}.

\begin{coro}\label{coro:sg}
Let $1<p,q<\infty$ and
$\si_0$ be the same positive constant as in Proposition $\ref{prop:sg}$.
Then, for every $\Ba\in D_{q,p}(\Om)$,
%\begin{equation*}
$(\Bu,\Fp)=(e^{A_q t}\Ba,K(e^{A_q t}\Ba))$ %\in W_q^2(\Om)^N\times (W_{q,\Ga}^1(\Om)+W_q^1(\Om))
%\end{equation*}
is a unique solution to \eqref{eq:sg}, and also % In addition, 
\begin{equation*}
\|e^{\si_0 t}(\pa_t\Bu,\Bu,\nabla\Bu,\nabla^2\Bu)\|_{L_p(\BR_+,L_q(\Om))}
+\|e^{\si_0 t}\Fp\|_{L_p(\BR_+,W_q^1(\Om))} \leq C\|\Ba\|_{D_{q,p}(\Om)}
\end{equation*}
for a positive constant $C=C_{N,d,p,q,\mu,\si_0}$.
\end{coro}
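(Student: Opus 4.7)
The plan is to exhibit $(\Bu,\Fp)=(e^{A_q t}\Ba,K(e^{A_q t}\Ba))$ as a candidate solution, verify it satisfies \eqref{eq:sg} via Proposition \ref{prop:equiv}, and then extract the weighted maximal regularity bound by combining the analyticity and exponential decay of $e^{A_q t}$ from Proposition \ref{prop:sg} with the real-interpolation characterization of $D_{q,p}(\Om)$.

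First I would verify existence. Because $\Ba\in D_{q,p}(\Om)\subset J_q(\Om)$, Proposition \ref{prop:sg} gives $\Bu(t):=e^{A_q t}\Ba\in D(A_q)$ for each $t>0$, with $\pa_t\Bu=A_q\Bu=\Di\BT(\Bu,K(\Bu))$ by the definition of $A_q$. The inclusion $\Bu(t)\in J_q(\Om)$ yields $\di\Bu=0$, the membership $\Bu(t)\in W_{q,\CB}^2(\Om)$ gives $(\mu\BD(\Bu)\Be_N)_\tau=0$ on $\Ga$ and $\Bu=0$ on $S$, and the defining identity $K(\Bu)=\Be_N\cdot(\mu\BD(\Bu)\Be_N)-\di\Bu$ on $\Ga$ combined with $\di\Bu=0$ shows that the normal component of $\BT(\Bu,\Fp)\Be_N$ vanishes on $\Ga$; hence $\BT(\Bu,K(\Bu))\Be_N=0$ there. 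Setting $\Fp=K(\Bu)$ therefore solves \eqref{eq:sg}, with $\Bu(t)\to\Ba$ in $J_q(\Om)$ as $t\to0+$ by $C_0$-continuity of the semigroup.

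The weighted estimate is obtained by the standard shift trick. Let $B_q:=A_q+\si_0 I$, so that $e^{B_q t}=e^{\si_0 t}e^{A_q t}$ and $D(B_q)=D(A_q)$ with equivalent graph norms. Proposition \ref{prop:sg} then yields $\|e^{B_q t}\Ba\|_{J_q(\Om)}\leq C e^{-\si_0 t}\|\Ba\|_{J_q(\Om)}$, so $B_q$ generates an exponentially stable analytic $C_0$-semigroup on $J_q(\Om)$. Since $(J_q(\Om),D(B_q))_{1-1/p,p}=D_{q,p}(\Om)$ with equivalent norms, the classical maximal $L_p$ regularity for analytic semigroups (cf. \cite[Theorem 3.9]{SS08}) applied to $B_q$ furnishes
\begin{equation*}
\int_0^\infty \|B_q e^{B_q t}\Ba\|_{J_q(\Om)}^p\,dt \leq C\|\Ba\|_{D_{q,p}(\Om)}^p.
\end{equation*}
Expanding $B_q e^{B_q t}\Ba = e^{\si_0 t}\pa_t\Bu+\si_0 e^{\si_0 t}\Bu$ and using the a priori bound $\|e^{\si_0 t}\Bu\|_{L_p(\BR_+,J_q(\Om))}\leq C\|\Ba\|_{J_q(\Om)}\leq C\|\Ba\|_{D_{q,p}(\Om)}$, which itself follows by inserting the decay $\|e^{A_q t}\Ba\|_{J_q(\Om)}\leq Ce^{-2\si_0 t}\|\Ba\|_{J_q(\Om)}$ and integrating, the triangle inequality yields $\|e^{\si_0 t}\pa_t\Bu\|_{L_p(\BR_+,J_q(\Om))}\leq C\|\Ba\|_{D_{q,p}(\Om)}$.

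Finally, to upgrade to spatial $W_q^2$-regularity and to the pressure estimate, I would invoke Lemma \ref{lemm:Abels06} at a fixed $\la_0\in(0,\om_1)$ and Proposition \ref{prop:equiv}, which gives the elliptic bound $\|\Bv\|_{W_q^2(\Om)}+\|K(\Bv)\|_{W_q^1(\Om)}\leq C(\|\Bv\|_{J_q(\Om)}+\|A_q\Bv\|_{J_q(\Om)})$ for $\Bv\in D(A_q)$. Applied pointwise in time to $\Bu(t)$, together with $A_q\Bu=\pa_t\Bu$, this controls $e^{\si_0 t}\nabla\Bu$, $e^{\si_0 t}\nabla^2\Bu$, and $e^{\si_0 t}\Fp$ in the required $L_p$-norms. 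Uniqueness then follows because any other solution $(\wt\Bu,\wt\Fp)$ of \eqref{eq:sg} satisfies $\pa_t\wt\Bu=A_q\wt\Bu$ with $\wt\Bu(0)=\Ba$ by Proposition \ref{prop:equiv}, and the uniqueness of mild solutions for the analytic semigroup forces $\wt\Bu=e^{A_q t}\Ba=\Bu$, whence $\wt\Fp=K(\wt\Bu)=\Fp$. The main obstacle is not conceptual but organizational: one must confirm that the abstract real-interpolation identity $D_{q,p}(\Om)=(J_q(\Om),D(B_q))_{1-1/p,p}$ plugs cleanly into the maximal regularity result; since $D_{q,p}(\Om)$ is defined exactly as an interpolation space of $A_q$ and the shift by $\si_0 I$ preserves the interpolation scale, this step is essentially automatic.
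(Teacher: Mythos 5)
Your proof is correct and follows the same strategy the paper intends by its reference to \cite[Theorem 3.9]{SS08}: exponential stability of the (shifted) analytic semigroup together with the trace-method characterization of the real interpolation space $D_{q,p}(\Om)=(J_q(\Om),D(A_q))_{1-1/p,p}$ controls $e^{\si_0 t}\pa_t\Bu$ and $e^{\si_0 t}\Bu$ in $L_p(\BR_+,J_q(\Om))$, and the elliptic estimate from Lemma \ref{lemm:Abels06} (at a fixed resolvent point) together with Proposition \ref{prop:equiv} then upgrades this to the full $W_q^2$-regularity of $\Bu$ and the $W_q^1$-bound for $\Fp=K(\Bu)$. The verification that $(e^{A_qt}\Ba,K(e^{A_qt}\Ba))$ actually solves \eqref{eq:sg} and the uniqueness via reduction to the abstract Cauchy problem are also in order.
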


\begin{rema}\label{rema:sg}
By \cite[Propositions 2.2.2, 2.2.8]{Lunardi95},
it holds  that $\lim_{t \to 0+}\|e^{A_q t}\Ba-\Ba\|_{D_{q,p}(\Om)}=0$ for any $\Ba\in D_{q,p}(\Om)$.
Then, under the conditions $2/p+1/q\neq 1$ and $2/p+1/q\neq 2$,
we observe by Remark \ref{rema:domain} that $\lim_{t\to0+}\|e^{A_q t}\Ba-\Ba\|_{B_{q,p}^{2-2/p}(\Om)}=0$ for any $\Ba\in D_{q,p}(\Om)$. 
\end{rema}

%%%%%%%%%%%%%%%%%%%%%%%%%%%%%%%%%%%%%%%%%%%%%%%%%%%%%%%%%%%%%%%%%%%%%%
\section{Proof of Theorem \ref{theo:main2}}\label{sec:maxLpLq}
This section proves Theorem \ref{theo:main2}.
Assume that $\si_0$ is the same positive constant as in Proposition \ref{prop:sg} in what follows.
%Since the uniqueness follows from Lemma \ref{lemm:0data},
%we show the existence of solutions satisfying the required estimate.

{\bf Step 1.} 
The aim of this step is to decompose $(\Bu,\Fp)$ of \eqref{eq:linear}.
Let
\begin{equation*}
\Bu = \Bu_1 + \Bu_2 +\wt{\Bu}, \quad
\Fp = \Fp_1 + \Fp_2 + \wt{\Fp},
\end{equation*}
where each term on the right-hand sides satisfies the following systems:
\begin{align}
\label{eq:divide1}
&\left\{\begin{aligned} 
\pa_t\Bu_1-\Di\BT(\Bu_1,\Fp_1) &= 0 && \text{in $\Om$, $t>0$,}  \\
\di\Bu_1&=0 && \text{in $\Om$, $t>0$,} \\
\BT(\Bu_1,\Fp_1)\Be_N &= 0 && \text{on $\Ga$, $t>0$,} \\
\Bu_1 &= 0 && \text{on $S$, $t>0$,} \\
\Bu_1|_{t=0} &= \Ba && \text{in $\Om$,}
\end{aligned}\right. \\
\label{eq:divide2}
&\left\{\begin{aligned}
\pa_t\Bu_2+2\si_0\Bu_2-\Di\BT(\Bu_2,\Fp_2) &= 0 && \text{in $\Om$, $t>0$,} \\
\di\Bu_2&=g && \text{in $\Om$, $t>0$,} \\
\BT(\Bu_2,\Fp_2)\Be_N &= \Bh && \text{on $\Ga$, $t>0$,} \\
\Bu_2 &= 0 && \text{on $S$, $t>0$,} \\
\Bu_2|_{t=0} &= 0 && \text{in $\Om$,}
\end{aligned}\right. \\
\label{eq:dividetil}
&\left\{\begin{aligned}
\pa_t\wt{\Bu}-\Di\BT(\wt{\Bu},\wt{\Fp}) &= \Bf+2\si_0\Bu_2 && \text{in $\Om$, $t>0$,} \\
\di\wt{\Bu} &=0 && \text{in $\Om$, $t>0$,} \\
\BT(\wt{\Bu},\wt{\Fp})\Be_N &= 0 && \text{on $\Ga$, $t>0$,} \\
\wt{\Bu} &= 0 && \text{on $S$, $t>0$,} \\
\wt{\Bu}|_{t=0} &= 0 && \text{in $\Om$.}
\notag
\end{aligned}\right.
\end{align}

Let $P_q$ and $Q_q$ be the operators studied in Subsection \ref{subsec:HD}.
We then have
\begin{equation*}
\Bf+2\si_0\Bu_2 = P_q(\Bf+2\si_0\Bu_2) + \nabla Q_q(\Bf+2\si_0\Bu_2),
\end{equation*}
which gives further decompositions of $(\wt\Bu,\wt\Fp)$ %of \eqref{eq:dividetil}
as follows:
\begin{equation*}
\wt{\Bu}=\Bu_3+\Bu_4,\quad \wt{\Fp}=\Fp_3+\Fp_4,
\end{equation*}
%that satisfy
where 
\begin{align}
\label{eq:divide3}
&\left\{\begin{aligned}
\pa_t\Bu_3+2\si_0\Bu_3-\Di\BT(\Bu_3,\Fp_3) &= \nabla Q_q(\Bf+2\si_0\Bu_2) && \text{in $\Om$, $t>0$,} \\
\di\Bu_3 &= 0 && \text{in $\Om$, $t>0$,} \\
\BT(\Bu_3,\Fp_3)\Be_N &= 0 && \text{on $\Ga$, $t>0$,} \\
\Bu_3 &= 0 && \text{on $S$, $t>0$,} \\
\Bu_3|_{t=0} &= 0 && \text{in $\Om$,}  
\end{aligned}\right. \\
\label{eq:divide4}
&\left\{\begin{aligned}
\pa_t\Bu_4 -\Di\BT(\Bu_4,\Fp_4) &= P_q(\Bf+2\si_0\Bu_2)+2\si_0\Bu_3 && \text{in $\Om$, $t>0$,} \\
\di\Bu_4 &= 0 && \text{in $\Om$, $t>0$,} \\ 
\BT(\Bu_4,\Fp_4)\Be_N &= 0 && \text{on $\Ga$, $t>0$,} \\
\Bu_4 &= 0 && \text{on $S$, $t>0$,} \\
\Bu_4|_{t=0} &= 0 && \text{in $\Om$}.
\end{aligned}\right.
\end{align}
%{\bf Step 2.} 

By Corollary \ref{coro:sg} and Remark \ref{rema:sg},
there exists a solution $(\Bu_1,\Fp_1)$ of \eqref{eq:divide1}
%$(\Bu_1,\Fp_1)\in W_q^2(\Om)^N\times W_q^1(\Om)$ of \eqref{eq:divide1}
such that $\lim_{t\to0+}\|\Bu_1(t)-\Ba\|_{B_{q,p}^{2-2/p}(\Om)}=0$ and
\begin{align}\label{170224_10}
&\|e^{\si_0 t}(\pa_t\Bu_1,\Bu_1,\nabla\Bu_1,\nabla^2\Bu_1)\|_{L_p(\BR_+,L_q(\Om))}
+\|e^{\si_0 t}\Fp_1\|_{L_p(\BR_+,W_q^1(\Om))} \\
&\leq C_{N,d,p,q,\mu,\si_0}\|\Ba\|_{D_{q,p}(\Om)}. \notag
\end{align}
%for some positive constant $C_{N,d,p,q,\mu,\si_0}$.

{\bf Step 2.}
We consider \eqref{eq:divide2} in this step.
%by using Proposition \ref{prop:shifted} with $\de=2\si_0$.
To this end, one extends $g$ and $\Bh$  to functions defined on
the whole line with respect to time $t$.
Let $E_0$ be the zero extension operator and $X$ be a Banach space. % with respect to time $t$,
%$i.e. for $f=f(t)$ on $\BR_+$
%\begin{equation*}
%E_0 f =
%\left\{\begin{aligned}
%&f(t) && (t>0), \\
%&0 && (t<0).
%\end{aligned}\right.
%\end{equation*}
%Since
Then,
\begin{equation}\label{E0:1}
E_0\in \CL(L_p(\BR_+,X), L_p(\BR,X))\cap \CL({}_0W_p^1(\BR_+,X),W_p^1(\BR,X)).
\end{equation}
Especially, by \eqref{E0:1} with $X=L_q(\Om)$ and the complex interpolation method, % yields %that
\begin{equation}\label{E0:2}
E_0 \in \CL({}_0 H_p^{1/2}(\BR_+,L_q(\Om)),H_p^{1/2}(\BR,L_q(\Om))),
\end{equation}
where we have used the fact that
\begin{equation*}
[L_p(\BR,L_q(\Om)),W_p^1(\BR,L_q(\Om))]_{1/2}=H_p^{1/2}(\BR,L_q(\Om))
\end{equation*}
%(cf. the appendix for the proof).
%It is well-known that 
%$[L_p(\BR,X),W_p^1(\BR,X)]_{1/2}=H_p^{1/2}(\BR,X)$
(cf. e.g. \cite[Theorem 1.56]{DK13}).
By \eqref{E0:1} and \eqref{E0:2}, one has
\begin{equation*}%\label{170520_9}
E_0(e^{\si_0 t}g) \in H_{q,p}^{1,1/2}(\Om\times\BR)\cap W_p^1(\BR,\wh W_{q,\Ga}^{-1}(\Om)), \quad %\\
E_0(e^{\si_ 0 t}\Bh) \in H_{q,p}^{1,1/2}(\Om\times\BR)^N, \notag
\end{equation*}
together with the estimates:
\begin{align*}%\label{170525_11}
\|E_0(e^{\si_0 t}g)\|_{H_{q,p}^{1,1/2}(\Om\times\BR)} &\leq C_{p,q}\|e^{\si_0 t}g\|_{{}_0H_{q,p}^{1,1/2}(\Om\times\BR_+)}, \\
\|E_0(e^{\si_0 t}g)\|_{W_p^1(\BR,\wh W_{q,\Ga}^{-1}(\Om))} &\leq C_{p,q}\|e^{\si_0 t}g\|_{W_p^1(\BR_+,\wh W_{q,\Ga}^{-1}(\Om))}, \notag \\
\|E_0(e^{\si_0 t}\Bh)\|_{H_{q,p}^{1,1/2}(\Om\times\BR)} &\leq C_{p,q}\|e^{\si_0 t}\Bh\|_{{}_0H_{q,p}^{1,1/2}(\Om\times\BR_+)}. \notag
\end{align*}
Thus, setting $G=e^{-\si_0 t}E_0(e^{\si_0 t}g)$ and $\BH=e^{-\si_0 t}E_0(e^{\si_0 t}\Bh)$ yields
\begin{align*}
&G\in H_{q,p,-\si_0}^{1,1/2}(\Om\times\BR) \cap W_{p,-\si_0}^1(\BR,\wh W_{q,\Ga}^{-1}(\Om)), \quad
\BH\in H_{q,p,-\si_0}^{1,1/2}(\Om\times\BR)^N, \\
&G=
\left\{\begin{aligned}
&g &&  (t>0), \\
&0 &&  (t<0),
\end{aligned}\right. \quad
\BH=
\left\{\begin{aligned}
&\Bh && (t>0), \\
&0 && (t<0), 
\end{aligned}\right.
\end{align*}
and also
\begin{align*}
\|e^{\si_0 t}G\|_{H_{q,p}^{1,1/2}(\Om\times\BR)} &\leq C_{p,q}\|e^{\si_0 t}g\|_{{}_0H_{q,p}^{1,1/2}(\Om\times\BR_+)}, \\
\|e^{\si_0 t}(\pa_ tG, G)\|_{L_p(\BR,\wh {W}_{q,\Ga}^{-1}(\Om))} 
&\leq C_{p,q}\|e^{\si_0 t}g\|_{W_p^1(\BR_+,\wh W_{q,\Ga}^{-1}(\Om))}, \\
\|e^{\si_0 t}\BH\|_{H_{q,p}^{1,1/2}(\Om\times\BR)} &\leq C_{p,q}\|e^{\si_0 t}\Bh\|_{{}_0H_{q,p}^{1,1/2}(\Om\times\BR_+)}.
\end{align*}
Combining these properties with Proposition \ref{prop:shifted} for $\de=\si_0$
furnishes that there is a solution $(\BU,P)\in W_{q,p,-\si_0}^{2,1}(\Om\times\BR)^N\times L_{p,-\si_0}(\BR,W_q^1(\Om))$ to
\begin{equation*}
\left\{\begin{aligned}
\pa_t\BU +2\si_0\BU-\Di\BT(\BU,P)&=0 && \text{in $\Om$, $t\in\BR$,} \\
\di\BU&=G &&  \text{in $\Om$, $t\in\BR$,} \\
\BT(\BU,P)\Be_N &=\BH && \text{on $\Ga$, $t\in\BR$,} \\
\BU&= 0 && \text{on $S$, $t\in\BR$,}
\end{aligned}\right.
\end{equation*}
and furthermore, $\BU$ vanishes for $t<0$ and
\begin{align*}
&\|e^{\si_0 t}(\pa_t\BU,\BU,\nabla\BU,\nabla^2\BU)\|_{L_p(\BR,L_q(\Om))}
+\|e^{\si_0 t}P\|_{L_p(\BR,W_q^1(\Om))}\\
&\leq C_{N,d,p,q,\mu,\si_0}\left(\|e^{\si_0 t}g\|_{W_p^1(\BR_+,\wh W_{q,\Ga}^{-1}(\Om))}
+\|e^{\si_0 t}(g,\Bh)\|_{{}_0H_{q,p}^{1,1/2}(\Om\times\BR_+)}\right).
\end{align*}
%with some positive constant $C=C(N,d,p,q,\mu,\si_0)$.
Therefore, $(\Bu_2,\Fp_2)=(\BU,P)$ solves \eqref{eq:divide2}
and satisfies %the last inequality yields
\begin{align}\label{170225_2}
&\|e^{\si_0 t}(\pa_t \Bu_2,\Bu_2,\nabla\Bu_2,\nabla^2\Bu_2)\|_{L_p(\BR_+,L_q(\Om))}
+\|e^{\si_0 t}\Fp_2\|_{L_p(\BR_+,L_q(\Om))} \\
&\leq C_{N,d,p,q,\mu,\si_0}\left(\|e^{\si_0 t}g\|_{W_p^1(\BR_+,\wh W_{q,\Ga}^{-1}(\Om))}
+\|e^{\si_0 t}(g,\Bh)\|_{{}_0H_{q,p}^{1,1/2}(\Om\times\BR_+)}\right). \notag
\end{align}

{\bf Step 3.} 
We consider \eqref{eq:divide3} in this step.
By Proposition \ref{prop:decomp} and \eqref{170225_2},
\begin{align*}
&\|e^{\si_0 t}E_0\nabla Q_q(\Bf+2\si_0\Bu_2)\|_{L_p(\BR,L_q(\Om))} 
\leq C\|e^{\si_0 t}(\Bf+2\si_0\Bu_2)\|_{L_p(\BR_+,L_q(\Om))} \\
&\leq C_{N,d,p,q,\mu,\si_0}\Big(\|e^{\si_0 t}\Bf\|_{L_p(\BR_+,L_q(\Om))}+\|e^{\si_0 t}g\|_{W_p^1(\BR_+,\wh W_{q,\Ga}^{-1}(\Om))}\\
&\quad+\|e^{\si_0 t}(g,\Bh)\|_{{}_0H_{q,p}^{1,1/2}(\Om\times\BR_+)}\Big),
\end{align*}
which implies $E_0\nabla Q_q(\Bf+2\si_0\Bu_2)\in L_{p,-\si_0}(\BR,L_q(\Om))^N$.
One thus observes by Proposition \ref{prop:shifted} with $\de=\si_0$ that
there is a solution $(\BV,Q)\in W_{q,p,-\si_0}^{2,1}(\Om\times\BR)^N\times L_{p,-\si_0}(\BR,W_q^1(\Om))$ to
\begin{equation*}
\left\{\begin{aligned}
\pa_t\BV +2\si_0\BV-\Di\BT(\BV,Q)&=E_0\nabla Q_q(\Bf+2\si_0\Bu_2) && \text{in $\Om$, $t\in\BR$,} \\
\di\BV&=0 &&  \text{in $\Om$, $t\in\BR$,} \\
\BT(\BV,Q)\Be_N &=0 && \text{on $\Ga$, $t\in\BR$,} \\
\BV&= 0 && \text{on $S$, $t\in\BR$,}
\end{aligned}\right.
\end{equation*}
and furthermore, $\BV$ vanishes for $t<0$ and
\begin{align*}
&\|e^{\si_0 t}(\pa_t\BV,\BV,\nabla\BV,\nabla^2\BV)\|_{L_p(\BR,L_q(\Om))}
+\|e^{\si_0 t}Q\|_{L_p(\BR,W_q^1(\Om))}\\
&\leq C_{N,d,p,q,\mu,\si_0}\Big(\|e^{\si_0 t}\Bf\|_{L_p(\BR_+,L_q(\Om))}
+\|e^{\si_0 t}g\|_{W_p^1(\BR_+,\wh W_{q,\Ga}^{-1}(\Om))}\\
&\quad +\|e^{\si_0 t}(g,\Bh)\|_{{}_0H_{q,p}^{1,1/2}(\Om\times\BR_+)}\Big).
\end{align*}
It is then clear that $(\Bu_3,\Fp_3)=(\BV,Q)$ solves \eqref{eq:divide3} and satisfies
%By Proposition \ref{prop:shifted} for $\de=\si_0$ and Proposition \ref{prop:decomp},
%\eqref{eq:divide3} admits a solution $(\Bu_3,\Fp_3)\in W_{q,p}^{2,1}(\Om\times\BR_+)^N\times L_p(\BR_+,W_q^1(\Om))$  satisfying
\begin{align}\label{est:divide3}
&\|e^{\si_0 t}(\pa_t\Bu_3,\Bu_3,\nabla\Bu_3,\nabla^2\Bu_3)\|_{L_p(\BR_+,L_q(\Om))}
+\|e^{\si_0 t}\Fp_3\|_{L_p(\BR_+,W_q^1(\Om))} \\
%&\leq C\|e^{\si_0 t}(\Bf,\Bu_2)\|_{L_p(\BR_+,L_q(\Om))} \notag \\
&\leq C_{N,d,p,q,\mu,\si_0}\Big(\|e^{\si_0 t}\Bf\|_{L_p(\BR_+,L_q(\Om))}
+\|e^{\si_0 t}g\|_{W_p^1(\BR_+,\wh W_{q,\Ga}^{-1}(\Om))} \notag \\
&\quad+\|e^{\si_0 t}(g,\Bh)\|_{{}_0H_{q,p}^{1,1/2}(\Om\times\BR_+)}\Big). \notag
\end{align}
%for some positive constant $C=C_{N,d,p,q,\mu,\si_0}$, where we have used \eqref{170225_2}.

{\bf Step 4.} 
We consider \eqref{eq:divide4} in this step.
Let $\BF=P_q(\Bf+2\si_0\Bu_2)+2\si_0\Bu_3$.
Then, by Proposition \ref{prop:decomp}, \eqref{170225_2}, and \eqref{est:divide3},
one has $e^{\si_0 t}\BF\in L_p(\BR_+,J_q(\Om))$ with 
\begin{align}\label{170721_1}
&\|e^{\si_0 t}\BF\|_{L_p(\BR_+,L_q(\Om))}
\leq C_{N,d,p,q,\mu,\si_0}\Big(\|e^{\si_0 t}\Bf\|_{L_p(\BR_+,L_q(\Om))}
 \\
&\quad
+\|e^{\si_0 t}g\|_{W_p^1(\BR_+,\wh W_{q,\Ga}^{-1}(\Om))}
+\|e^{\si_0 t}(g,\Bh)\|_{{}_0H_{q,p}^{1,1/2}(\Om\times\BR_+)}\Big). \notag
\end{align}
Such an $\BF$ can be approximated by an element of $C_0^\infty(\BR_+,J_q(\Om))$
under the norm $\|e^{\si_0 t}\cdot\|_{L_p(\BR_+,L_q(\Om))}$,
so that it suffices to consider the case $\BF\in C_0^\infty(\BR_+,J_q(\Om))$. % in what follows.

As the first step, we estimate the solution $\Bu_4$ to  \eqref{eq:divide4} given by Duhamel's formula:
%We now have the following representation of $\Bu_4$:
\begin{equation*}
\Bu_4(t)= \int_0^t e^{A_q(t-s)}\BF(s)\intd s \quad (t>0),
\end{equation*}
which is also written as
\begin{equation*}
e^{\si_0 t}\Bu_4(t) = \int_0^t e^{\si_0(t-s)}e^{A_q(t-s)}(e^{\si_0 s}\BF(s))\intd s.
\end{equation*}
%and it suffices to consider the case $\BF\in C_0^\infty(\BR_+,J_q(\Om))$ because $C_0^\infty(\BR_+,J_q(\Om))$ 
%is dense in $L_{p,-\si_0}(\BR_+,J_q(\Om))=\{\Bf\in L_p(\BR_+,J_q(\Om)) \mid e^{\si_0 t}\Bf\in L_p(\BR_+,J_q(\Om))\}$
%with norm $\|\Bf\|_{L_{p,-\si_0}(\BR_+,J_q(\Om))}=\|e^{\si_0 t}\Bf\|_{L_p(\BR_+,J_q(\Om))}$.
Let $\chi_A$ be the characteristic function of $A\subset\BR$, and then by Proposition \ref{prop:sg}
%suppose that $\BF\in C_0^\infty(\BR_+,J_q(\Om))^N$ without loss of generality. 
\begin{align*}
\|e^{\si_0 t} \Bu_4(t)\|_{L_q(\Om)}
&\leq C_{N,d,q,\mu,\si_0}\int_0^t e^{-\si_0(t-s)}\|e^{\si_0 s}\BF(s)\|_{L_q(\Om)}\intd s \\
&= C_{N,d,q,\mu,\si_0}\int_{-\infty}^t e^{-\si_0(t-s)}\|e^{\si_0 s}\BF(s)\|_{L_q(\Om)}\intd s \\
&= C_{N,d,q,\mu,\si_0}\int_{-\infty}^\infty\chi_{(0,\infty)}(t-s)e^{-\si_0(t-s)}\|e^{\si_0 s}\BF(s)\|_{L_q(\Om)}\intd s \\
&= C_{N,d,q,\mu,\si_0}(\chi_{(0,\infty)}(\cdot)e^{-\si_0 \cdot}*(e^{\si_0\cdot}\,\BF(\cdot)))(t),
\end{align*}
where $(f*g)(t)=\int_\BR f(t-s)g(s)\intd s$.
Combining this inequality with Young's inequality $\|f*g\|_{L_p(\BR)}\leq\|f\|_{L_1(\BR)}\|g\|_{L_p(\BR)}$ furnishes that
\begin{align}\label{est:u4}
\|e^{\si_0 t} \Bu_4\|_{L_p(\BR_+, L_q(\Om))}
%\leq \|e^{\si_0 t} \Bu_4\|_{L_p(\BR, L_q(\Om))} \\
&\leq C_{N,d,q,\mu,\si_0}\|\chi_{(0,\infty)}(\cdot)e^{-\si_0 \cdot}\|_{L_1(\BR)}\|e^{\si_0 t}\BF\|_{L_p(\BR,L_q(\Om))} \\
&\leq C_{N,d,q,\mu,\si_0}\|e^{\si_0 t}\BF\|_{L_p(\BR_+,L_q(\Om))}. \notag
\end{align}
%for a positive constant $C=C_{N,d,p,q,\mu,\si_0}$.
%We thus obtain, by \eqref{170225_2}, \eqref{est:divide3}, and Proposition \ref{prop:decomp},
%\begin{align}\label{est:u4}
%&\|e^{\si_0 t} \Bu_4\|_{L_p(\BR_+, L_q(\Om))} 
%\leq C\Big(\|e^{\si_0 t}\Bf\|_{L_p(\BR_+,L_q(\Om))} \\
%&\quad+\|e^{\si_0 t}(\pa_t g,g)\|_{L_p(\BR_+,\wh W_{q,\Ga}^{-1}(\Om))} 
%+\|e^{\si_0 t}(g,\Bh)\|_{H_{q,p}^{1,1/2}(\Om\times\BR_+)}\Big) \notag
%\end{align}
%for a positive constant $C=C_{N,d,p,q,\mu,\si_0}$.

As the second step,
we write \eqref{eq:divide4} as
%Since \eqref{eq:divide4} can be written as
\begin{equation*}
\left\{\begin{aligned}
\pa_t\Bu_4 +2\si_0\Bu_4 -\Di\BT(\Bu_4,\Fp_4) &= \BF + 2\si_0\Bu_4 && \text{in $\Om$, $t>0$,} \\
\di\Bu_4 &= 0 && \text{in $\Om$, $t>0$,} \\ 
\BT(\Bu_4,\te_4)\Be_N &= 0 && \text{on $\Ga$, $t>0$,} \\
\Bu_4 &= 0 && \text{on $S$, $t>0$,} \\
\Bu_4|_{t=0} &= 0 && \text{in $\Om$.}
\end{aligned}\right.
\end{equation*}
Then, similarly to Step 3, one observes by \eqref{est:u4} that
%by Proposition \ref{prop:shifted} with $\de=\si_0$ and
%by \eqref{170225_2}-\eqref{est:u4} % yield
\begin{align*}%\label{est:divide4}
&\|e^{\si_0 t}(\pa_t\Bu_4,\Bu_4,\nabla\Bu_4,\nabla^2\Bu_4)\|_{L_p(\BR_+,L_q(\Om))}
+\|e^{\si_0 t}\Fp_4\|_{L_p(\BR_+,W_q^1(\Om))} \\
&\leq C_{N,d,p,q,\mu,\si_0}\|e^{\si_0 t}(\BF,\Bu_4)\|_{L_p(\BR_+,L_q(\Om))} \notag \\
&\leq C_{N,d,p,q,\mu,\si_0}\|e^{\si_0 t}\BF\|_{L_p(\BR_+,L_q(\Om))},
\end{align*}
which, combined with \eqref{170721_1}, furnishes
\begin{align}\label{est:divide4}
&\|e^{\si_0 t}(\pa_t\Bu_4,\Bu_4,\nabla\Bu_4,\nabla^2\Bu_4)\|_{L_p(\BR_+,L_q(\Om))}
+\|e^{\si_0 t}\Fp_4\|_{L_p(\BR_+,W_q^1(\Om))} \\
%&\leq C_{N,d,p,q,\mu,\si_0}\|e^{\si_0 t}(\BF,\Bu_4)\|_{L_p(\BR_+,L_q(\Om))} \notag \\
&\leq C_{N,d,p,q,\mu,\si_0}\Big(\|e^{\si_0 t}\Bf\|_{L_p(\BR_+,L_q(\Om))}
+\|e^{\si_0 t}g\|_{L_p(\BR_+,\wh W_{q,\Ga}^{-1}(\Om))} \notag \\
&\quad+\|e^{\si_0 t}(g,\Bh)\|_{{}_0H_{q,p}^{1,1/2}(\Om\times\BR_+)}\Big). \notag
\end{align}
%for a positive constant $C=C_{N,d,p,q,\mu,\si_0}$.

{\bf Step 5.}
Let $\Fp=\sum_{j=1}^4\Fp_j$ and $\Bu=\Bu_1+\Bv$ for $\Bv=\sum_{j=2}^4\Bu_j$,
and then $(\Bu,\Fp)$ is a solution to \eqref{eq:linear}
and satisfies the required estimate of Theorem \ref{theo:main2} by \eqref{170224_10}, \eqref{170225_2}, \eqref{est:divide3}, and \eqref{est:divide4}.

Finally, we consider the initial condition and the uniqueness of solutions.
For two Banach spaces $X$, $Y$, the symbol $X\hookrightarrow Y$ means that
$X$ is continuously injected in $Y$, i.e. there is a positive constant $C$
such that $\|f\|_Y\leq C\|f\|_X$ for any $f\in X$.
It is then well-known by e.g. \cite[Theorem 4.10.2]{Amann95} that 
\begin{equation}\label{embed:Amann}
W_p^1(\BR_+,X_0)\cap L_p(\BR_+,X_1)\hookrightarrow BUC([0,\infty),(X_0,X_1)_{1-1/p,p})
\end{equation}
for Banach spaces $X_0$, $X_1$ satisfying the properties: $X_1\hookrightarrow X_0$ and $X_1$ is dense in $X_0$. 
One makes use of \eqref{embed:Amann} with $X_0=L_q(\Om)$ and $X_1=W_q^2(\Om)$
in order to obtain
\begin{equation}\label{170615_1}
W_{q,p}^{2,1}(\Om\times \BR_+)
\hookrightarrow BUC([0,\infty),B_{q,p}^{2-2/p}(\Om)),
\end{equation}
which implies that $\Bv|_{t=0}=0$ in $B_{q,p}^{2-2/p}(\Om)^N$.
We thus conclude $\lim_{t\to0+}\|\Bu(t)-\Ba\|_{B_{q,p}^{2-2/p}(\Om)}=0$.
The uniqueness of solutions to \eqref{eq:linear}
follows from the existence of solutions for a dual problem (cf. e.g. \cite[Subsection 7.2]{Saito15b}),
which completes the proof of Theorem \ref{theo:main2}.
%The uniqueness can be proved in the same manner as in \cite[Subsection 7.2]{Saito15b}.

%%%%%%%%%%%%%%%%%%%%%%%%%%%%%%%%%%%%%%%%%%%%%%%%%%%%%%%%%%%%%%%%%%%%%%
\section{Proof of Theorem \ref{theo:main}}\label{sec:nonl}
This section proves Theorem \ref{theo:main}.
To this end, we set for $1<q<\infty$
\begin{equation*}
B_q(\Om\times\BR_+)
= W_\infty^1(\BR_+,L_\infty(\Om))\cap L_\infty(\BR_+, W_q^1(\Om))
%W_{\infty,\infty}^{1,1}(\Om\times\BR_+)
%= W_\infty^1(\BR_+,L_\infty(\Om))\cap L_\infty(\BR_+, W_\infty^1(\Om))
\end{equation*}
and start with 
\begin{lemm}\label{lemm:H1/2}
Let $1<p,q<\infty$.
%Then there exists a positive constant $C$ such that
Then the following assertions hold true.
\begin{enumerate}[$(1)$]
\item\label{lemm:H1/2_1}
For any $f\in W_\infty^1(\BR_+,L_\infty(\Om))$
and $g\in H_p^{1/2}(\BR_+,L_q(\Om))$,
\begin{equation*}
\|fg\|_{H_p^{1/2}(\BR_+,L_q(\Om))} 
\leq \|f\|_{W_\infty^1(\BR_+,L_\infty(\Om))}\|g\|_{H_p^{1/2}(\BR_+,L_q(\Om))}.
\end{equation*}
\item\label{lemm:H1/2_3}
Let $q>N$. Then there exists a positive constant $C_{N,q}$ such that
for any $f,g\in B_q(\Om\times\BR_+)$ and $h\in H_{q,p}^{1,1/2}(\Om\times\BR_+)$,
\begin{align*}
\|fg\|_{B_q(\Om\times\BR_+)}
&\leq C_{N,q}\|f\|_{B_q(\Om\times\BR_+)}\|g\|_{B_q(\Om\times\BR_+)}, \\
\|fh\|_{H_{q,p}^{1,1/2}(\Om\times\BR_+)}
&\leq C_{N,q}\|f\|_{B_q(\Om\times\BR_+)}\|h\|_{H_{q,p}^{1,1/2}(\Om\times\BR_+)}.
\end{align*}
\item\label{lemm:H1/2_2}
For any $f\in H_p^{1/2}(\BR_+,W_q^1(\Om))$,
\begin{equation*}
\|\nabla f\|_{H_p^{1/2}(\BR_+,L_q(\Om))} \leq \|f\|_{H_p^{1/2}(\BR_+,W_q^1(\Om))}.
\end{equation*}
\end{enumerate}
\end{lemm}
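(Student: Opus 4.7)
The plan is to reduce all three assertions to endpoint estimates on the pair $(L_p(\BR_+,L_q(\Om)), W_p^1(\BR_+,L_q(\Om)))$ and then invoke complex interpolation at index $1/2$, which by definition produces $H_p^{1/2}(\BR_+,L_q(\Om))$ (and similarly for the $W_q^1(\Om)$-valued case in \eqref{lemm:H1/2_2}).

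For \eqref{lemm:H1/2_1}, I fix $f\in W_\infty^1(\BR_+,L_\infty(\Om))$ and view multiplication $M_f:g\mapsto fg$ as a linear operator. On the base space, $\|M_f g\|_{L_p(\BR_+,L_q(\Om))}\leq \|f\|_{L_\infty(\BR_+,L_\infty(\Om))}\|g\|_{L_p(\BR_+,L_q(\Om))}$. Using Leibniz in $t$, $\pa_t(fg)=(\pa_t f)g+f\pa_t g$, gives $\|M_f g\|_{W_p^1(\BR_+,L_q(\Om))}\leq \|f\|_{W_\infty^1(\BR_+,L_\infty(\Om))}\|g\|_{W_p^1(\BR_+,L_q(\Om))}$. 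Complex interpolation at $\te=1/2$ delivers the stated estimate, since the norm $\|M_f\|$ on the interpolated space is dominated by the geometric mean of the two endpoint norms, which is in turn bounded by $\|f\|_{W_\infty^1(\BR_+,L_\infty(\Om))}$.

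For \eqref{lemm:H1/2_2}, the spatial gradient $\nabla:W_q^1(\Om)\to L_q(\Om)^N$ is bounded with norm at most one and commutes with $\pa_t$, so it maps $L_p(\BR_+,W_q^1(\Om))\to L_p(\BR_+,L_q(\Om)^N)$ and $W_p^1(\BR_+,W_q^1(\Om))\to W_p^1(\BR_+,L_q(\Om)^N)$, both with norm at most one. Interpolating at $\te=1/2$ produces the required inequality.

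For \eqref{lemm:H1/2_3}, the hypothesis $q>N$ gives the Sobolev embedding $W_q^1(\Om)\hookrightarrow L_\infty(\Om)$, whence the pointwise product estimate $\|fg\|_{W_q^1(\Om)}\leq C_{N,q}\|f\|_{W_q^1(\Om)}\|g\|_{W_q^1(\Om)}$. Combined with the trivial bound $\|fg\|_{L_\infty(\Om)}\leq \|f\|_{L_\infty(\Om)}\|g\|_{L_\infty(\Om)}$ and Leibniz in time, this yields the first estimate in $B_q(\Om\times\BR_+)$. For the second estimate, the $L_p(\BR_+,W_q^1(\Om))$ contribution to $\|fh\|_{H_{q,p}^{1,1/2}(\Om\times\BR_+)}$ follows from the same pointwise product estimate combined with the Sobolev embedding applied to $h$, while the $H_p^{1/2}(\BR_+,L_q(\Om))$ contribution is a direct consequence of \eqref{lemm:H1/2_1}, since $B_q(\Om\times\BR_+)\subset W_\infty^1(\BR_+,L_\infty(\Om))$. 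The only substantive point is the complex interpolation step of \eqref{lemm:H1/2_1}; once that is in place, the remaining parts are bookkeeping and I expect no real obstacle.
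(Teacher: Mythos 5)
Your proof is correct and follows essentially the same route as the paper's: for \eqref{lemm:H1/2_1} and \eqref{lemm:H1/2_2} you interpolate the endpoint bounds on the pair $(L_p(\BR_+,\cdot),W_p^1(\BR_+,\cdot))$ and use the definitional identity $[L_p,W_p^1]_{1/2}=H_p^{1/2}$, and for \eqref{lemm:H1/2_3} you invoke the Banach-algebra property of $W_q^1(\Om)$ for $q>N$ together with part \eqref{lemm:H1/2_1}. The only cosmetic difference is that the paper bounds both endpoints directly by $\|f\|_{W_\infty^1(\BR_+,L_\infty(\Om))}$ rather than taking the geometric mean, and the $L_p(\BR_+,W_q^1(\Om))$ piece in \eqref{lemm:H1/2_3} needs only the pointwise-in-$t$ Banach algebra bound followed by H\"older in $t$ (your phrase ``Sobolev embedding applied to $h$'' is an unnecessary detour), but these do not affect correctness.
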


\begin{proof}
\eqref{lemm:H1/2_1}.
It clearly holds that
\begin{align*}
\|fg\|_{L_p(\BR_+,L_q(\Om))} 
%&\leq \|f\|_{L_\infty(\BR_+,L_\infty(\Om))}\|g\|_{L_p(\BR_+,L_q(\Om))} \\
&\leq \|f\|_{W_\infty^1(\BR_+,L_\infty(\Om))}\|g\|_{L_p(\BR_+,L_q(\Om))},\\
\|fg\|_{W_p^1(\BR_+,L_q(\Om))}
&\leq \|f\|_{W_\infty^1(\BR_+,L_\infty(\Om))}\|g\|_{W_p^1(\BR_+,L_q(\Om))}.
\end{align*}
Let $T_f g=fg$, and then $T_f \in \CL(L_p(\BR_+,L_q(\Om))\cap \CL(W_p^1(\BR_+,L_q(\Om))$ with
%\begin{align*}
%&T_f:L_p(\BR_+,L_q(\Om))\to L_p(\BR_+,L_q(\Om)), \\
%&T_f:H_p^1(\BR_+,L_q(\Om))\to H_p^1(\BR_+,L_q(\Om)),
%\end{align*}
%are bounded linear operators with
\begin{align*}
\|T_f\|_{\CL(L_p(\BR_+,L_q(\Om)))}
&\leq \|f\|_{W_\infty^1(\BR_+,L_\infty(\Om))}, \\
\|T_f\|_{\CL(W_p^1(\BR_+,L_q(\Om)))}
& \leq \|f\|_{W_\infty^1(\BR_+,L_\infty(\Om))}.
\end{align*}
Combining these properties with the complex interpolation method %(cf. e.g. \cite[Subsection 7.52]{AF03})
furnishes that
\begin{align*}
&\|T_f g\|_{[L_p(\BR_+,L_q(\Om),W_p^1(\BR_+,L_q(\Om)))]_{1/2}} \\
&\leq \|f\|_{W_\infty^1(\BR_+,L_\infty(\Om))}\|g\|_{[L_p(\BR_+,L_q(\Om),W_p^1(\BR_+,L_q(\Om)))]_{1/2}}.
\end{align*}
Noting 
$[L_p(\BR_+,L_q(\Om),W_p^1(\BR_+,L_q(\Om)))]_{1/2}=H_p^{1/2}(\BR_+,L_q(\Om))$ by the definition,
we complete the proof of Lemma \ref{lemm:H1/2} \eqref{lemm:H1/2_1}.
%\begin{equation*}
%\|T_f g\|_{H_p^{1/2}(\BR_+,L_q(\Om))}
%\leq 2\|f\|_{H_\infty^1(\BR_+,L_\infty(\Om))}
%\|g\|_{H_p^{1/2}(\BR_+,L_q(\Om))},
%\end{equation*}
%which completes the proof of the lemma.

\eqref{lemm:H1/2_3}.
Since $W_q^1(\Om)$ is a Banach algebra for $q>N$,
we have
\begin{align}\label{170721_5}
\|fg\|_{L_\infty(\BR_+,W_q^1(\Om))}
&\leq C_{N,q}\|f\|_{L_\infty(\BR_+,W_q^1(\Om))}\|g\|_{L_\infty(\BR_+,W_q^1(\Om))}, \\
\|fh\|_{L_p(\BR_+,W_q^1(\Om))}
&\leq C_{N,q}\|f\|_{L_\infty(\BR_+,W_q^1(\Om))}\|h\|_{L_p(\BR_+,W_q^1(\Om))}. \notag
\end{align}
It is clear that $\|fg\|_{W_\infty^1(\BR_+,L_\infty(\Om))}\leq \|f\|_{W_\infty^1(\BR_+,L_\infty(\Om))}\|g\|_{W_\infty^1(\BR_+,L_\infty(\Om))}$,
so that one has, together with the first inequality of \eqref{170721_5},
\begin{align*}
\|fg\|_{B_q(\Om\times\BR_+)}\leq C_{N,q}\|f\|_{B_q(\Om\times\BR_+)}\|g\|_{B_q(\Om\times\BR_+)}.
\end{align*}
The other estimate of Lemma \ref{lemm:H1/2} \eqref{lemm:H1/2_3} follows from 
Lemma \ref{lemm:H1/2} (1) and the second inequality of \eqref{170721_5}.

%Combining this inequality with \eqref{lemm:H1/2_1} completes the proof of Lemma \ref{lemm:H1/2} \eqref{lemm:H1/2_3}. 

\eqref{lemm:H1/2_2}.
The required inequality follows from the complex interpolation method immediately,
which completes the proof of Lemma \ref{lemm:H1/2} \eqref{lemm:H1/2_2}.
\end{proof}

%\begin{coro}\label{coro:H1/2}
%Let $1<p,q<\infty$.
%Then there exists a positive constant $C$ such that,
%for any $f\in W_\infty^1(\BR_+,L_\infty(\Om))$
%and $g\in H_{q,p}^{1,1/2}(\Om\times\BR_+)$,
%\begin{equation*}
%\|fg\|_{H_{q,p}^{1,1/2}(\Om\times\BR_+)} 
%\leq C\|f\|_{W_\infty^1(\BR_+,L_\infty(\Om))}
%\|g\|_{H_{q,p}^{1,1/2}(\Om\times\BR_+)}.
%\end{equation*}
%\end{coro}

At this point, we introduce some embedding properties.

\begin{lemm}\label{lemm:embed}
Let $p$, $q$ satisfy \eqref{pq}.
Then there exists a positive constant $M_1\geq 1$ such that the following assertions hold true.
\begin{enumerate}[$(1)$]
\item\label{lemm:embed_1}
$\|f\|_{BUC(\overline\Om)}\leq M_1\|f\|_{W_q^1(\Om)}$ for any $f\in W_q^1(\Om)$.
%$W_q^1(\Om)\hookrightarrow_{M_1} BUC(\overline{\Om})$ provided that $q>N$.
\item\label{lemm:embed_2}
$\|f\|_{BUC([0,\infty),BUC^1(\overline{\Om}))}\leq M_1\|f\|_{W_{q,p}^{2,1}(\Om\times\BR_+)}$
for any $f\in W_{q,p}^{2,1}(\Om\times\BR_+)$.
%$W_{q,p}^{2,1}(\Om\times\BR_+)\hookrightarrow BUC(\overline{\BR}_+,BUC^1(\overline{\Om}))$
%provided that $2/p+N/q<1$.
\item\label{lemm:embed_3}
$\|f\|_{H_p^{1/2}(\BR_+,W_q^1(\Om))}\leq M_1\|f\|_{W_{q,p}^{2,1}(\Om\times\BR_+)}$
for any $f\in W_{q,p}^{2,1}(\Om\times\BR_+)$.
%$W_{q,p}^{2,1}(\Om\times\BR_+)\hookrightarrow H_{q,p}^{1,1/2}(\Om\times\BR_+)$.
\end{enumerate}
\end{lemm}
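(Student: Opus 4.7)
The three assertions are all continuous embeddings, so the plan is to identify the correct classical embedding for each part and verify that the exponents satisfy the hypotheses forced by \eqref{pq}.

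For \eqref{lemm:embed_1}, since $\Om$ is a slab with smooth boundary and uniform $W_q^{2-1/q}$ regularity, it is a standard Sobolev extension domain, and the classical Sobolev embedding $W_q^1(\Om)\hookrightarrow BUC(\overline\Om)$ holds whenever $q>N$; this is exactly the range in \eqref{pq}. I would therefore invoke the Sobolev embedding directly and call the resulting constant $M_1^{(1)}$.

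For \eqref{lemm:embed_2}, the strategy is to combine the two classical embeddings already cited just above the lemma. First, applying \eqref{embed:Amann} with $X_0=L_q(\Om)$ and $X_1=W_q^2(\Om)$ gives
\begin{equation*}
W_{q,p}^{2,1}(\Om\times\BR_+)\hookrightarrow BUC([0,\infty),B_{q,p}^{2-2/p}(\Om)).
\end{equation*}
Next I would use the Besov embedding $B_{q,p}^{2-2/p}(\Om)\hookrightarrow BUC^1(\overline\Om)$; this is valid whenever $2-2/p-N/q>1$, i.e.\ $2/p+N/q<1$, which is precisely the last condition in \eqref{pq}. Composing the two embeddings produces the desired inequality with constant $M_1^{(2)}$.

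For \eqref{lemm:embed_3}, the plan is to apply the mixed derivative theorem (a standard consequence of complex interpolation for the vector-valued Bessel potential scale): since the pair $(W_p^1(\BR_+,L_q(\Om)),L_p(\BR_+,W_q^2(\Om)))$ sits in a UMD-valued setting, one has
\begin{equation*}
W_p^1(\BR_+,L_q(\Om))\cap L_p(\BR_+,W_q^2(\Om))\hookrightarrow H_p^{1/2}(\BR_+,W_q^1(\Om)).
\end{equation*}
Concretely, I would extend $f$ to $\tilde f\in W_p^1(\BR,L_q(\Om))\cap L_p(\BR,W_q^2(\Om))$ by a bounded extension operator, then apply the complex interpolation identity $[L_p(\BR,L_q),W_p^1(\BR,L_q)]_{1/2}=H_p^{1/2}(\BR,L_q)$ used in Step 2 of the previous section together with $[L_q(\Om),W_q^2(\Om)]_{1/2}\supset W_q^1(\Om)$ to pick up one spatial derivative and one half of a temporal derivative simultaneously. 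Taking $M_1$ to be the maximum of the three constants obtained in the three parts finishes the proof.

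The only mildly delicate step is \eqref{lemm:embed_3}: the naive interpolation is the one on each factor separately, and one must be careful that the mixed derivative theorem—not a simple interpolation inequality—is what actually yields simultaneous control of both a half-time derivative and a full space derivative. Once this is invoked as a known tool (applicable because $L_q(\Om)$ is UMD for $1<q<\infty$), the rest is routine; so I expect \eqref{lemm:embed_3} to be the main obstacle only in the sense of citing the right black box, not in any substantive computation.
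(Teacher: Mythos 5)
Your proposal follows the paper's own proof essentially line by line: part (1) is the Sobolev embedding $W_q^1(\Om)\hookrightarrow BUC(\overline\Om)$ for $q>N$ (the paper cites Adams--Fournier, Theorem 4.12), part (2) composes the embedding \eqref{170615_1} with the Besov embedding $B_{q,p}^{2-2/p}(\Om)\hookrightarrow BUC^1(\overline\Om)$ under $2/p+N/q<1$ (the paper cites Triebel, Theorem 4.6.1), and part (3) is precisely the mixed derivative theorem (the paper cites Meyries--Schnaubelt, Proposition 3.2/Remark 3.3, and Shibata's preprint). One cosmetic remark: in your sketch of (3) you wrote $[L_q(\Om),W_q^2(\Om)]_{1/2}\supset W_q^1(\Om)$, but to land in $W_q^1$ the relevant direction is $[L_q(\Om),W_q^2(\Om)]_{1/2}\hookrightarrow W_q^1(\Om)$ (indeed equality holds here); since you state the mixed derivative embedding correctly and treat it as a citable black box, this does not affect the validity of the argument.
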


\begin{proof}
\eqref{lemm:embed_1}.
See e.g. \cite[Theorem 4.12]{AF03}.

\eqref{lemm:embed_2}.
It follows from \eqref{170615_1} and
$B_{q,p}^{2-2/p}(\Om)\hookrightarrow BUC^1(\overline{\Om})$ under the condition \eqref{pq}
(cf. \cite[Theorem 4.6.1]{Triebel78}).

\eqref{lemm:embed_3}.
See e.g. \cite[Proposition 3.2, Remark 3.3]{MS12}, \cite{Shibata17pre}.
\end{proof}

%Let
%%\begin{equation*}
%$W_{\infty,\infty}^{1,1}(\Om\times\BR_+)
%= W_\infty^1(\BR_+,L_\infty(\Om))\cap L_\infty(\BR_+, W_\infty^1(\Om))$, and then one has
%%\end{equation*}
%
%
%
%
%\begin{lemm}\label{trace:0}
%Let $p$, $q$ satisfy \eqref{pq}, and let $M_1$ be the same positive constant as in Lemma $\ref{lemm:embed}$.
%Then,
%for any $f\in W_{\infty,\infty}^{1,1}(\Om\times\BR_+)$ and $g\in H_{q,p}^{1,1/2}(\Om\times\BR_+)$,
%\begin{equation*}
%\|fg\|_{H_{q,p}^{1,1/2}(\Om\times\BR_+)}
%\leq 2M_1\|f\|_{W_{\infty,\infty}^{1,1}(\Om\times\BR_+)}\|g\|_{H_{q,p}^{1,1/2}(\Om\times\BR_+)}.
%\end{equation*}
%\end{lemm}
%
%
%\begin{proof}
%\end{proof}

We prove Theorem \ref{theo:main} in the remaining part of this section
by using Theorem \ref{theo:main2} and the contraction mapping theorem.
Let $\ga_0=\si_0$, where $\si_0$ is the same positive constant as in Theorem \ref{theo:main2}.
%We here set
%\begin{align*}
%&X_{q,p}^1= W_{q,p,-\ga_0}^{2,1}(\Om\times\BR_+)^N, \quad
%X_{q,p}^2= L_{p,-\ga_0}(\BR_+,W_q^1(\Om)) \\
%&\|(\Bu,\Fp)\|_{X_{q,p}}=\|\Bu\|_{W_{q,p,-\ga_0}^{2,1}(\Om\times\BR_+)}+
%\|\Fp\|_{L_{p,-\ga_0}(\BR_+,W_q^1(\Om))},
%\end{align*}
We here define a closed set $X_{q,p}^{\ga_0}(R)$ by
\begin{align*}
X_{q,p}^{\ga_0}(R)&=
\{(\Bu,\Fp)\in W_{q,p}^{2,1}(\Om\times\BR_+)^N\times L_p(\BR_+,W_q^1(\Om))
\mid \|(\Bu,\Fp)\|_{X_{q,p}^{\ga_0}}\leq R, \\
&\qquad\Bu=0 \text{ on $S$, }
\lim_{t\to 0+}\|\Bu-\Ba\|_{B_{q,p}^{2-2/p}(\Om)}=0\}, 
\end{align*}
where $0<R<1$ is a positive constant  and
$\|(\Bu,\Fp)\|_{X_{q,p}^{\ga_0}}= \|\Bu\|_{Y_{q,p}^{\ga_0}}+\|\Fp\|_{Z_{q,p}^{\ga_0}}$ with 
\begin{equation*}
\|\Bu\|_{Y_{q,p}^{\ga_0}}=\|e^{\ga_0 t}(\pa_t\Bu,\Bu,\nabla\Bu,\nabla^2\Bu)\|_{L_p(\BR_+,L_q(\Om))}, \quad 
\|\Fp\|_{Z_{q,p}^{\ga_0}}=\|e^{\ga_0 t}\Fp\|_{L_p(\BR_+,W_q^1(\Om))}.
\end{equation*}
In addition, by e.g. \cite{LMV17pre}, we know  the following characterization:
\begin{equation}\label{chara:1}
{}_0 H_{q,p}^{1,1/2}(\Om\times\BR_+)
=\{f\in H_{q,p}^{1,1/2}(\Om\times\BR_+) \mid f|_{t=0}=0 \text{ in $L_q(\Om)$}\}
\end{equation}
under the condition \eqref{pq}.

%Let $(\BU,P)=(e^{A_q t}\Ba,K(e^{A_q t}\Ba))$,
%and then by Corollary \ref{coro:sg}
%\begin{equation}\label{170225_5}
%\|(\BU,P)\|_X \leq c_1\|\Ba\|_{D_{q,p}(\Om)}
%\end{equation}
%with some positive constant $c_1=c_1(N,d,p,q,\mu,\si_0)$.
%We replace $(\Bu,\Fp)$ by $(\BU+\Bu,P+\Fp)$ in \eqref{eq:11}-\eqref{eq:15}
%in order to obtain
%\begin{equation}\label{eq:hom1}
%\left\{\begin{aligned}
%\pa_t\Bu-\Di\BT(\Bu,\Fp)=\BF(\BU+\Bu)& && \text{in $\Om$, $t>0$,} \\
%\di\Bu=G(\BU+\Bu)=\di\BG(\BU+\Bu)& && \text{in $\Om$, $t>0$,} \\
%\BT(\Bu,\Fp)\Be_N=\BH(\BU+\Bu)\Be_N& && \text{on $\Ga$, $t>0$,} \\
%\Bu=0 & && \text{on $S$, $t>0$,} \\
%\Bu|_{t=0}=0 & && \text{in $\Om$.}
%\end{aligned}\right.
%\end{equation}
%It suffices to construct solutions of \eqref{eq:hom1}
%to prove Theorem \ref{theo:main},
%so that we solve the system \eqref{eq:hom1} in what follows.
%Let $(\bar{\Bu},\bar{\Fp})\in X(r)$,
%and we consider, for $\Bz=\BU+\bar\Bu$,
%\begin{equation}\label{eq:hom2}
%\left\{\begin{aligned}
%\pa_t\Bu-\Di\BT(\Bu,\Fp)=\BF(\Bz)& && \text{in $\Om$, $t>0$,} \\
%\di\Bu=G(\Bz)=\di\BG(\Bz)& && \text{in $\Om$, $t>0$,} \\
%\BT(\Bu,\Fp)\Be_N=\BH(\Bz)\Be_N& && \text{on $\Ga$, $t>0$,} \\
%\Bu=0 & && \text{on $S$, $t>0$,} \\
%\Bu|_{t=0}=0 & && \text{in $\Om$.}
%\end{aligned}\right.
%\end{equation}
%Here we assume that $0<r<1$ and $r$ can be chosen small enough eventually.

Let $(\Bv,\Fq)\in X_{q,p}^{\ga_0}(R)$, and one considers
\begin{equation}\label{170522_1}
\left\{\begin{aligned}
\pa_t\Bu-\Di\BT(\Bu,\Fp) = \BF(\Bv)& && \text{in $\Om$, $t>0$,} \\
\di\Bu = G(\Bu) = \di\BG(\Bv)& && \text{in $\Om$, $t>0$,} \\
\BT(\Bu,\Fp)\Be_N = \BH(\Bv)\Be_N& && \text{on $\Ga$, $t>0$,} \\
\Bu = 0& && \text{on $S$, $t>0$,} \\
\Bu|_{t=0} = \Ba& && \text{in $\Om$.}
\end{aligned}\right.
\end{equation}
%We here recall the notation \eqref{nonlinear:1} and set
%\begin{equation*}
%W_{\infty,\infty}^{1,1}(\Om\times\BR_+)
%= W_\infty^1(\BR_+,L_\infty(\Om))\cap L_\infty(\BR_+, W_\infty^1(\Om)).
%\end{equation*}
%Then the following lemma holds.
Here we prove

\begin{lemm}\label{lemm:nonl}
Let $p$, $q$ satisfy \eqref{pq}, and let $\ga_0$ be as above.
Then there exists positive constants $M_3\geq M_2 \geq 1$ such that the following assertions hold true.
%, for any $i,j,k,l,m,n=1,\dots,N$, the following assertions hold true.
\begin{enumerate}[$(1)$]
\item\label{lemm:nonl_1}
For any $\Bv_r$ with $\|\Bv_r\|_{Y_{q,p}^{\ga_0}}\leq 1$ $(r=1,2)$, we have
\begin{align*}
&\left\|\BU_1\left(\int_0^t\nabla\Bv_2(\xi,s)\intd s\right)
-\BU_1\left(\int_0^t\nabla\Bv_1(\xi,s)\intd s\right)\right\|_{L_\infty(\BR_+,L_\infty(\Om))} \\
&\leq M_2\|\Bv_2-\Bv_1\|_{Y_{q,p}^{\ga_0}}, \\
&\sum_{i,j,k,l=1}^N\left\|V_{ijk}^l\left(\int_0^t\nabla\Bv_2(\xi,s)\intd s\right)
-V_{ijk}^l\left(\int_0^t\nabla\Bv_1(\xi,s)\intd s\right)\right\|_{L_\infty(\BR_+,L_\infty(\Om))} \\
&\leq M_2\|\Bv_2-\Bv_1\|_{Y_{q,p}^{\ga_0}}, \\
&\sum_{
\stackrel{\text{\tiny{$i,j,k,l,$}}}{m,n=1}
}^N\left\|W_{ijklm}^n\left(\int_0^t\nabla\Bv_2(\xi,s)\intd s\right)
-W_{ijklm}^n\left(\int_0^t\nabla\Bv_1(\xi,s)\intd s\right)\right\|_{L_\infty(\BR_+,L_\infty(\Om))} \\
&\leq M_2\|\Bv_2-\Bv_1\|_{Y_{q,p}^{\ga_0}}, \\
&\left\|\BU_r\left(\int_0^t\nabla\Bv_2(\xi,s)\intd s\right)
-\BU_r\left(\int_0^t\nabla\Bv_1(\xi,s)\intd s\right)\right\|_{B_q(\Om\times\BR_+)} \\
&\leq M_2\|\Bv_2-\Bv_1\|_{Y_{q,p}^{\ga_0}} \quad (r=2,\dots,7).
\end{align*}
\item\label{lemm:nonl_2}
For any $\Bv$ with $\|\Bv\|_{Y_{q,p}^{\ga_0}}\leq 1$, we have
\begin{align*}
\left\|\BU_1\left(\int_0^t\nabla\Bv(\xi,s)\intd s\right)\right\|_{L_\infty(\BR_+,L_\infty(\Om))}
&\leq M_2\|\Bv\|_{Y_{q,p}^{\ga_0}}, \\
\sum_{i,j,k,l=1}^N\left\|V_{ijk}^l\left(\int_0^t\nabla\Bv(\xi,s)\intd s\right)\right\|_{L_\infty(\BR_+,L_\infty(\Om))}
&\leq M_2\|\Bv\|_{Y_{q,p}^{\ga_0}}, \\
\left\|\BU_r\left(\int_0^t\nabla\Bv(\xi,s)\intd s\right)\right\|_{B_q(\Om\times\BR_+)}
&\leq M_2\|\Bv\|_{Y_{q,p}^{\ga_0}} \quad (r=2,\dots,7).
\end{align*}
%and also for $r=2,\dots,6$
%\begin{equation*}
%\lim_{T\to 0+}\left\|\BU_r\left(\int_0^t\nabla\Bv(\xi,s)\intd s\right)\right\|_{BUC^1([0,T],BUC(\overline{\Om}))}=0 .
%\end{equation*}
\item\label{lemm:nonl_3}
For any $\Bv$ with $\|\Bv\|_{Y_{q,p}^{\ga_0}}\leq 1$, we have
\begin{equation*}
\sum_{i,j,k,l,m,n=1}^N\left\|W_{ijklm}^n\left(\int_0^t\nabla\Bv(\xi,s)\intd s\right)\right\|_{L_\infty(\BR_+,L_\infty(\Om))}
 \leq M_3, \\
%\left\|\BU_7\left(\int_0^t\nabla\Bv(\xi,s)\intd s\right)\right\|_{B_q(\Om\times\BR_+)}
%&\leq M_2.
\end{equation*}
\end{enumerate}
\end{lemm}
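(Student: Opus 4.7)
My approach rests on two structural facts: whenever $\|\Bv\|_{Y_{q,p}^{\ga_0}}\le 1$, the Volterra integral $\BI(\xi,t):=\int_0^t\nabla\Bv(\xi,s)\,ds$ takes values in a fixed bounded subset $\CK\subset\BR^{N\times N}$, and every $\BU_i$, $V_{ijk}^l$, $W_{ijklm}^n$ is a polynomial, hence together with all of its derivatives is Lipschitz on $\CK$. The vanishing hypotheses $\BU_i(\BO)=\BO$ and $V_{ijk}^l(\BO)=0$ will supply the extra factor of $\|\Bv\|_{Y_{q,p}^{\ga_0}}$ needed in~\eqref{lemm:nonl_2}, while~\eqref{lemm:nonl_3} needs only boundedness, which is why no linear factor in $\|\Bv\|_{Y_{q,p}^{\ga_0}}$ is claimed there.

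First I would establish the boundedness of $\BI$. Because $\pa_t(e^{\ga_0 t}\Bv)=\ga_0 e^{\ga_0 t}\Bv+e^{\ga_0 t}\pa_t\Bv$, the weighted profile $e^{\ga_0 t}\Bv$ lies in $W_{q,p}^{2,1}(\Om\times\BR_+)$ with
\[
\|e^{\ga_0 t}\Bv\|_{W_{q,p}^{2,1}(\Om\times\BR_+)}\le (1+\ga_0)\|\Bv\|_{Y_{q,p}^{\ga_0}}.
\]
Applying Lemma~\ref{lemm:embed}\eqref{lemm:embed_2} delivers the pointwise bound $|\nabla\Bv(\xi,t)|\le CM_1 e^{-\ga_0 t}\|\Bv\|_{Y_{q,p}^{\ga_0}}$, and integration in $s$ gives $\|\BI\|_{L_\infty(\BR_+,L_\infty(\Om))}\le (CM_1/\ga_0)\|\Bv\|_{Y_{q,p}^{\ga_0}}$. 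The same identity applied to $\Bv_2-\Bv_1$ yields $\|\BI_2-\BI_1\|_{L_\infty L_\infty}\le C\|\Bv_2-\Bv_1\|_{Y_{q,p}^{\ga_0}}$. Pairing $s\mapsto e^{-\ga_0 s}\in L_{p'}(\BR_+)$ against $s\mapsto e^{\ga_0 s}\|\nabla^2\Bv(\cdot,s)\|_{L_q(\Om)}\in L_p(\BR_+)$ by Hölder and Minkowski yields in addition $\|\int_0^t\nabla^2\Bv\,ds\|_{L_\infty(\BR_+,L_q(\Om))}\le C\|\Bv\|_{Y_{q,p}^{\ga_0}}$.

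With these ingredients the three $L_\infty L_\infty$ estimates in~\eqref{lemm:nonl_1} follow from the mean-value inequality for polynomials restricted to $\CK$: if $\Phi\in\{\BU_1,V_{ijk}^l,W_{ijklm}^n\}$ then $|\Phi(\BI_2)-\Phi(\BI_1)|\le\sup_{\BX\in\CK}|D\Phi(\BX)|\,|\BI_2-\BI_1|$. For the $B_q(\Om\times\BR_+)$-Lipschitz estimate of $\BU_r$ ($r=2,\dots,7$) I would decompose the norm into its $W_\infty^1(\BR_+;L_\infty(\Om))$- and $L_\infty(\BR_+;W_q^1(\Om))$-components and apply the chain rule
\[
\pa_t\BU_r(\BI)=\sum_{m,n}\frac{\pa\BU_r}{\pa X_{mn}}(\BI)\,\pa_n v_m,\qquad
\nabla\BU_r(\BI)=\sum_{m,n}\frac{\pa\BU_r}{\pa X_{mn}}(\BI)\,\nabla\!\int_0^t\pa_n v_m\,ds.
\]
Splitting a difference of products as $A_2B_2-A_1B_1=(A_2-A_1)B_2+A_1(B_2-B_1)$ and using that each $\pa\BU_r/\pa X_{mn}$ is itself a polynomial (hence Lipschitz on $\CK$), the coefficient differences are controlled in $L_\infty L_\infty$ by $|\BI_2-\BI_1|$, while the factor differences involve either $\nabla(\Bv_2-\Bv_1)$ (bounded in $L_\infty L_\infty$ by the step above) or $\int_0^t\nabla^2(\Bv_2-\Bv_1)\,ds$ (bounded in $L_\infty L_q$ by the Hölder argument above).

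Parts~\eqref{lemm:nonl_2} and~\eqref{lemm:nonl_3} are then corollaries: setting $\Bv_1=0$ in~\eqref{lemm:nonl_1} and invoking $\BU_i(\BO)=\BO$, $V_{ijk}^l(\BO)=0$ converts each Lipschitz bound into the corresponding absolute bound, and~\eqref{lemm:nonl_3} follows from the plain polynomial boundedness of $W_{ijklm}^n$ on $\CK$. The hard part is the bookkeeping in paragraph three: one must match each chain-rule product to the correct component of $B_q$, using the pointwise control of $\nabla\Bv$ for the time-derivative pieces and the Hölder-based $L_\infty L_q$ control of $\int_0^t\nabla^2\Bv\,ds$ for the space-derivative pieces, so that precisely one factor of $\|\Bv_2-\Bv_1\|_{Y_{q,p}^{\ga_0}}$ is accumulated per term.
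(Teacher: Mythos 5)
Your proposal is correct and follows essentially the same route as the paper: establish a uniform pointwise bound on $\int_0^t\nabla\Bv\,ds$ (and its $L_\infty W_q^1$ analogue via H\"older in time), then control the polynomial coefficients by the mean-value inequality, using the null conditions \eqref{null} to convert the Lipschitz bounds of part (1) into part (2), and plain boundedness of $W_{ijklm}^n$ for part (3). The only cosmetic difference is that you derive the $L_\infty L_\infty$ bound on $\nabla\Bv$ by embedding $e^{\ga_0 t}\Bv$ into $BUC([0,\infty),BUC^1(\overline\Om))$ via Lemma \ref{lemm:embed}\eqref{lemm:embed_2}, whereas the paper applies Lemma \ref{lemm:embed}\eqref{lemm:embed_1} pointwise in time and H\"older on $e^{-\ga_0 s}$; both give the same type of constant, and the rest of the argument (chain rule for $\pa_t$ and $\nabla$, product splitting $A_2B_2-A_1B_1$, matching factors to the $W_\infty^1L_\infty$ versus $L_\infty W_q^1$ components of $B_q$) matches the paper's proof.
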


\begin{proof}
For $F(\BX)$ with $\BX=(X_{\al\beta})\in\BR^{N\times N}$, 
we denote the derivative of $F(\BX)$ with respect to $X_{\al\beta}$ by $\pa_{(\al,\beta)}F(\BX)$ 
for $(\al,\beta)\in\CN:=\{1,\dots,N\}\times \{1,\dots,N\}$.
In addition, we choose $m_2\geq 1$ large enough so that for $r_1=1,\dots,7$ and $r_2=2,\dots,7$
\begin{align*}%\label{M_2}
\max\left\{\sum_{(\al,\beta)\in\CN}\left|\pa_{(\al,\beta)}\BU_{r_1}\left(\BX\right)\right|\mid |\BX| \leq 2M_1(p'\ga_0)^{-1/p'}\right\} 
&\leq m_2, \\
\max\left\{\sum_{(\al',\beta')\in\CN}\sum_{(\al,\beta)\in\CN}\left|\pa_{(\al',\beta')}\pa_{(\al,\beta)}\BU_{r_2}
\left(\BX\right)\right|\mid |\BX| \leq 2M_1(p'\ga_0)^{-1/p'}\right\} 
&\leq m_2, \notag  \\
\max\left\{\sum_{(\al,\beta)\in\CN}\sum_{i,j,k,l=1}^N
\left|\pa_{(\al,\beta)}V_{ijk}^l\left(\BX\right)\right|\mid |\BX| \leq 2M_1(p'\ga_0)^{-1/p'}\right\} 
&\leq m_2, \notag \\
\max\left\{\sum_{(\al,\beta)\in\CN}\sum_{i,j,k,l,m,n=1}^N
\left|\pa_{(\al,\beta)}W_{ijklm}^n\left(\BX\right)\right|\mid |\BX| \leq 2M_1(p'\ga_0)^{-1/p'}\right\} 
&\leq m_2, \notag %\\
%\left|\BU_7(\BO)\right|+\sum_{i,j,k,l,m,n=1}^N\left|W_{ijklm}^n(\BO)\right|&\leq m_2,\notag
\end{align*}
where $p'=p/(p-1)$ and $M_1$ is the same positive constant as in Lemma \ref{lemm:embed}.

\eqref{lemm:nonl_1}.
First, we consider $\BU_1$.
Let us write $\Bv_r=(v_{r1},\dots,v_{rN})^\SST$ for $r=1,2$.
It then holds that
\begin{align}\label{170709_7}
&\BU_1\left(\int_0^t\nabla\Bv_2(\xi,s)\intd s\right)
-\BU_1\left(\int_0^t\nabla\Bv_1(\xi,s)\intd s\right) \\
&=
\sum_{(\al,\beta)\in\CN}\int_0^1(\pa_{(\al,\beta)}\BU_1)
\left(\te\int_0^t\nabla\Bv_2(\xi,s)\intd s+(1-\te)\int_0^t\nabla\Bv_1(\xi,s)\intd s\right)\intd\te \notag \\
&\quad \cdot\left(\int_0^t\pa_\beta v_{2\al}(\xi,s)\intd s-\int_0^t\pa_\beta v_{1\al}(\xi,s)\intd s\right) \notag
\end{align}
and that by Lemma \ref{lemm:embed} \eqref{lemm:embed_1}
\begin{align}\label{170709_1}
&\left\|\te\int_0^t\nabla\Bv_2(\xi,s)\intd s+(1-\te)\int_0^t\nabla\Bv_1(\xi,s)\intd s\right\|_{L_\infty(\BR_+,L_\infty(\Om))}\\
&\leq \sum_{r=1}^2\int_0^\infty\|\nabla\Bv_r(\cdot,s)\|_{L_\infty(\Om)}\intd s
\leq M_1\sum_{r=1}^2\int_0^\infty\|\Bv_r(\cdot,s)\|_{W_q^2(\Om)}\intd s \notag \\
&\leq M_1\sum_{r=1}^2\left(\int_0^\infty e^{-p'\ga_0 s}\intd s\right)^{1/p'}\|\Bv_r\|_{Y_{q,p}^{\ga_0}}
\leq 2M_1(p'\ga_0)^{-1/p'}. \notag
\end{align}
One similarly obtains %for any $\By\in Y_{q,p}^{\ga_0}$
\begin{align}
&\left\|\int_0^t\nabla \By(\xi,s)\intd s \right\|_{L_\infty(\BR_+,L_\infty(\Om))}
\leq M_1(p'\ga_0)^{-1/p'}\|\By\|_{Y_{q,p}^{\ga_0}}, \label{170709_3} \\
&\left\|\int_0^t\nabla\By(\xi,s)\intd s\right\|_{L_\infty(\BR_+,W_q^1(\Om))} 
\leq (p'\ga_0)^{-1/p'}\|\By\|_{Y_{q,p}^{\ga_0}}. \label{170709_4}
\end{align} 
%where we have used $M_1\geq 1$ in order to obtain the second inequality.
Combining \eqref{170709_7} with \eqref{170709_1} and \eqref{170709_3} furnishes that
\begin{align*}
&\left\|\BU_1\left(\int_0^t\nabla\Bv_2(\xi,s)\intd s\right)
-\BU_1\left(\int_0^t\nabla\Bv_1(\xi,s)\intd s\right)\right\|_{L_\infty(\BR_+,L_\infty(\Om))} \\
%&\leq m_2\int_0^\infty\left\|\nabla(\Bv_2(\cdot,s)-\Bv_1(\cdot,s))\right\|_{L_\infty(\Om)}\intd s \\
&\leq m_2M_1(p'\ga_0)^{-1/p'}\|\Bv_2-\Bv_1\|_{Y_{q,p}^{\ga_0}}.
\end{align*}
Thus we choose $M_2\geq 1$ large enough so that $M_2\geq m_2M_1(p'\ga_0)^{-1/p'}$
in order to obtain the required estimate for $\BU_1$.
Analogously, one can prove that the required estimates hold true for $V_{ijk}^l$ and $W_{ijklm}^n$.

Next, we consider $\BU_r$ for $r=2,\dots,7$.
Following \eqref{170709_7}, we observe that for $\pa\in\{\pa_t,\pa_1,\dots,\pa_N\}$ 
\begin{align*}%\label{170709_11}
&\pa\left\{\BU_r\left(\int_0^t\nabla\Bv_2(\xi,s)\intd s\right)
-\BU_r\left(\int_0^t\nabla\Bv_1(\xi,s)\intd s\right)\right\} \\
&=\sum_{(\al',\beta')\in\CN}\sum_{(\al,\beta)\in\CN} \notag \\
& \int_0^1(\pa_{(\al',\beta')}\pa_{(\al,\beta)}\BU_r)
\left(\te\int_0^t\nabla\Bv_2(\xi,s)\intd s+(1-\te)\int_0^t\nabla\Bv_1(\xi,s)\intd s\right) \notag \\
& \cdot\pa \left(\te\int_0^t\pa_{\beta'} v_{2\al'}(\xi,s)\intd s+(1-\te)\int_0^t\pa_{\beta'} v_{1\al'}(\xi,s)\intd s\right) \intd\te \notag \\
& \cdot\left(\int_0^t\pa_\beta v_{2\al}(\xi,s)\intd s-\int_0^t\pa_\beta v_{1\al}(\xi,s)\intd s\right) \notag \\
& +
\sum_{(\al,\beta)\in\CN}\int_0^1(\pa_{(\al,\beta)}\BU_r)
\left(\te\int_0^t\nabla\Bv_2(\xi,s)\intd s+(1-\te)\int_0^t\nabla\Bv_1(\xi,s)\intd s\right)\intd\te \notag \notag \\
&\cdot\pa\left(\int_0^t\pa_\beta v_{2\al}(\xi,s)\intd s-\int_0^t\pa_\beta v_{1\al}(\xi,s)\intd s\right).
\notag
\end{align*}
By \eqref{170709_1}, \eqref{170709_3}, \eqref{170709_4}, and Lemma \ref{lemm:embed} \eqref{lemm:embed_2},
it holds that
\begin{align*}
&\left\|\pa_t\left\{\BU_r\left(\int_0^t\nabla\Bv_2(\xi,s)\intd s\right)
-\BU_r\left(\int_0^t\nabla\Bv_1(\xi,s)\intd s\right)\right\}\right\|_{L_\infty(\BR_+,L_\infty(\Om))} \\
&\leq \left(2m_2M_1^2(p'\ga_0)^{-1/p'}+m_2M_1\right)\|\Bv_2-\Bv_1\|_{Y_{q,p}^{\ga_0}}, \\
&\left\|\pa_j\left\{\BU_r\left(\int_0^t\nabla\Bv_2(\xi,s)\intd s\right)
-\BU_r\left(\int_0^t\nabla\Bv_1(\xi,s)\intd s\right)\right\}\right\|_{L_\infty(\BR_+,L_q(\Om))} \\
&\leq \left(2m_2M_1(p'\ga_0)^{-2/p'}+m_2(p'\ga_0)^{-1/p'}\right)\|\Bv_2-\Bv_1\|_{Y_{q,p}^{\ga_0}},
\end{align*}
for $j=1,\dots,N$. Similarly to the case $\BU_1$, we also have
\begin{align*}
&\left\|\BU_r\left(\int_0^t\nabla\Bv_2(\xi,s)\intd s\right)
-\BU_r\left(\int_0^t\nabla\Bv_1(\xi,s)\intd s\right)\right\|_{L_\infty(\BR_+,L_\infty(\Om))} \\
&\leq m_2M_1(p'\ga_0)^{-1/p'}\|\Bv_2-\Bv_1\|_{Y_{q,p}^{\ga_0}}, \\
&\left\|\BU_r\left(\int_0^t\nabla\Bv_2(\xi,s)\intd s\right)
-\BU_r\left(\int_0^t\nabla\Bv_1(\xi,s)\intd s\right)\right\|_{L_\infty(\BR_+,L_q(\Om))} \\
&\leq m_2(p'\ga_0)^{-1/p'}\|\Bv_2-\Bv_1\|_{Y_{q,p}^{\ga_0}}.
\end{align*}
One thus obtains the required estimates for $\BU_r$ $(r=2,\dots,7)$
by choosing a larger $M_2\geq 1$ if necessary.
This completes the proof of Lemma \ref{lemm:nonl} \eqref{lemm:nonl_1}.

%Let $\CN=\{1,\dots,N\}\times\{1,\dots,N\}$. Then
%\begin{equation*}
%\pa_j \BU_2\left(\int_0^t\nabla\Bz(\xi,s)\intd s\right)
%=\sum_{(k,l)\in\CN}\left(\pa_{(k,l)}\BU_2\right)\left(\int_0^t\nabla\Bz(\xi,s)\intd s\right)
%\int_0^t \pa_j\pa_k z_l(\xi,s)\intd s,
%\end{equation*}
%where $\pa_{(k,l)}$ denotes the derivative with respect to the $(k,l)$-component
%and $\Bz=(z_1,\dots,z_N)^\SST$.
%By these calculations, we have
%\begin{align*}
%&\|e^{\si_0 t} \pa_ j G(\Bz)\|_{L_p(\BR_+,L_q(\Om))} \\
%&\leq \sum_{(k,l)\in\CN}\left\|\int_0^t\pa_ j \pa_k z_l(\xi,s)\intd s
%\left(\pa_{(k,l)}\BU_2\right)\left(\int_0^t\nabla\Bz(\xi,s)\intd s\right)
%e^{\si_ 0 t}\nabla\Bz\right\|_{L_p(\BR_+,L_q(\Om))} \\
%&+\left\|\BU_2\left(\int_0^t\nabla\Bz(\xi,s)\intd s\right)
%e^{\si_0 t}\pa_j\nabla\Bu\right\|_{L_p(\BR_+,L_q(\Om))} \\
%&\leq \sum_{(k,l)\in\CN}\left\|\left(\pa_{(k,l)}\BU_2\right)
%\left(\int_0^t\nabla\Bz(\xi,s)\intd s\right)\right\|_{L_\infty(\BR_+,L_\infty(\Om))} \\
%&\cdot\left\|\int_0^t\pa_ j \pa_k z_l(\xi,s)\intd s\right\|_{L_\infty(\BR_+,L_q(\Om))}
%\left\|e^{\si_0 t}\nabla\Bz\right\|_{L_p(\BR_+,L_\infty(\Om))} \\
%&+\left\|\BU_2\left(\int_0^t\nabla\Bz(\xi,s)\intd s\right)\right\|_{L_\infty(\BR_+,L_\infty(\Om))}
%\left\|e^{\si_0 t}\pa_j\nabla\Bz\right\|_{L_p(\BR_+,L_q(\Om))} \\
%&\leq 
%M\|\Bz\|_X\left\{
%\left\|\BU_2\left(\int_0^t\nabla\Bz(\xi,s)\intd s\right)\right\|_{L_\infty(\BR_+,L_\infty(\Om))} \right. \\
%&\left.+\|\Bz\|_X\sum_{(k,l)\in\CN}\left\|
%\left(\pa_{(k,l)}\BU_2\right)\left(\int_0^t\nabla\Bz(\xi,s)\intd s\right)
%\right\|_{L_\infty(\BR_+,L_\infty(\Om))}\right\}.
%\end{align*}

\eqref{lemm:nonl_2}.
The estimates follows from \eqref{null} and \eqref{lemm:nonl_1} with $(\Bv_2, \Bv_1)=(\Bv,0)$  immediately.
This completes the proof of Lemma \ref{lemm:nonl} \eqref{lemm:nonl_2}.

\eqref{lemm:nonl_3}.
%Let $M_2\geq m_2$ and $(\Bv_1,\Bv_2)=(0,\Bv)$ in \eqref{lemm:nonl_1}.
%One then has 
%\begin{align*}
%&\left\|W_{ijklm}^n\left(\int_0^t\nabla\Bv(\xi,s)\intd s\right)\right\|_{L_\infty(\BR_+,L_\infty(\Om))} \\
%&\leq M_2\|\Bv\|_{Y_{q,p}^{\ga_0}}+\|W_{ijklm}^n(\BO)\|_{L_\infty(\BR_+,L_\infty(\Om))}\leq 2M_2,
%\end{align*}
%and also 
%\begin{equation*}
%\left\|\BU_7\left(\int_0^t\nabla\Bv(\xi,s)\intd s\right)
%\right\|_{B_q(\Om\times\BR_+)}  \leq 2M_2.
%\end{equation*}
%Noting $W_{ijklm}^n(\BO)\in L_\infty(\BR_+,L_\infty(\Om))$,
One sets $(\Bv_2, \Bv_1)=(\Bv,0)$ in the inequality proved in \eqref{lemm:nonl_1} in order to obtain
\begin{align*}
\sum_{
\stackrel{\text{\tiny{$i,j,k,l,$}}}{m,n=1}
}^N\left\|W_{ijklm}^n\left(\int_0^t\nabla\Bv(\xi,s)\intd s\right)
-W_{ijklm}^n\left(\BO\right)\right\|_{L_\infty(\BR_+,L_\infty(\Om))} 
\leq M_2\|\Bv\|_{Y_{q,p}^{\ga_0}}\leq M_2,
\end{align*}
which implies that
\begin{align*}
&\sum_{i,j,k,l,m,n=1}^N\left\|W_{ijklm}^n\left(\int_0^t\nabla\Bv(\xi,s)\intd s\right)
\right\|_{L_\infty(\BR_+,L_\infty(\Om))}  \\
&\leq 
M_2+
\sum_{i,j,k,l,m,n=1}^N\left\|W_{ijklm}^n(\BO)\right\|_{L_\infty(\BR_+,L_\infty(\Om))}.
\end{align*}
Noting $W_{ijklm}^n(\BO)\in L_\infty(\BR_+,L_\infty(\Om))$,
we complete the proof of Lemma \ref{lemm:nonl} \eqref{lemm:nonl_3}
with $M_3=M_2+\sum_{i,j,k,l,m,n=1}^N\|W_{ijklm}^n(\BO)\|_{L_\infty(\BR_+,L_\infty(\Om))}$.
%Since $L_\infty(\BR_+,L_\infty(\Om))$ is a Banach algebra,
%the required estimate follows from the fact that 
%$W_{ijklm}^n(\BX)$ %, $\BU_7(\BX)$
%are polynomials with respect to $\BX=(X_{\al\beta})\in\BR^{N\times N}$ and 
%$W_{ijklm}^n(\BO)\in L_\infty(\BR_+,L_\infty(\Om))$.
%%, $B_q(\Om\times\BR_+)$ are Banach algebras (cf. Lemma \ref{lemm:H1/2} \eqref{lemm:H1/2_3}).
%This completes the proof of Lemma \ref{lemm:nonl} \eqref{lemm:nonl_3}.
\end{proof}

%\begin{coro}\label{trace:0}
%Let $p$, $q$ satisfy \eqref{pq}, and let $M_1$ be the same positive constant as in Lemma $\ref{lemm:embed}$.
%Then the following assertions hold true.
%\begin{enumerate}[$(1)$]
%\item\label{trace:0_1}
%For any $f\in W_{\infty,\infty}^{1,1}(\Om\times\BR_+)$ and $g\in H_{q,p}^{1,1/2}(\Om\times\BR_+)$, we have
%\begin{equation*}
%\|fg\|_{H_{q,p}^{1,1/2}(\Om\times\BR_+)}
%\leq 2M_1\|f\|_{W_{\infty,\infty}^{1,1}(\Om\times\BR_+)}\|g\|_{H_{q,p}^{1,1/2}(\Om\times\BR_+)}.
%\end{equation*}
%\item\label{trace:0_2}
%For any $\Bv$ with $\|\Bv\|_{Y_{q,p}^{\ga_0}}\leq 1$, we have
%\begin{align*}
%e^{\ga_0 t}G(\Bv)&\in {}_0 H_{q,p}^{1,1/2}(\Om\times\BR_+), \quad
%e^{\ga_0 t}\BG(\Bv) \in {}_0 W_p^1(\BR_+,L_q(\Om))^N, \\
%e^{\ga_0 t}\BH(\Bv)&\in {}_0 H_{q,p}^{1,1/2}(\Om\times\BR_+)^N.
%\end{align*}
%\end{enumerate}
%\end{coro}

%\begin{proof}
%\eqref{trace:0_1}.
%
%
%Note that
%\begin{equation*}
%e^{\ga_0 t}G(\Bv)=
%\BU_2\left(\int_0^t\nabla\Bv(\xi,s)\intd s\right):(e^{\ga_0 t}\nabla\Bv),
%\end{equation*}
%which, combined with \ref{lemm:nonl}, furnishes that
%\begin{align*}
%&\left\|e^{\ga_0 t}G(\Bv)\right\|_{W_p^1((0,T),L_q(\Om))} \\
%&\leq 
%\left\|\BU_2\left(\int_0^t\nabla\Bv(\xi,s)\intd s\right)\right\|_{BUC^1([0,T],BUC(\overline{\Om}))}
%\|\Bv\|_{Y_{q,p}^{\ga_0}}.
%\end{align*}
%Thus,
%\begin{equation*}
%e^{\ga_0 t}G(\Bv)\in L_p(\BR_+,L_q(\Om))\cap {}_0 W_p^1(\BR_+,L_q(\Om)).
%\end{equation*}
%This implies
%\begin{equation*}
%e^{\ga_0 t}G(\Bv)\in {}_0H_p^{1/2}(\BR_+,L_q(\Om)). %\cap {}_0 W_p^1(\BR_+,L_q(\Om)).
%\end{equation*}
%\end{proof}

Let $M_1$, $M_2$, and $M_3$ be the same positive constants as in
Lemmas \ref{lemm:embed}, Lemma \ref{lemm:nonl}, % respectively,
%and let  $M_3 \geq 1$ be another positive constant satisfying 
%\begin{align}\label{M_3}
%\left\|\int_0^t\nabla^2\Bv(\xi,s)\intd s\right\|_{L_\infty(\BR_+,L_q(\Om))}\leq M_3\|\Bv\|_{Y_{q,p}^{\ga_0}}
%\end{align}
and also assume that $(\Bv,\Fq), (\Bv_1,\Fq_1),(\Bv_2,\Fq_2)\in X_{q,p}^{\ga_0}(1)$ in what follows.
%$\|\Bv\|_{Y_{q,p}^{\ga_0}}\leq 1$ and
%$\|\Bv_r\|_{Y_{q,p}^{\ga_0}}\leq 1$ $(r=1,2)$ in the following estimates.

{\bf Estimates of $\BF(\Bv)$.}
One observes that
\begin{align*}
&e^{\ga_ 0 t}(\BF(\Bv_2)-\BF(\Bv_1)) \\
&=\left\{\BU_1\left(\int_0^t\nabla\Bv_2\intd s\right)-\BU_1\left(\int_0^t\nabla\Bv_1\intd s\right)\right\}e^{\ga_0 t}\pa_t\Bv_2 \\
&\quad + \BU_1\left(\int_0^t\nabla\Bv_1\intd s\right)e^{\ga_0 t}\pa_t(\Bv_2-\Bv_1) \\
&\quad+\left\{\BV\left(\int_0^t\nabla\Bv_2\intd s\right)-\BV\left(\int_0^t\nabla\Bv_1\intd s\right)\right\}e^{\ga_0 t}\nabla^2\Bv_2 \\
&\quad + \BV\left(\int_0^t\nabla\Bv_1\intd s\right)e^{\ga_0 t}\nabla^2(\Bv_2-\Bv_1) \\
&\quad+\left[\left\{\BW\left(\int_0^t\nabla\Bv_2\intd s\right)-\BW\left(\int_0^t\nabla\Bv_1\intd s\right)\right\}
\int_0^t\nabla^2\Bv_2\intd s\right]e^{\ga_0 t}\nabla\Bv_2 \\
&\quad+\left[\BW\left(\int_0^t\nabla\Bv_1\intd s\right)\int_0^t\nabla^2(\Bv_2-\Bv_1)\intd s\right]e^{\ga_0 t}\nabla\Bv_2 \\
&\quad +\left[\BW\left(\int_0^t\nabla\Bv_1\intd s\right)\int_0^t\nabla^2\Bv_1\intd s\right]e^{\ga_0 t}\nabla(\Bv_2-\Bv_1) \\
&=:I_1+\dots+I_7.
\end{align*}
By Lemma \ref{lemm:nonl}, we easily have
\begin{equation*}
\|I_j\|_{L_p(\BR_+,L_q(\Om))} 
\leq M_2\left(\|\Bv_2\|_{Y_{q,p}^{\ga_0}}+\|\Bv_1\|_{Y_{q,p}^{\ga_0}}\right)\|\Bv_2-\Bv_1\|_{Y_{q,p}^{\ga_0}}
\quad(j=1,\dots,4).
\end{equation*}
On the other hand, by Lemma \ref{lemm:embed} \eqref{lemm:embed_1}
\begin{equation*}
\left\|e^{\ga_0 t}\nabla\Bv_2\right\|_{L_p(\BR_+,L_\infty(\Om))}
\leq M_1\|e^{\ga_0 t}\nabla\Bv_2\|_{L_p(\BR_+,W_q^1(\Om))}\leq M_1\|\Bv_2\|_{Y_{q,p}^{\ga_0}}\leq M_1,
\end{equation*}
%\begin{align*}
%%&\left\|\int_0^t\nabla^2\Bv_2(\xi,s)\intd s\right\|_{L_\infty(\BR_+,L_q(\Om))}
%%\leq \int_0^\infty\|\nabla^2\Bv_2(\cdot,s)\|_{L_q(\Om)}\intd s \\
%%&\quad\leq \left(\int_0^\infty e^{-p'\ga_0 s}\intd s\right)^{1/p'}\|\Bv_2\|_{Y_{q,p}^{\ga_0}}
%%\leq 
%%\ga_0^{-(1-1/p)}\|\Bv_2\|_{Y_{q,p}^{\ga_0}} \quad (p'=p/(p-1)), \\ %, \\
%&\left\|e^{\ga_0 t}\nabla\Bv_2\right\|_{L_p(\BR_+,L_\infty(\Om))}
%\leq M_1\|e^{\ga_0 t}\nabla\Bv_2\|_{L_p(\BR_+,W_q^1(\Om))}\leq M_1\|\Bv_2\|_{Y_{q,p}^{\ga_0}}
%\end{align*}
%%where the second inequality follows from Lemma \ref{lemm:embed},
which, combined with Lemma \ref{lemm:nonl} and \eqref{170709_4}, furnishes
\begin{align*}
\|I_5\|_{L_p(\BR_+,L_q(\Om))}
&\leq 
\left\|\BW\left(\int_0^t\nabla\Bv_2\intd s\right)
-\BW\left(\int_0^t\nabla\Bv_1\intd s\right)\right\|_{L_\infty(\BR_+,L_\infty(\Om))} \\
&\quad \cdot\left\|\int_0^t\nabla^2\Bv_2\intd s\right\|_{L_\infty(\BR_+,L_q(\Om))}
\left\|e^{\ga_0 t}\nabla\Bv_2\right\|_{L_p(\BR_+,L_\infty(\Om))} \\
&\leq (p'\ga_0)^{-1/p'}M_1M_2\left(\|\Bv_2\|_{Y_{q,p}^{\ga_0}}+\|\Bv_1\|_{Y_{q,p}^{\ga_0}}\right)\|\Bv_2-\Bv_1\|_{Y_{q,p}^{\ga_0}}.
\end{align*}
Analogously, it holds that for $j=6,7$
\begin{equation*}
\|I_j\|_{L_p(\BR_+,L_q(\Om))}
\leq (p'\ga_0)^{-1/p'}M_1M_3\left(\|\Bv_2\|_{Y_{q,p}^{\ga_0}}+\|\Bv_1\|_{Y_{q,p}^{\ga_0}}\right)\|\Bv_2-\Bv_1\|_{Y_{q,p}^{\ga_0}}.
\end{equation*}
Summing up the above estimates for $I_1,\dots,I_7$, we have achieved 
\begin{align}\label{170527_12}
&\|e^{\ga_ 0 t}(\BF(\Bv_2)-\BF(\Bv_1))\|_{L_p(\BR_+,L_q(\Om))} \\
&\leq \left(4M_2+(p'\ga_0)^{-1/p'}M_1M_2+2(p'\ga_0)^{-1/p'}M_1M_3\right) \notag \\
&\quad \cdot \left(\|\Bv_2\|_{Y_{q,p}^{\ga_0}}+\|\Bv_1\|_{Y_{q,p}^{\ga_0}}\right)
\|\Bv_2-\Bv_1\|_{Y_{q,p}^{\ga_0}}.
\notag
\end{align}
Especially, setting $(\Bv_2,\Bv_1)=(\Bv,0)$ in \eqref{170527_12} yields 
\begin{align}\label{170527_11}
&\|e^{\ga_0 t}\BF(\Bv)\|_{L_p(\BR_+,L_q(\Om))} \\
&\leq \left(4M_2+(p'\ga_0)^{-1/p'}M_1M_2+2(p'\ga_0)^{-1/p'}M_1M_3\right)\|\Bv\|_{Y_{q,p}^{\ga_0}}^2.
\notag
\end{align}

{\bf Estimates of $G(\Bv)$, $\BG(\Bv)$.}
One observes that
\begin{align*}
&e^{\ga_0 t}(G(\Bv_2)-G(\Bv_1)) 
=\left\{\BU_2\left(\int_0^t\nabla\Bv_2 \intd s\right)-\BU_2\left(\int_0^t\nabla\Bv_1 \intd s\right)\right\}:e^{\ga_0 t}\nabla\Bv_2 \\
&\quad + \BU_2\left(\int_0^t\nabla\Bv_1 \intd s\right):e^{\ga_0 t}(\nabla\Bv_2-\nabla\Bv_1) 
=:I_8+I_9.
\end{align*}
By Lemmas \ref{lemm:H1/2}, \ref{lemm:nonl}, we have
\begin{align*}
&\|I_8\|_{H_{q,p}^{1,1/2}(\Om\times\BR_+)} \\
&\leq \left\|\BU_2\left(\int_0^t\nabla\Bv_2 \intd s\right)-\BU_2\left(\int_0^t\nabla\Bv_1 \intd s\right)
\right\|_{B_q(\Om\times\BR_+)}\|e^{\ga_0 t}\nabla\Bv_2\|_{H_{q,p}^{1,1/2}(\Om\times\BR_+)} \\
&\leq M_2\|\Bv_2-\Bv_1\|_{Y_{q,p}^{\ga_0}}\left(\|e^{\ga_0 t}\Bv_2\|_{H_p^{1/2}(\BR_+,W_q^1(\Om))}+\|e^{\ga_0 t}\Bv_2\|_{L_p(\BR_+,W_q^2(\Om))}\right).
\end{align*}
Since it holds by Lemma \ref{lemm:embed} \eqref{lemm:embed_3} that 
\begin{align*}
\|e^{\ga_0 t}\Bv_2\|_{H_p^{1/2}(\BR_+,W_q^1(\Om))}
&\leq M_1\|e^{\ga_0 t}\Bv_2\|_{W_{q,p}^{2,1}(\Om\times\BR_+)} \\
&\leq M_1(\ga_0+1)\|\Bv_2\|_{Y_{q,p}^{\ga_0}},
\end{align*}
the above inequality for $I_8$ yields
%$\|I_8\|_{H_{q,p}^{1,1/2}(\Om\times\BR_+)}$ yields
\begin{equation*}
\|I_8\|_{H_{q,p}^{1,1/2}(\Om\times\BR_+)}
\leq 2M_1M_2(\ga_0+1)\|\Bv_2\|_{Y_{q,p}^{\ga_0}}\|\Bv_2-\Bv_1\|_{Y_{q,p}^{\ga_0}}.
%&\leq 2M_1M_2(\ga_0+1)(\|\Bv_2\|_{Y_{q,p}^{\ga_0}}+\|\Bv_1\|_{Y_{q,p}^{\ga_0}})\|\Bv_2-\Bv_1\|_{Y_{q,p}^{\ga_0}}.
\end{equation*}
Analogously, we have
\begin{equation*}
\|I_9\|_{H_{q,p}^{1,1/2}(\Om\times\BR_+)}
\leq 2M_1M_2(\ga_0+1)\|\Bv_1\|_{Y_{q,p}^{\ga_0}}\|\Bv_2-\Bv_1\|_{Y_{q,p}^{\ga_0}},
\end{equation*}
and therefore
\begin{align}\label{170630_1}
&\|e^{\ga_0 t}(G(\Bv_2)-G(\Bv_1))\|_{H_{q,p}^{1,1/2}(\Om\times\BR_+)} \\
%&\leq\|I_8\|_{H_{q,p}^{1,1/2}(\Om\times\BR_+)}+\|I_9\|_{H_{q,p}^{1,1/2}(\Om\times\BR_+)} \notag \\
&\leq 2M_1M_2(\ga_0+1)\left(\|\Bv_2\|_{Y_{q,p}^{\ga_0}}+\|\Bv_1\|_{Y_{q,p}^{\ga_0}}\right)
\|\Bv_2-\Bv_1\|_{Y_{q,p}^{\ga_0}}.
\notag
\end{align}
In addition, it is clear that $e^{\ga_0 t}(G(\Bv_2)-G(\Bv_1))|_{t=0}=0$ in $L_q(\Om)$,
which, combined with \eqref{chara:1} and \eqref{170630_1}, furnishes 
\begin{equation}\label{170630_2}
e^{\ga_0 t}(G(\Bv_2)-G(\Bv_1))\in {}_0H_{q,p}^{1,1/2}(\Om\times\BR_+).
\end{equation}
Especially, setting $(\Bv_2,\Bv_1)=(\Bv,0)$ in \eqref{170630_2} and \eqref{170630_1}
yields, respectively,
\begin{align}\label{170630_3}
&e^{\ga_0 t}G(\Bv)\in {}_0H_{q,p}^{1,1/2}(\Om\times\BR_+), \\
&\|e^{\ga_0 t}G(\Bv)\|_{H_{q,p}^{1,1/2}(\Om\times\BR_+)}
\leq 2M_1M_2(\ga_0+1)\|\Bv\|_{Y_{q,p}^{\ga_0}}^2.
\notag
\end{align}

Next we show $e^{\ga_0 t}G(\Bv)\in {}_0W_p^1(\BR_+,\wh {W}_{q,\Ga}^{-1}(\Om))$ and its estimate.
For any $\ph\in W_{q',\Ga}^1(\Om)$ with $q'=q/(q-1)$, we have for $k=0,1$
\begin{align*}
(\pa_t^k(G(\Bv_2)-G(\Bv_1)),\ph)_{\Om}&=(\pa_t^k\di(\BG(\Bv_2)-\BG(\Bv_1)),\ph)_{\Om} \\
&=-(\pa_t^k(\BG(\Bv_2)-\BG(\Bv_1)),\nabla\ph)_{\Om},
\end{align*}
where we have used $\BG(\Bv_l)=0$ on $S$ for $l=1,2$.
Thus $\pa_t^k(\BG(\Bv_2)-\BG(\Bv_1))\in\CG(\pa_t^k(G(\Bv_2)-G(\Bv_1)))$ as was discussed in Subsection \ref{subsec:main},
and also
\begin{equation*}
\|\pa_t^k(G(\Bv_2)-G(\Bv_1))\|_{\wh W_{q,\Ga}^{-1}(\Om)}\leq \|\pa_t^k(\BG(\Bv_2)-\BG(\Bv_1))\|_{L_q(\Om)}.
\end{equation*}
This inequality, together with $e^{\ga_0 t}(\BG(\Bv_2)-\BG(\Bv_1))|_{t=0}=0$ in $L_q(\Om)^N$,
implies that
\begin{align}\label{170722_1}
&e^{\ga_0 t}(G(\Bv_2)-G(\Bv_1))|_{t=0}=0 \quad \text{in $\wh W_{q,\Ga}^{-1}(\Om)$,} \\
&\|e^{\ga_0 t}\pa_t^k(G(\Bv_2)-G(\Bv_1))\|_{L_p(\BR_+,\wh W_{q,\Ga}^{-1}(\Om))} \notag \\
& \leq \|e^{\ga_0 t}\pa_t^k(\BG(\Bv_2)-\BG(\Bv_1))\|_{L_p(\BR_+,L_q(\Om))} \quad (k=0,1). \notag
\end{align}

From now on, we estimate the right-hand side of the inequality in \eqref{170722_1}.
To this end, we set
\begin{align*}
&e^{\ga_0 t}(\BG(\Bv_2)-\BG(\Bv_1)) 
=\left(\BU_3\left(\int_0^t\nabla\Bv_2 \intd s\right)-\BU_3\left(\int_0^t\nabla\Bv_1 \intd s\right)\right)e^{\ga_0 t}\Bv_2 \\
& \quad +\BU_3\left(\int_0^t\nabla\Bv_1 \intd s\right)e^{\ga_0 t}(\Bv_2-\Bv_1) =: I_{10}+I_{11}, \\
&e^{\ga_0 t}\pa_t(\BG(\Bv_2)-\BG(\Bv_1)) 
=\left\{\pa_t\left(\BU_3\left(\int_0^t\nabla\Bv_2 \intd s\right)-\BU_3\left(\int_0^t\nabla\Bv_1 \intd s\right)\right)\right\}e^{\ga_0 t}\Bv_2 \\
&\quad + \left(\BU_3\left(\int_0^t\nabla\Bv_2 \intd s\right)-\BU_3\left(\int_0^t\nabla\Bv_1 \intd s\right)\right)e^{\ga_0 t}\pa_t\Bv_2 \\
& \quad +\left\{\pa_t\BU_3\left(\int_0^t\nabla\Bv_1 \intd s\right)\right\}e^{\ga_0 t}(\Bv_2-\Bv_1) \\
& \quad +\BU_3\left(\int_0^t\nabla\Bv_1 \intd s\right)e^{\ga_0 t}\pa_t(\Bv_2-\Bv_1)
=: I_{12}+I_{13}+I_{14}+I_{15}.
\end{align*}
By Lemma \ref{lemm:nonl}, we see that for $j=10,\dots,15$
\begin{equation*}
\|I_j\|_{L_p(\BR_+,L_q(\Om))}
\leq M_2\left(\|\Bv_2\|_{Y_{q,p}^{\ga_0}}+\|\Bv_1\|_{Y_{q,p}^{\ga_0}}\right)
\|\Bv_2-\Bv_1\|_{Y_{q,p}^{\ga_0}}.
\end{equation*}
It thus holds that
\begin{align*}
&\sum_{k=0}^1\|e^{\ga_0 t}\pa_t^k(\BG(\Bv_2)-\BG(\Bv_1))\|_{L_p(\BR_+,L_q(\Om))} \\
&\leq 6M_2\left(\|\Bv_2\|_{Y_{q,p}^{\ga_0}}+\|\Bv_1\|_{Y_{q,p}^{\ga_0}}\right)
\|\Bv_2-\Bv_1\|_{Y_{q,p}^{\ga_0}},
\end{align*}
which, combined with \eqref{170722_1}, furnishes
\begin{align}\label{170722_5}
&e^{\ga_0 t}(G(\Bv_2)-G(\Bv_1))\in {}_0 W_p^1(\BR_+,\wh W_{q,\Ga}^{-1}(\Om)), \\
&\|e^{\ga_0 t}(G(\Bv_2)-G(\Bv_1))\|_{W_p^1(\BR_+,\wh W_{q,\Ga}^{-1}(\Om))} \notag \\
&\leq 6M_2(\ga_0+1)\left(\|\Bv_2\|_{Y_{q,p}^{\ga_0}}+\|\Bv_1\|_{Y_{q,p}^{\ga_0}}\right)
\|\Bv_2-\Bv_1\|_{Y_{q,p}^{\ga_0}}.
\notag
\end{align}
%
%
%\begin{align*}%\label{170630_10}
%&\|e^{\ga_0 t}(\BG(\Bv_2)-\BG(\Bv_1))\|_{W_p^1(\BR_+,L_q(\Om))} 
%\leq \ga_0\|e^{\ga_0 t}(\BG(\Bv_2)-\BG(\Bv_1))\|_{L_p(\BR_+,L_q(\Om))} \\
%&\quad +\sum_{k=0}^1\|e^{\ga_0 t}\pa_t^k(\BG(\Bv_2)-\BG(\Bv_1))\|_{L_p(\BR_+,L_q(\Om))} \notag \\
%&\leq\left( 2\ga_0 M_2+6M_2\right)\left(\|\Bv_2\|_{Y_{q,p}^{\ga_0}}+\|\Bv_1\|_{Y_{q,p}^{\ga_0}}\right)
%\|\Bv_2-\Bv_1\|_{Y_{q,p}^{\ga_0}}, \notag
%\end{align*}
%which implies that
%\begin{align}\label{170630_11}
%&\|e^{\ga_0 t}(G(\Bv_2)-G(\Bv_1))\|_{W_p^1(\BR_+,\wh W_{q,\Ga}^{-1}(\Om))} \\
%&\leq \left(2\ga_0 M_2+6M_2\right)\left(\|\Bv_2\|_{Y_{q,p}^{\ga_0}}+\|\Bv_1\|_{Y_{q,p}^{\ga_0}}\right)
%\|\Bv_2-\Bv_1\|_{Y_{q,p}^{\ga_0}}. \notag
%\end{align}
%
%
Especially, setting $(\Bv_2,\Bv_1)=(\Bv,0)$ in \eqref{170722_5} %and \eqref{170630_10}
yields %, respectively,
\begin{align}\label{170630_12}
&e^{\ga_0 t}G(\Bv)\in {}_0W_p^1(\BR_+,\wh W_{q,\Ga}^{-1}(\Om)), \\
&\|e^{\ga_0 t}G(\Bv)\|_{W_p^1(\BR_+,\wh W_{q,\Ga}^{-1}(\Om))}
\leq 6M_2(\ga_0+1)\|\Bv\|_{Y_{q,p}^{\ga_0}}^2. \notag %\\
%&e^{\ga_0 t}\BG(\Bv) \in {}_0 W_p^1(\BR_+,L_q(\Om))^N, \notag \\
%&\|e^{\ga_0 t}\BG(\Bv)\|_{W_p^1(\BR_+,L_q(\Om))}
%\leq \left(2\ga_0 M_2+6M_2\right)\|\Bv\|_{Y_{q,p}^{\ga_0}}^2. \notag
\end{align}

{\bf Etimates of $\BH(\Bv)$.}
In the same manner as in the case $G(\Bv)$, we have
\begin{align} \label{170528_7}
&e^{\ga_0 t}(\BH(\Bv_2)\Be_N-\BH(\Bv_1)\Be_N),e^{\ga_0 t}\BH(\Bv)\Be_N\in {}_0 H_{q,p}^{1,1/2}(\Om\times\BR_+)^N, \\
&\left\|e^{\ga_0 t}(\BH(\Bv_2)\Be_N-\BH(\Bv_1)\Be_N)\right\|_{H_{q,p}^{1,1/2}(\Om\times\BR_+)} \notag \\
&\leq \left(8M_1M_2(\ga_0+1)+6M_1M_2^2(\ga_0+1)\right) \notag \\
&\quad \cdot\left(\|\Bv_2\|_{Y_{q,p}^{\ga_0}}+\|\Bv_2\|_{Y_{q,p}^{\ga_0}}\right)
\|\Bv_2-\Bv_1\|_{Y_{q,p}^{\ga_0}}, \notag \\
&\left\|e^{\ga_0 t}\BH(\Bv)\Be_N\right\|_{H_{q,p}^{1,1/2}(\Om\times\BR_+)} \notag \\
&\leq \left(8M_1M_2(\ga_0+1)+6M_1M_2^2(\ga_0+1)\right)\|\Bv\|_{Y_{q,p}^{\ga_0}}^2.
\notag
\end{align}

Let $M_4\geq 1$ be a positive constant defined as
\begin{align}\label{170711_1}
M_4&=4M_2+(p'\ga_0)^{-1/p'}M_1M_2+2(p'\ga_0)^{-1/p'}M_1M_3+2M_1M_2(\ga_0+1) \\
&+6M_2(\ga_0+1)+8M_1M_2(\ga_0+1)+6M_1M_2^2(\ga_0+1).
\notag
\end{align}
%\begin{align*}%\label{170711_1}
%M_4&= 4M_2+3(p'\ga_0)^{-1/p'}M_1^2M_2+2M_1M_2(\ga_0+1) \\
%&+2\ga_0 M_2+6M_2+4M_1M_2(\ga_0+1)+6M_1M_2^2(\ga_0+1) \notag
%\end{align*}
Summing up \eqref{170527_12}-\eqref{170630_3} and \eqref{170722_5}-\eqref{170528_7},
%, \eqref{170527_11}, \eqref{170630_1}, \eqref{170630_3},
%\eqref{170630_11}, \eqref{170630_12}, and \eqref{170528_7}, 
we have obtained
\begin{align}\label{170722_7}
&e^{\ga_0 t}(\BF(\Bv_2)-\BF(\Bv_1)), e^{\ga_0 t}\BF(\Bv)
\in L_p(\BR_+,L_q(\Om))^N, \\
&e^{\ga_0 t}(G(\Bv_2)-G(\Bv_1)),  e^{\ga_0 t}G(\Bv)
\in {}_0H_{q,p}^{1,1/2}(\Om\times\BR_+) \cap {}_0W_p^1(\BR_+,\wh W_{q,\Ga}^{-1}(\Om)), \notag \\
&e^{\ga_0 t}(\BH(\Bv_2)\Be_N-\BH(\Bv_1)\Be_N), e^{\ga_0 t}\BH(\Bv)\Be_N
\in {}_0H_{q,p}^{1,1/2}(\Om\times\BR_+)^N,
\notag
\end{align}
together with the estimates:
\begin{align}
&\left\|e^{\ga_0 t}(\BF(\Bv_2)-\BF(\Bv_1))\right\|_{L_p(\BR_+,L_q(\Om))} \label{170528_10} \\
&\quad+\left\|e^{\ga_0 t}(\BH(\Bv_2)\Be_N-\BH(\Bv_1)\Be_N)\right\|_{H_{q,p}^{1,1/2}(\Om\times\BR_+)} \notag  \\
&\quad +\left\|e^{\ga_0 t}(G(\Bv_2)-G(\Bv_1))\right\|_{H_{q,p}^{1,1/2}(\Om\times\BR_+)} \notag  \\
&\quad +\left\|e^{\ga_0 t}(G(\Bv_2)-G(\Bv_1))\right\|_{W_p^1(\BR_+,\wh W_{q,\Ga}^{-1}(\Om))} \notag  \\
&\leq 4M_4\left(\|\Bv_2\|_{Y_{q,p}^{\ga_0}}+\|\Bv_1\|_{Y_{q,p}^{\ga_0}}\right)\|\Bv_2-\Bv_1\|_{Y_{q,p}^{\ga_0}}. \notag \\
&\left\|e^{\ga_0 t}\BF(\Bv)\right\|_{L_p(\BR_+,L_q(\Om))}
+\left\|e^{\ga_0 t}\BH(\Bv)\Be_N\right\|_{H_{q,p}^{1,1/2}(\Om\times\BR_+)} \label{170528_9} \\
&\quad+\left\|e^{\ga_0 t}G(\Bv)\right\|_{H_{q,p}^{1,1/2}(\Om\times\BR_+)}
+ \left\|e^{\ga_0 t}G(\Bv)\right\|_{W_p^1(\BR_+,\wh W_{q,\Ga}^{-1}(\Om)}
\leq 4M_4\|\Bv\|_{Y_{q,p}^{\ga_0}}^2. \notag 
\end{align}

We now  construct a solution to \eqref{170522_1} by using Thorem \ref{theo:main2}.
% and the contraction mapping theorem.
For any $(\Bv,\Fq)\in X_{q,p}^{\ga_0}(\de_0)$ with $0<\de_0<1$,
we have by \eqref{170722_7} a unique solution 
$(\Bu,\Fp)\in W_{q,p}^{2,1}(\Om\times\BR_+)^N\times L_p(\BR_+,W_q^1(\Om))$
to \eqref{170522_1} 
and have by \eqref{170528_9}
\begin{equation*}
\|(\Bu,\Fp)\|_{X_{q,p}^{\ga_0}}
\leq c_0(\|\Ba\|_{B_{q,p}^{2-2/p}(\Om)}+4M_4\|\Bv\|_{Y_{q,p}^{\ga_0}}^2)
\leq c_0(\ep_0+4M_4\de_0^2).
\end{equation*}
One here chooses $\de_0$, $\ep_0$ as follows:
\begin{equation}\label{delep_0}
4c_0 M_4\de_0^2 \leq \frac{\de_0}{4}, \quad c_0\ep_0 \leq \frac{\de_0}{2},
\end{equation}
which enable us to define the operator:
\begin{equation*}
\Phi: X_{q,p}^{\ga_0}(\de_0)\ni(\Bv,\Fq)\mapsto \Phi(\Bv,\Fq)=(\Bu,\Fp)\in X_{q,p}^{\ga_0}(\de_0).
\end{equation*}

Let $(\Bv_i,\Fq_i)\in X_{q,p}^{\ga_0}(\de_0)$ 
and $(\Bu_i,\Fp_i)=\Phi(\Bv_i,\Fq_i)$ for $i=1,2$.
Setting $\bar\Bu=\Bu_2-\Bu_1$ and $\bar\Fp=\Fp_2-\Fp_1$, we observe that
\begin{equation*}
\left\{\begin{aligned}
\pa_t\bar\Bu-\Di\BT(\bar\Bu,\bar\Fp) = \BF(\Bv_2)-\BF(\Bv_1)& && \text{in $\Om$, $t>0$,} \\
\di\bar\Bu = G(\Bv_2)-G(\Bv_1) = \di(\BG(\Bv_2)-\BG(\Bv_1))& && \text{in $\Om$, $t>0$,} \\
\BT(\bar\Bu,\bar\Fp)\Be_N = (\BH(\Bv_2)-\BH(\Bv_1))\Be_N& && \text{on $\Ga$, $t>0$,} \\
\bar\Bu = 0& && \text{on $S$, $t>0$,} \\
\bar\Bu|_{t=0} = 0& && \text{in $\Om$.}
\end{aligned}\right.
\end{equation*}
Together with \eqref{170722_7}, \eqref{170528_10}, and \eqref{delep_0},
one has by Thorem \ref{theo:main2}
\begin{equation*}
\|(\bar\Bu,\bar\Fp)\|_{X_{q,p}^{\ga_0}}
\leq 4c_0M_4(\de_0+\de_0)\|\Bv_2-\Bv_1\|_{Y_{q,p}^{\ga_0}}
\leq\frac{1}{2}\|(\Bv_2,\Fq_2)-(\Bv_1,\Fq_1)\|_{X_{q,p}^{\ga_0}},
\end{equation*}
which implies that $\Phi$ is a contraction mapping on $X_{q,p}^{\ga_0}(\de_0)$.
The contraction mapping theorem thus proves that 
there exists a unique fixed point $(\Bu_*,\Fp_*)$ of $\Phi$ in $X_{q,p}^{\ga_0}$.
Such a $(\Bu_*,\Fp_*)$ is a unique solution to \eqref{eq:11}-\eqref{eq:15} in $X_{q,p}^{\ga_0}(\de_0)$.
This completes the proof of Theorem \ref{theo:main}.

%%%%%%%%%%%%%%%%%%%%%%%%%%%%%%%%%%%%%%%%%%%%%%%%%%%%%%%%%%%%%%%%%%%%%%
\section{Original nonlinear problem}\label{sec:original}
This section is concerned with the global solvability of
the original nonlinear problem \eqref{eq:2}-\eqref{eq:7}.
We first introduce the definition of $L_p\text{-}L_q$ solutions to \eqref{eq:2}-\eqref{eq:7}.
Next we show the global existence and uniqueness of such solutions,
and also their exponential stability.
Let $M_1$, $M_2$, $M_3$, and $M_4$ be the same positive constants as in Section \ref{sec:nonl},
and let $c_0$ be the positive constant given by Theorem \ref{theo:main2}.

\subsection{Definition of $L_p\text{-}L_q$ solutions}
Following \cite{EPS03},
we introduce the definition of $L_p\text{-}L_q$ solutions for \eqref{eq:2}-\eqref{eq:7} in this subsection.

One first recalls the definition of $L_p\text{-}L_q$ solutions for the equations \eqref{eq:11}-\eqref{eq:15}.

\begin{defi}\label{defi:fix}
We call a pair $(\Bu,\Fp)$ an $L_p\text{-}L_q$ solution global in time to \eqref{eq:11}-\eqref{eq:15} if
%\begin{equation*}
$(\Bu,\Fp) \in W_{q,p}^{2,1}(\Om\times\BR_+)^N\times L_p(\BR_+ W_q^1(\Om))$
%\end{equation*}
and if $(\Bu,\Fp)$ satisfies \eqref{eq:11}-\eqref{eq:15} in the $L_p\text{-}L_q$ sense 
for some $1<p,q<\infty$ and $\Ba\in B_{q,p}^{2-2/p}(\Om)^N$.
\end{defi}

\begin{rema}
Due to  \cite{Shibata15},
the maximal $L_p\text{-}L_q$ regularity class means
the function space of $(\Bu,\Fp)$ in Definition \ref{defi:fix}.
\end{rema}

Then we can define $L_p\text{-}L_q$ solutions to \eqref{eq:2}-\eqref{eq:7} as follows:

\begin{defi}\label{defi:free}
We call a triplet $(\Te,\Bv,\pi)$, where for $\Om(t)=\Te(\Om,t)$
\begin{equation*}
\Bv:\bigcup_{t\in(0,\infty)}\left(\Om(t)\times \{t\}\right)\to\BR^N, \quad
\pi:\bigcup_{t\in(0,\infty)}\left(\Om(t)\times \{t\}\right)\to\BR,
\end{equation*}
%\begin{equation*}
%\Bv:\bigcup_{t\in(0,\infty)}\left(\Te(\Om,t)\times \{t\}\right)\to\BR^N, \quad
%\pi:\bigcup_{t\in(0,\infty)}\left(\Te(\Om,t)\times \{t\}\right)\to\BR,
%\end{equation*}
%for $\Om(t)=\Te(\Om,t)$,
an $L_p\text{-}L_q$ solution global in time to \eqref{eq:2}-\eqref{eq:7}
if the following assertions hold true for some $1<p,q<\infty$ and $\Ba\in B_{q,p}^{2-2/p}(\Om)^N:$
\begin{enumerate}[$(1)$]
\item\label{free:1}
%\eqref{eq:2} is uniquely solvable in the classical sense.
$\Te=\Te(\xi,t)$ is a solution to \eqref{eq:2} in the classical sense.
\item\label{free:2}
%$\Te(S,t)=S$ and $\Te(\cdot,t)$ is bijective from $\overline\Om$ onto $\Te(\overline{\Om},t)$ %for each $t>0$ and
%for each $t>0$.
$\Te(\cdot,t)$ is a $C^1$-diffeomorphism from $\Om$ onto $\Om(t)$ for each $t>0$.
%\item\label{free:2}
%$\Te(\cdot,t)$ is a $C^1$-diffeomorphism from $\overline\Om$ onto $\overline{\Om(t)}$
%for each $t>0$, where $\Om(t)=\Te(\Om,t)$.
\item\label{free:3}
$(\Bu,\Fp)=(\Bv\circ\Te,\pi\circ\Te)$ is an $L_p\text{-}L_q$ solution global in time 
to \eqref{eq:11}-\eqref{eq:15}.
%\item\label{free:4}
%There is a positive constant $L$ such that 
%$|\Bv(x,t)-\Bv(y,t)|\leq L|x-y|$
%for any $x,y\in\Om(t)$ and any $t>0$.
\end{enumerate}
\end{defi}

%\begin{rema}
%Definition \ref{defi:free} \eqref{free:2} implies that $\Te(\cdot,t)$
%is bijective from $\overline{\Om}$ onto $\overline{\Om(t)}$.
%\end{rema}

%\begin{rema}
%If Definition \ref{defi:free} \eqref{free:2} holds, then
%every mapping of \eqref{eq:1} is diffeomorphism for each $t>0$.
%\end{rema}

%%%%%%%%%%%%%%%%%%%%%%%%%%%%%%%%%%%%%%%%%%%%%%%%%%%%%%%%%%%%%%%%%%%%%%
\subsection{Global solvability and exponential stability}

We here prove

\begin{theo}\label{theo:ex}
Let $p$, $q$ satisfy \eqref{pq}.
Suppose that $\ep_0$ is the same positive number as in Theorem $\ref{theo:main}$
and that $\Ba\in D_{q,p}(\Om)$ with $\|\Ba\|_{D_{q,p}(\Om)}\leq\ep_0$.
Then there exists an $L_p\text{-}L_q$ solution $(\Te,\Bv,\pi)$ global in time to \eqref{eq:2}-\eqref{eq:7}, which is unique.
In addition, 
$\|\Bv(t)\|_{L_q(\Om(t))}=O(e^{-\ga_0 t})$ as $t\to\infty$,
where $\ga_0$ is the same positive constant as in Theorem $\ref{theo:main}$. 
%For the solution $(\Bu,\Fp)$ of \eqref{eq:11}-\eqref{eq:15} obtained in Theorem $\ref{theo:main}$,
%we set
%\begin{equation*}
%\Te(\xi,t) = \xi+\int_0^t\Bu(\xi,s)\intd s\quad
%(\xi\in\Om\cup\Ga\cup S, t\geq 0).
%\end{equation*}
%Then the following assertions hold:
%\begin{enumerate}[$(1)$]
%\item\label{theo:ori1}
%For each $t\geq0$,
%\begin{equation*}
%\Te(\cdot,t):\Om\to\Om(t), \quad
%\Te(\cdot,t):\Ga\to\Ga(t), \quad
%\Te(\cdot,t):S\to S
%\end{equation*}
%are $C^1$-diffeomorphisms.
%In addition, $\Te-\xi \in C^1([0,\infty),BUC^1(\overline{\Om}))$.
%\item\label{theo:ori2}
%Let $\Bv=\Bv(x,t)=\Bu(\Te^{-1}(x,t),t)$ and $\Fq=\Fq(x,t)=\Fp(\Te^{-1}(x,t),t)$
%for $x\in\Om(t)$ and $t > 0$.
%Then $(\Te,\Bv,\Fp)$ is a solution to the equations \eqref{eq:1}-\eqref{eq:7}.
%\end{enumerate}
\end{theo}

\begin{proof}
By Theorem $\ref{theo:main}$, we have an $L_p\text{-}L_q$ solution $(\Bu,\Fp)$ to \eqref{eq:11}-\eqref{eq:15}. 
%It then holds by \eqref{**} that $\Bu\in BUC([0,\infty),BUC^1(\overline{\Om}))$ with
%\begin{equation}\label{170628_9}
%\|\Bu\|_{BUC([0,\infty),BUC^1(\overline{\Om}))} \leq \frac{1}{2}.
%\end{equation}
Let us define for $t>0$
\begin{equation*}
\Te(\xi,t)=\xi+\int_0^t\Bu(\xi,s)\intd s \quad (\xi\in\Om), \quad \Om(t)=\Te(\Om,t),
\end{equation*}
where we note that $\Bu\in BUC([0,\infty),BUC^1(\overline{\Om}))$ by Lemma \ref{lemm:embed} \eqref{lemm:embed_2}.
%which implies $\Te(S,t)=S$. %In particular, $\Te(\overline{\Om},t)\neq\emptyset$.
%\begin{align*}
%&\Te(\xi,t)=\xi+\int_0^t\Bu(\xi,s)\intd s \quad (\xi\in\overline\Om), \\
%&\Om(t)=\{x\in\BR^N \mid x=\Te(\xi,t), \xi\in\Om\} \quad (t>0).
%\end{align*}

{\bf Step 1.} 
In this step,
we prove the existence of $L_p\text{-}L_q$ solutions global in time to the equations \eqref{eq:2}-\eqref{eq:7}.

Let $x_1,x_2\in\Om(t)$ with $x_1=x_2$ for
\begin{equation*}
x_i=\xi_i+\int_0^t\Bu(\xi_i,s)\intd s \quad (i=1,2),   %=\xi_2+\int_0^t\Bu(\xi_2,s)\intd s=x_2
\end{equation*}
where $\xi_1,\xi_2\in\Om$. Since it holds that by Lemma \ref{lemm:embed} \eqref{lemm:embed_1}, \eqref{170711_1}, and \eqref{delep_0} 
\begin{align}\label{170723_1}
&\int_0^t\|\nabla\Bu(\cdot,s)\|_{BUC(\overline{\Om})}\intd s
\leq\left(\int_0^\infty e^{-p'\ga_0s}\intd s\right)^{1/p'}\|e^{\ga_0 t}\nabla \Bu\|_{L_p(\BR_+,BUC(\overline{\Om}))} \\
&\leq (p'\ga_0)^{-1/p'}M_1\|e^{\ga_0 t}\nabla\Bu\|_{L_p(\BR_+,W_q^1(\Om))} \notag \\
&\leq (p'\ga_0)^{-1/p'}M_1\de_0 \leq (p'\ga_0)^{-1/p'}M_1M_2\de_0 \leq 4c_0M_4\de_0 \leq \frac{1}{4} \notag
\end{align}
for any $t>0$, one observes that %by \eqref{170628_9}
\begin{align*}
0&=|x_1-x_2|\geq |\xi_1-\xi_2|-\int_0^t|\Bu(\xi_1,s)-\Bu(\xi_2,s)|\intd s \\
&\geq |\xi_1-\xi_2|-|\xi_1-\xi_2|\int_0^t\|\nabla\Bu(\cdot,s)\|_{BUC(\overline{\Om})}\intd s \\
&\geq \frac{3}{4}|\xi_1-\xi_2|.
\end{align*}
This inequality implies $\xi_1=\xi_2$, and thus
$\Te(\cdot,t)$ is bijective from $\Om$ onto $\Om(t)$ for each $t>0$.
We denote inverse mapping of $\Te(\cdot,t)$  by $\Te^{-1}(\cdot,t)$ in what follows.
Here $\Te(\cdot,t):\Om\to\Om(t)$ is a $C^1$ function,
so that we see by the inverse function theorem that $\Te^{-1}(\cdot,t):\Om(t)\to\Om$ is also a $C^1$ function.  
%Similarly, $\Te^{-1}(x,t)$ is a $C^1$ function with respect to $t$.
Hence, $\Te(\cdot,t)$ satisfies Definition \ref{defi:free} \eqref{free:2},
while it is clear that
\begin{equation}\label{170710_13}
\Bv(x,t)=\Bu(\Te^{-1}(x,t),t), \quad \pi(x,t)=\Fp(\Te^{-1}(x,t),t) \quad 
(x\in\Om(t), t>0)
\end{equation}
satisfies Definition \ref{defi:free} \eqref{free:3} and $\Te$ is a solution to \eqref{eq:2}.

%Finally, we prove that $\Te$ is a unique solution to \eqref{eq:2}. 
%For simplicity, we set $\Te_t(\xi)=\Te(\xi,t)$ and $\Te_t^{-1}(x)=\Te^{-1}(x,t)$ in the following.
%Since $(\Te_t^{-1}\circ\Te_t)(\xi)=\xi$, we have $\nabla_x\Te_t^{-1}\nabla_\xi\Te_t=\BI$.
%Hence,
%\begin{equation*}
%\nabla_x\Te_t^{-1}=(\nabla_\xi\Te_t)^{-1}=\left(\BI+\int_0^t\nabla_\xi\Bu(\xi,s)\intd s\right)^{-1},
%\end{equation*}
%which implies that
%\begin{equation*}
%\|\nabla\Te_t^{-1}\|_{BUC([0,\infty),BUC(\overline{\Om}))}\leq 2.
%\end{equation*}
%This inequality enable us to do that
%\begin{align*}
%\|\Te^{-1}_t(x_1)-\Te_t^{-1}(x_2)\|_{BUC(\overline{\Om})}\leq 2|x_1-x_2|,
%\end{align*}
%which implies that $\Te^{-1}(x,t)$ is Lipschitz continuous with respect to $x\in\Om(t)$.
%
%
%Let $\wt\Te$ be another solution, and then
%\begin{align*}
%\|\Te-\wt\Te\|_{BUC(\overline{\Om})}
%&\leq\int_0^t\|\Bv(\Te_s,t)-\Bv(\wt\Te_s,s)\|_{BUC(\overline{\Om})}\intd s \\
%&\leq \int_0^t \|\Te_s-\wt\Te_s\|_{BUC(\overline{\Om})}\intd s,
%\end{align*}
%which, combined with Gronwall's inequality furnishes that $\Te=\wt\Te$.

{\bf Step 2.}
Let $\BJ_\Te$ be the Jacobian matrix of $\Te$. 
Since $\BJ_\Te=\BI+\int_0^t\nabla\Bu(\xi,s)\intd s$, one has 
$|\BJ_\Te|\leq c_1$ by \eqref{170723_1}
for some positive constant $c_1$.
It then holds by \eqref{170710_13} that
\begin{align*}
\|\Bv(t)\|_{L_q(\Om(t))}
&= \left(\int_\Om|\Bv(\Te(\xi,t),t)|^q|\BJ_\Te|\intd\xi\right)^{1/q}  \\
&=\left(\int_\Om|\Bu(\xi,t)|^q|\BJ_\Te|\intd \xi\right)^{1/q} 
\leq (c_1)^{1/q}\|\Bu(t)\|_{L_q(\Om)} \\
&\leq (c_1)^{1/q}e^{-\ga_0 t}\|e^{\ga_0 t}\Bu\|_{BUC([0,\infty),L_q(\Om))} \\
&\leq (c_1)^{1/q}e^{-\ga_0 t}\|e^{\ga_0 t}\Bu\|_{BUC([0,\infty),B_{q,p}^{2-2/p}(\Om))}.
\end{align*}
This inequality, together with \eqref{est:main} and \eqref{170615_1},
furnishes the exponential stability of the solution $\Bv$.

%The asymptotic behavior follows immediately, so that this completes the proof of Theorem \ref{theo:ex}.

{\bf Step 3.} We prove the uniqueness of solutions in this step,
%\eqref{170627_1}.
%It is easy to show that the mapping $\Te(\cdot,t):\overline{\Om}\to \overline{\Om(t)}$ is injective,
%so that we prove that it is surjective.
%Let $x\in \overline{\Om(t)}$.
%Then there is $\{x_j\}_{j=1}^\infty\subset \Om(t)$ so that $\lim_{j\to\infty}|x_j\to x|=0$.
%One thus observe that 
%there is $\{\xi_j\}_{j=1}^\infty\subset\Om$ such that $x_j=\Te(\xi_j,t)$, which implies that
%\begin{equation*}
%|x_j-x_k|\geq |\xi_j-\xi_k|-\frac{1}{2}|\xi_j-\xi_k|=\frac{1}{2}|\xi_j-\xi_k|.
%\end{equation*}
%Thus $\{\xi_j\}_{j=1}^\infty\subset\Om$ is a Cauchy's sequence, which furnishes that
%there is $\xi\in\overline{\Om}$ so that $\lim_{j\to\infty}|\xi_j-\xi|=0$.
%Thus $x=\Te(\xi,t)$, which implies that the mapping is surjective.
%Hence $\Te(\cdot,t):\overline{\Om}\to \overline{\Om(t)}$ is bijective.
%It is easy to show that the mapping is $C^1$-diffeomorphism by the inverse function theorem.
Let $i=1,2$ and
$(\Te_i,\Bv_i,\pi_i)$ be solutions to \eqref{eq:2}-\eqref{eq:7} with $\Ba\in D_{q,p}(\Om)$ satisfying $\|\Ba\|_{D_{q,p}(\Om)}\leq \ep_0$.
%Additionally we assume that
%$\Bv$ is Lipschitz continuous with respect to $x$ and continuous with respect to $t$
%and that
%\begin{equation*}
%\Bv_i(\Te_i(\xi,t),t) \in H_{q,p}^{2,1}(\Om\times\BR_+)^N.
%\end{equation*}
Then $(\Bu_i,\Fp_i)=(\Bv_i\circ\Te_i,\pi_i\circ\Te_i)$
are $L_p\text{-}L_q$ solutions global in time to \eqref{eq:11}-\eqref{eq:15}.
By Theorem \ref{theo:main}, we see that $\Bu_1=\Bu_2$ and $\Fp_1=\Fp_2$.
One integrates \eqref{eq:2} with respect to time $t$ in order to obtain
\begin{align*}
\Te_1(\xi,t)&=\xi+\int_0^t\Bv_1(\Te_1(\xi,s),s)\intd s 
= \xi+\int_0^t\Bu_1(\xi,s)\intd s \\
&=\xi+\int_0^t\Bu_2(\xi,s)\intd s=\xi+\int_0^t\Bv_2(\Te_2(\xi,s),s)\intd s=\Te_2(\xi,t).
\end{align*}
Furthermore, for $t>0$ and $x\in \Te_1(\Om,t)=\Te_2(\Om,t)$, we observe that
%one observes that 
\begin{equation*}
\Bv_1(x,t)=\Bv_1(\Te_1(\xi,t),t)=\Bu_1(\xi,t)=\Bu_2(\xi,t)=\Bv_2(\Te_2(\xi,t),t)=\Bv_2(x,t)
\end{equation*}
and that $\pi_1(x,t)=\pi_2(x,t)$.
This completes the proof of Theorem \ref{theo:ex}.
%we have
%\begin{equation*}
%\Te_i(\xi,t)=\xi +\int_0^t \Bv_i(\Te_i(\xi,s),s) \intd s \quad \text{for $t>0$.}
%\end{equation*} 
%Let $\Bu_i(\xi,t)=\Bv_i(\Te_i(\xi,t),t)$.
%This implies that $\Bu_i$ satisfies the transformed problem and $\|\Bu_i\|_X<R_0$,
%which furnishes that $\Bu_1=\Bu_2$.
%Thus $\Te_1=\Te_2$. This also furnishes that
%\begin{equation*}
%\Bv_1(x,t)=\Bu_1(\Te_1^{-1}(x,t))=\Bu_2(\Te_2^{-1}(x,t))=\Bv(x,t).
%\end{equation*}
%Similarly, we have $\Fq_1=\Fq_2$.
\end{proof}

\begin{rema}
One can prove similarly further properties of the solution $(\Te,\Bv,\pi)$ as follows:
\begin{enumerate}[(1)]
%\item
%It seems that the condition $2/p+N/q<1$ is essential in order to show Definition \ref{defi:free} \eqref{free:1}.
\item
Let $\Ga(t)=\Te(\Ga,t)$ for $t>0$.
Then $\Te(\cdot,t)$ is a $C^1$-diffeomorphism from $\Ga$ onto $\Ga(t)$ for each $t>0$. 
In addition, it holds that $\Te(\overline{\Om},t)=\overline{\Om(t)}$. % and that,
\item
Let $\wt\Te(\xi,t)=(\Te(\xi,t),t)$ for $(\xi,t)\in\Om\times\BR_+$.
Then $\wt\Te$ is a $C^1$-diffeomorphism from $\Om\times\BR_+$ onto $\bigcup_{t\in(0,\infty)}(\Om(t)\times \{t\})$.
\item
$\|\nabla\Bv(t)\|_{L_q(\Om(t))}=O(e^{-\ga_0 t})$ as $t\to \infty$.
\end{enumerate}
\end{rema}

%%%%%%%%%%%%%%%%%%%%%%%%%%%%%%%%%%%%%%%%%%%%%%%%%%%%%%%%%%%%%%%%%%%%%%
\medskip
\noindent{\bf Acknowledgments.}
This research was partly supported by JSPS Grant-in-aid for Young Scientists (B) \#17K14224,
JSPS Japanese-German Graduate Externship at Waseda University,
and Waseda University Grant for Special Research Projects (Project number: 2017K-176).

%%%%%%%%%%%%%%%%%%%%%%%%%%%%%%%%%%%%%%%%%%%%%%%%%%%%%%%%%%%%%%%%%%%%%%
\def\thesection{A}
\renewcommand{\theequation}{A.\arabic{equation}}
\section{}
Following \cite[Appendix]{SS07},
we derive the equations \eqref{eq:11}-\eqref{eq:13} in this appendix.
To this end, we assume that the equations \eqref{eq:2}-\eqref{eq:7} have
a sufficiently regular solution $(\Te,\Bv,\pi)$ in the following argumentation.
Let $x=\Te(\xi,t)$ with \eqref{trans}, and recall that  
$\Bu(\xi,t)=\Bv(\Te(\xi,t),t)=\Bv(x,t)$ and $\Fp(\xi,t)=\pi(\Te(\xi,t),t)=\pi(x,t)$.
If we write $\BM=(M_{i j})$, then $\BM$ is an $N \times N$ matrix
whose $(i,j)$-component is $M_{i j}$.
In addition, $\de_{i j}$ denotes Kronecker's delta defined by the formula:
$\de_{i j}=1$ when $i=j$ and $\de_{i j}=0$ when $i \neq j$.

{\bf Case \eqref{eq:11}.}
It is clear that
\begin{equation}\label{150303_1}
\frac{\pa}{\pa t}\Bu(\xi,t)
= \frac{\pa}{\pa t}\Bv(x,t) + \sum_{j=1}^N\frac{\pa x_j}{\pa t}\frac{\pa}{\pa x_j}\Bv(x,t)
= \frac{\pa}{\pa t}\Bv + (\Bv\cdot\nabla)\Bv.
\end{equation}

We here set
\begin{equation*}
\BA = \left(\frac{\pa x_i}{\pa \xi_j}\right) = \BI + \BB, \quad
\BB = (B_{i j}), \quad
B_{i j} = \int_0^t \frac{\pa u_i}{\pa \xi_j}(\xi,s)\intd s.
\end{equation*}
Let $\CA$ be the cofactor matrix of $\BA$, i.e. $\BA^{-1}=(\det\BA)^{-1}\CA$.
Then $\CA=\BA^{-1}$ because $\det\BA=1$ (cf. e.g. \cite[page 271]{SS07}),
and also one observes by $\de_{ij}=\pa\xi_i/\pa\xi_j=\sum_{k=1}^N \frac{\pa\xi_i}{\pa x_k}\frac{\pa x_k}{\pa\xi_j}$ that
%by
%\begin{equation}\label{170520_1}
%\det\BA=1
%\end{equation}
%(cf. e.g. \cite[page 271]{SS07}), and also
\begin{equation*}
\CA = \BA^{-1}= 
\begin{pmatrix}
\frac{\pa\xi_1}{\pa x_1} & \dots & \frac{\pa \xi_1}{\pa x_N} \\
\vdots & \ddots & \vdots \\
\frac{\pa\xi_N}{\pa x_1} & \dots & \frac{\pa \xi_N}{\pa x_N}
\end{pmatrix}.
\end{equation*}
%\begin{equation*}
%\de_{i j} = \frac{\pa\xi_i}{\pa\xi_j} = \sum_{k=1}^N \frac{\pa\xi_i}{\pa x_k}\frac{\pa x_k}{\pa\xi_j}.
%\end{equation*}
In addition,
\begin{equation*}
\CA = \BI + \CB \quad
\text{for some $N\times N$ matrix $\CB=(\CB_{ij})=\CB\left(\int_0^t\nabla\Bu(\xi,s)\intd s\right)$,}
\end{equation*}
where $\CB:\BR^{N\times N}\to\BR^{N\times N}$ with $\CB(\BO)=\BO$
and the $(i,j)$-component $\CB_{i j}(\BX)$ of $\CB(\BX)$, $\BX=(X_{kl})$,
are polynomials with respect to $X_{kl}$ for $k,l=1,\dots,N$.

Now it holds, by direct calculations and by \cite[page 771]{DS95}, that
\begin{align}\label{150303_2}
&\nabla_x = \CA^\SST\nabla_\xi, \quad
\di_x = (\CA^\SST:\nabla_\xi\,\cdot\,)=\di_\xi(\CA\,\cdot\,), \\
%\di_\xi +\CB^\SST:\nabla_\xi= \notag \\
&\nabla_x\di_x = \CA^\SST \nabla_\xi \di_\xi+\CA^\SST\nabla_\xi(\CB^\SST:\nabla_\xi\,\cdot\,), \notag
\end{align}
where the subscripts $x$ and $\xi$ denote their variables, and also
\begin{align}\label{150304_10}
&\De_x = \di_x\nabla_x = (\di_\xi + \CB^\SST:\nabla_\xi\,\cdot\,)(\BI+\CB^\SST)\nabla_\xi  \\
&= \De_\xi +\di_\xi\CB^\SST\nabla_\xi + \CB^\SST:\nabla_\xi\nabla_\xi
+ \CB^\SST:\nabla_\xi\CB^\SST\nabla_\xi \notag \\
&= \De_\xi + \sum_{i,j,k=1}^N\CB_{ji}\left(2\de_{ik}+\CB_{ki}\right)\frac{\pa^2}{\pa\xi_j \pa\xi_k} %\notag \\
+\sum_{i,j,k=1}^N\left(\de_{ij}+\CB_{ji}\right)\left(\frac{\pa \CB_{ki}}{\pa \xi_j}\right)\frac{\pa}{\pa\xi_k}. \notag
\end{align}
Then we can write
\begin{align}\label{150303_3}
&\sum_{i,j,k=1}^N\CB_{ji}\left(2\de_{ik}+\CB_{ki}\right)\frac{\pa^2}{\pa\xi_j \pa\xi_k}
= \sum_{i,j,k=1}^N \CV_{ijk}\left(\int_0^t\nabla\Bu(\xi,s)\intd s\right)\frac{\pa^2}{\pa\xi_j \pa\xi_k}, \\
&\sum_{i,j,k=1}^N\left(\de_{ij}+\CB_{ji}\right)\left(\frac{\pa\CB_{ki}}{\pa\xi_j}\right)\frac{\pa}{\pa\xi_k} \notag \\
&=\sum_{i,j,k=1}^N \left[\CW_{ijk}\left(\int_0^t\nabla\Bu(\xi,s)\intd s\right)
\int_0^t\nabla^2\Bu(\xi,s)\intd s\right] \frac{\pa}{\pa\xi_k} \notag
\end{align}
by $\CV_{ijk}(\cdot):\BR \to\BR $ with $\CV_{ijk}(\BO)=0$
and by $\CW_{ijk}(\cdot):\BR^{N^3}\to\BR$. % with
%\begin{equation*}
%\CW_{ijk}(\BO)\int_0^t\nabla^2\Bu(\xi,s)\intd s =
%\sum_{l,m,n=1}^N a_{ijklmn}\int_0^t\frac{\pa^2 u_l}{\pa\xi_m\pa\xi_n}(\xi,s)\intd s
%\left\{\begin{aligned}
%&0 && (N\geq 3), \\
%%&\int_0^t \frac{\pa^2}{\pa \xi_i\pa\xi_j} u_k(\xi,s) \intd s && (N=2).
%&\de_{ij}(-1)^{k+i}\int_0^t \frac{\pa^2 u_{1+\de_{1i}}}{\pa\xi_j\pa\xi_{1+\de_{1k}}}(\xi,s)\intd s && (N=2).
%\end{aligned}\right.
%\end{equation*}
%for suitable coefficients $a_{ijklmn}$.
Note that both $\CV_{ijk}(\BX)$ and $\CW_{ijk}(\BX)$, $\BX=(X_{lm})$,
are polynomials with respect to $X_{lm}$ for $l,m=1,\dots,N$.
Since $\Di\BT(\Bv,\pi)=\mu(\De\Bv+\nabla\di\Bv)-\nabla\pi$,
we insert \eqref{150303_1} and \eqref{150303_2}-\eqref{150303_3} into \eqref{eq:3}
in order to obtain
\begin{align*}
&\pa_t\Bu-\mu\left(\De\Bu+\CA^\SST\nabla\di\Bu+\CA^\SST\nabla(\CB^\SST:\nabla\Bu)\right)+\CA^\SST\nabla\Fp \\
&=\mu\left(\sum_{i,j,k=1}^N\CV_{ijk}\left(\int_0^t\nabla\Bu(\xi,s)\intd s\right)\frac{\pa^2}{\pa\xi_j \pa\xi_k}\Bu \right. \\
&\left.+ \sum_{i,j,k=1}^N\left[\CW_{ijk}\left(\int_0^t\nabla\Bu(\xi,s)\intd s\right)
\int_0^t\nabla^2\Bu(\xi,s)\intd s\right] \frac{\pa }{\pa\xi_k}\Bu\right).
\end{align*}
Let $\CA^{-\SST}=(\CA^\SST)^{-1}=(\CA^{-1})^\SST$,
and multiply the last equation by $\CA^{-\SST}=(\BI+\BB)^\SST$ from the left-hand side. 
Thus,
\begin{align*}
&\pa_t\Bu -\mu(\De\Bu+\nabla\di\Bu)+\nabla\Fp
=- \BB^\SST\pa_t\Bu + \mu\BB^\SST\De\Bu + \mu\nabla(\CB^\SST:\nabla\Bu) \\
&\quad+\mu\left(\BI+\BB^\SST\right)
\sum_{i,j,k=1}^N\CV_{ijk}\left(\int_0^t\nabla\Bu(\xi,s)\intd s\right)\frac{\pa^2}{\pa\xi_j \pa \xi_k}\Bu \\
&\quad+\mu\left(\BI+\BB^\SST\right)
\sum_{i,j,k=1}^N\left[\CW_{ijk}\left(\int_0^t\nabla\Bu(\xi,s)\intd s\right)
\int_0^t\nabla^2\Bu(\xi,s)\intd s\right]\frac{\pa }{\pa\xi_k}\Bu,
\end{align*}
which completes Case \eqref{eq:11}.

{\bf Case \eqref{eq:12}.}
The equation \eqref{eq:12} follows from \eqref{eq:4} and the second relation of \eqref{150303_2} immediately.
This completes Case \eqref{eq:12}.
%Let $\ph(x)\in C_0^\infty(\Om(t))$ for $t \geq 0$ and $\psi(\xi,t)=\ph(\Te(\xi,t))$.
%Then,
%\begin{align*}
%&(\di\Bv,\ph)_{\Om(t)} = -(\Bv,\nabla\ph)_{\Om(t)}
%= -\int_{\Om}\Bv(\Te(\xi,t),t) \cdot \CA^\SST\nabla\psi(\xi,t) \intd\xi \\
%& = \int_{\Om}\di(\CA\Bu(\xi,t)) \psi(\xi,t) \intd\xi
%=  (\di(\CA\Bu),\ph)_{\Om(t)},
%\end{align*}
%because $\det\BA = \det \BA^{-1}=1$.
%This implies that $\di\Bu = -\di(\CB\Bu)$.
%In addition, since $\Di \CA^\SST=(0,\dots,0)^\SST$ by \cite[page 272]{SS08},
%we observe that
%\begin{equation*}
%\di(\CB\Bu) = \Di\CB^\SST\cdot\Bu+\CB^\SST:\nabla\Bu = \CB^\SST:\nabla\Bu.
%\end{equation*}
%Note that this relation holds true for general $\Bu$.
%Thus,
%\begin{equation*}
%\di\Bu = -\di(\CB\Bu)=-\nabla\CB^\SST:\nabla\Bu,
%\end{equation*}
%which completes Case \eqref{eq:12}. 

{\bf Case \eqref{eq:13}.}
Let $F(\xi)=\xi_N-d$, and then $\Ga=\{\xi\in\ws \mid F(\xi)=0\}$.
We can regard $\xi\in\Ga$ as $\xi=\xi(x,t)=\Te^{-1}(x,t)$ for $x\in \Ga(t)$,
where $\Te^{-1}(\cdot,t)$ is the inverse mapping of $\Te(\cdot,t)$.
Let $G(x,t)=F(\xi(x,t))$.
Since $\Ga(t)$ is defined by $G(x,t)=0$, 
we see that
the unit outward normal $\Bn$ to $\Ga(t)$ is given by
\begin{equation}\label{150304_1}
\Bn = \frac{\nabla G(x,t)}{|\nabla G(x,t)|}.
\end{equation}

Now it holds that
\begin{equation*}
\nabla G(x,t)
=\left(\sum_{j=1}^N\frac{\pa\xi_j}{\pa x_1}\frac{\pa F}{\pa \xi_j},\dots,
\sum_{j=1}^N\frac{\pa\xi_j}{\pa x_N}\frac{\pa F}{\pa\xi_j}\right)^\SST
=\CA^\SST\Be_N,
\end{equation*}
and thus we have by \eqref{150304_1} the relation:
\begin{equation}\label{150304_2}
\Bn = \frac{\CA^\SST\Be_N}{|\CA^\SST\Be_N|}.
\end{equation}
On the other hand, we see by \eqref{150303_2} that
\begin{equation*}
\BT(\Bv,\pi) = -\Fp\BI + \mu\left(\CA^\SST\nabla\Bu + (\nabla\Bu)^\SST\CA\right),
\end{equation*}
which, combined with \eqref{150304_2} and inserted into \eqref{eq:13}, furnishes %that
\begin{equation*}
-\Fp\frac{\CA^T\Be_N}{|\CA^T\Be_N|}
+ \mu \left(\CA^T\nabla\Bu + (\nabla\Bu)^\SST\CA\right)\frac{\CA^\SST\Be_N}{|\CA^\SST\Be_N|} = 0.
\end{equation*}
The last equation is multiplied by $|\CA^\SST\Be_N|\CA^{-\SST}$ from the left-hand side to give
\begin{equation*}
-\Fp\Be_N + \mu \left(\nabla\Bu + \CA^{-\SST}(\nabla\Bu)^T\CA\right)\CA^\SST\Be_N = 0,
\end{equation*}
which, combined with $\CA^{-\SST}=\BI+\BB^T$ and $\CA=\BI+\CB$, implies %that
\begin{equation*}
\BT(\Bu,\Fp)\Be_N = -\mu \left( \BD(\Bu)\CB^\SST 
+ \left\{(\nabla\Bu)^\SST\CB+\BB^\SST(\nabla\Bu)^\SST(\BI+\CB)\right\}(\BI+\CB^\SST)\right)\Be_N.
\end{equation*}
This completes Case \eqref{eq:13}.

%%%%%%%%%%%%%%%%%%%%%%%%%%%%%%%%%%%%%%%%%%%%%%%%%%%%%%%%%%%%%%%%%%%%%%
%\def\thesection{B}
%\renewcommand{\theequation}{B.\arabic{equation}}
%\section{}
%In this appendix, we prove
%\begin{equation*}
%[L_{p,\si_0}(\BR,X),H_{p,\si_0}^{1/2}(\BR,X)]_{1/2}=H_{p,\si_0}^{1/2}(\BR,X).
%\end{equation*}

%%%%%%%%%%%%%%%%%%%%%%%%%%%%%%%%%%%%%%%%%%%%%%%%%%%%%%%%%%%%%%%%%%%%%%
%%  Bibliography
%%%%%%%%%%%%%%%%%%%%%%%%%%%%%%%%%%%%%%%%%%%%%%%%%%%%%%%%%%%%%%%%%%%%%%
%\bibliographystyle{plain}
%\bibliography{170220references}

\end{document}